\documentclass[sn-mathphys,Numbered]{sn-jnl}

\usepackage{amsthm, mathtools,bbm,xcolor,soul}
\usepackage{graphicx}
\usepackage{subcaption}
\usepackage{hyperref}

\hypersetup{
    colorlinks,
    citecolor=black,
    filecolor=black,
    linkcolor=blue,
    urlcolor=black
}

\usepackage{graphicx}%
\usepackage{multirow}%
\usepackage{amsmath,amssymb,amsfonts}%
\usepackage{amsthm}%
\usepackage{mathrsfs}%
\usepackage[title]{appendix}%
\usepackage{xcolor}%
\usepackage{textcomp}%
\usepackage{manyfoot}%
\usepackage{booktabs}%
\usepackage{algorithm}%
\usepackage{algorithmicx}%
\usepackage{algpseudocode}%
\usepackage{listings}%
\usepackage{comment}
\usepackage{tikz-cd}
\usetikzlibrary{shapes.geometric}
\theoremstyle{definition}
\newtheorem*{defn*}{Definition}

\newtheorem*{rem*}{Remark}


\newtheorem{thm}{Theorem}[section]

\theoremstyle{definition}

\newtheorem{rem}[thm]{Remark}

\numberwithin{equation}{section}

\newtheorem{cor}[thm]{Corollary}

\newtheorem*{cor*}{Corollary}

\newtheorem{prop}[thm]{Proposition}

\newtheorem*{prop*}{Proposition}


\newtheorem{lem}[thm]{Lemma}

\newtheorem*{lem*}{Lemma}


\newcommand{\R}{\mathbb{R}}

\newcommand{\eR}{\overline{\mathbb{R}}}


\newcommand{\cZ}{\mathcal{Z}}

\newcommand{\cMtildestar}{\widetilde{\cM}_*}


\newcommand{\cQ}{\mathcal{Q}}


\newcommand{\N}{\mathbb{N}}


\newcommand{\cI}{\mathcal{I}}

\newcommand{\cTstar}{\mathcal{T}_*}

\newcommand{\cA}{\mathcal{A}}



\newcommand{\cX}{\mathcal{X}}

\newcommand{\E}{\mathbb{E}}





\def\P{\mathbb{P}}


\newcommand{\D}{\mathbf{D}}

\newcommand{\bC}{\mathbf{C}}

\newcommand{\cM}{\mathcal{M}}

\newcommand{\bM}{\mathbb{M}}

\newcommand{\0}{\mathbf{0}}
                
\newcommand{\SP}{\mathcal{S}}



\newcommand{\B}{\mathbb{B}}

\newcommand{\eqdist}{\stackrel{\text{d}}{=}}
\newcommand{\convdist}{\stackrel{\text{d}}{\longrightarrow}}

\begin{document}

\title[Article Title]{The Stationary Behavior of Reflecting Coupled Brownian Motions with Applications to 
Shortest Remaining Processing Time Queues}


\author*[1]{\fnm{Sixian} \sur{Jin}}\email{sjin@csusm.edu}

\author*[1]{\fnm{Marvin} \sur{Pena}}\email{pena051@csusm.edu}

\author*[1]{\fnm{Amber L.} \sur{ Puha}}\email{apuha@csusm.edu}


\affil[1]{\orgdiv{Department of Mathematics}, \orgname{California State University San Marcos}, \orgaddress{\street{333 S.\ Twin Oaks Valley Road}, \city{San Marcos}, \postcode{92096}, \state{CA}, \country{USA}}}




\abstract{
With the objective of characterizing the stationary behavior of the scaling limit for shortest remaining processing time (SRPT) queues with a heavy-tailed processing time distribution, as obtained in Banerjee, Budhiraja, and Puha (BBP, 2022), we study reflecting coupled Brownian motions (RCBM) $(W_t(a), a, t \geq 0)$. These RCBM arise by regulating coupled Brownian motions (CBM)
$(\chi_t(a), a,t \geq 0)$ to remain nonnegative. Here, for $t\geq 0$, $\chi_t(0)=0$ and
$\chi_t(a):=w(a)+\sigma B_t-\mu(a)t$ for $a>0$, $w(\cdot)$ is a suitable initial condition, $\sigma$ is a positive constant, $B$ is a standard Brownian motion, and $\mu(\cdot)$ is an unbounded, positive, strictly decreasing drift function. In the context of the BBP (2022) scaling limit, the drift function is determined by the model parameters, and, for each $a\geq 0$, $W_{\cdot}(a)$ represents the scaling limit of the amount of work in the system of size $a$ or less. Thus, for the BBP (2022) scaling limit, the time $t$ values of the RCBM describe the random distribution of the size of the remaining work in the system at time $t$.
Our principal results characterize the stationary distribution of the RCBM in terms of a maximum process $M_*(\cdot)$ associated with CBM starting from zero. We obtain an explicit representation for the finite-dimensional distributions of $M_*(\cdot)$ and a simple formula for its covariance. We further show that the RCBM converge in distribution to $M_*(\cdot)$ as time $t$ approaches infinity. From this, we deduce the stationary behavior of the BBP (2022) scaling limit, including obtaining an integral expression for the stationary queue length in terms of the associated maximum process. While its distribution appears somewhat complex, we compute the mean and variance explicitly, and we connect with the work of Lin, Wierman, and Zwart (2011) to offer an illustration of Little’s Law.
}

\keywords {Coupled Brownian Motions, Shortest Remaining Processing Time Queue, Regularly Varying Tails, Diffusion Approximation, Stationary Distribution, Nonstandard Scaling, Measure Valued State Process.\\
{\bf Acknowledgement:} Research supported in part by NSF Grant DMS-2054505.}



\maketitle

\section{Introduction}\label{sec:Intro}

Let $B$ be a real valued standard Brownian motion, $\sigma$ be a finite positive constant and $\mu: (0,\infty)\to \mathbb (0,\infty)$ be a strictly decreasing, continuous function with $\lim_{a\to 0^+}\mu(a)=\infty$ and $\mu(\infty):=\lim_{a\to \infty}\mu(a)$.  Then $\mu(\infty)\ge 0$.
For $t\ge 0$, let
\begin{equation}\label{def:X}
X_t(a) = 
\begin{cases}
0,&\hbox{for }a=0,\\
\sigma B_t-\mu(a)t,&\hbox{for }a\in(0,\infty].
\end{cases}
\end{equation}
Then $((X_t(a), t\geq 0 ),a\in(0,\infty])$ are {\it coupled Brownian motions} (CBM) starting from zero, finite positive standard deviation $\sigma$, and continuous, strictly increasing negative drift that is bounded above by zero.
The drift diverges to minus infinity as $a$ decreases to zero, and so $X_\cdot(0)$ is set to be identically zero for convenience.  We refer to $\mu(\cdot)$ as the {\it drift function}.
For each $a\in[0,\infty]$, let
\begin{equation}\label{def:M*}
M_*(a):=
\sup_{t\geq 0}X_t(a).
\end{equation}
Then $M_*(0)=0$. Also, 
for each $a\in(0,\infty)$, $M_*(a)<\infty$ almost surely due to the negative drift of $X_\cdot(a)$ \cite{Harrison}.
In addition, $M_*(\infty)<\infty $ if and only if $\mu(\infty)>0$.
Here we study the distributional properties of the {\it maximum process} $(M_*(\cdot),M_*(\infty))$ and an associated finite nonnegative Borel measure $\cM_*$ that is defined below in \eqref{def:cM*0}, \eqref{def:cM*} and \eqref{def:cM*TotMass}.

The primary motivation for studying these stems from the connection between the maximum process $(M_*(\cdot),M_*(\infty))$ and the associated Borel measure $\cM_*$ and the measure valued scaling limit obtained in \cite{heavy tails} for shortest remaining processing time (SRPT) queues with heavy tailed processing time distributions.  The SRPT queue is a non-idling single server system that prioritizes preemptively serving the job with the shortest remaining service first.  SRPT queues are naturally of interest as they minimize the number of jobs in system over a large class of service disciplines \cite{Old Optimal, New Optimal}.  However, the size biased priority service scheme and preemptive nature of SPRT elicit complex dynamics that evade closed form analysis.  Hence, its scaling limits are of interest as tractable approximations.  There is an extensive literature that establishes such limits in a variety of settings, e.g., \cite{Atar, heavy tails, Down, Down_Sig, Gromoll, light tails, Lin, Puha}.   We describe the SRPT queue model, heavy tailed assumption, nature of the scaling, scaling limits and related results in more detail in Section \ref{sec:SRPT}.

The scaling limit obtained in \cite{heavy tails} is characterized in terms of a nonnegative space-time random field $(\widetilde{W}_t(a), a,t\geq 0)$ such that for each fixed $t\geq 0$, $\widetilde{W}_t(0)=0$, $\widetilde{W}_t:[0,\infty)\mapsto[0,\infty)$ is nondecreasing and $\widetilde{W}_t(\infty):=\lim_{a\to\infty}\widetilde{W}_t(a)<\infty$.  As detailed in Section \ref{sec:ht},
this random field is specified through a reflection mapping of CBM with standard deviation $\tilde\sigma$ and drift function $\widetilde{\mu}(\cdot)$ that are determined by the SPRT queue model parameters and with initial values that are finite, but not necessarily equal to zero.
Here we define a space-time random field $(W_t(a), a, t\geq 0)$ by applying the same reflection mapping to CBM with initial conditions that are also finite, but not necessary equal to zero (see \eqref{def:W0}), which allows us to consider a larger class of drift functions than those that arise in \cite{heavy tails}.  Our first main result states that the stochastic process $W_t(\cdot)$ converges in distribution to the stochastic process $M_*(\cdot)$ as $t\to\infty$ for any finite initial condition as defined in Section \ref{sec:DiffModel} (see Theorem \ref{thm:WStat} (i)).  In addition, if $\mu(\infty)>0$, then 
$(W_t(\cdot),W_t(\infty))$ converges in distribution to $(M_*(\cdot),M_*(\infty))$ as $t\to\infty$ (see Theorem \ref{thm:WStat} (i)).  By taking the standard deviation and drift function to be the ones associated with the SRPT queue scaling limit in \cite{heavy tails},
it follows that $(\widetilde{W}_t(\cdot),\widetilde{W}_t(\infty))$ converges in distribution to $(\widetilde{M}_*(\cdot),\widetilde{M}_*(\infty))$  as $t\to\infty$ for any finite initial condition, where $(\widetilde{M}_*(\cdot),\widetilde{M}_*(\infty))$ is defined as in \eqref{def:M*} with $\sigma=\tilde\sigma$ and $\mu(\cdot)=\widetilde{\mu}(\cdot)$ (see \eqref{def:SRPTmu} and \eqref{def:tildeM*}).  In particular, the distribution of $(\widetilde{M}_*(\cdot),\widetilde{M}_*(\infty))$ is the unique stationary distribution for the SRPT queue scaling limit $((\widetilde{W}_t(\cdot),\widetilde{W}_t(\infty)),t\ge 0)$.  See also Corollary \ref{cor:WStat}.

The one-dimensional distributions of $(M_*(\cdot),M_*(\infty))$ are known to be exponentially distributed.  Specifically, for each $a\in(0,\infty]$ such that $\mu(a)>0$, $M_*(a)$ is exponential distributed with rate parameter $2\mu(a)/\sigma^2$ (see \eqref{eq:M*(a)cdf}).  Another main result of this paper is Theorem \ref{thm:ZStat}, where a formula for the finite dimensional distributions of $(M_*(\cdot),M_*(\infty))$ is specified.  This formula expresses the $n$-dimensional distributions in terms of {the distributions of the maximums of two related Brownian motions, each with time varying drift.
Another way to view the result in Theorem \ref{thm:ZStat} is that it expresses the $n$-dimensional distributions in terms of certain $(n-1)$-dimensional distributions leading to a dimension reduction.  Using this formula with $n=2$ (see Theorem \ref{thm:ZStat_2d}) and some extensive calculations, we are able to find an explicit expression for the two point covariance function of $(M_*(\cdot),M_*(\infty))$ (see Corollary \ref{cor:MCov}).  Using the covariance expression in Corollary \ref{cor:MCov} and the known marginal distributions, we obtain a simple formula for the correlation.  Namely,  for $0< a_1\leq a_2\leq\infty$ such that $\mu(a_2)>0$,
\begin{equation}\label{eq:Corr}
\text{Corr}(M_*(a_1),M_*(a_2))=\frac{\mu(a_2)}{\mu(a_1)}\left(2-\frac{\mu(a_2)}{\mu(a_1)}\right),
\end{equation}
which is a quadratic function of the ratio of the drifts that is positive. The correlation tends to zero as $a_1$ decreases to zero for all fixed }$a_2\in(0,\infty]$ such that $\mu(a_2)>0$. Also, if $\mu(\infty)=0$, it tends to zero as $a_2$ tends to infinity for all fixed $a_1\in(0,\infty)$.  In Corollary \ref{cor:tildeMCov}, the covariance result is specialized to the SPRT scaling limit obtained in \cite{heavy tails} showing its dependence on the model parameters and the tail decay rate  in particular.

It is common for the SRPT queue state descriptor to be taken as the finite, nonnegative, Borel measure on the nonnegative real numbers that has a unit atom at the remaining processing time of each job in system at each time $t\geq 0$ (see \eqref{def:mvsd}).
This was done in \cite{heavy tails} and a scaling limit $\widetilde{\cZ}_{\cdot}$ for this process was also obtained (see \cite[Theorem 3]{heavy tails} and Section \ref{sec:ht} here).
This motivates our study of a nonnegative Borel measure $\cM_*$ that is associated with maximum process $M_*(\cdot)$. For this, we assume that
\begin{equation}\label{ass:mu}
\int_0^\infty \frac{1}{a^2\mu(a)} da<\infty.
\end{equation}
Then $\cM_*$ is defined as follows:
\begin{eqnarray}
\langle 1_{\{0\}},\cM_*\rangle&=&0,\label{def:cM*0}\\
\langle 1_{[0,a]},\cM_*\rangle&=&\int_0^a\frac{M_*(x)}{x^2}dx+\frac{M_*(a)}{a},\qquad\hbox{for }a\in(0,\infty),\label{def:cM*}\\
Z_*:=\langle 1_{[0,\infty)},\cM_*\rangle&=&\int_0^\infty\frac{M_*(x)}{x^2}dx.\label{def:cM*TotMass}
\end{eqnarray}
Here, for a Borel measurable set $A\subseteq[0,\infty)$,  $\langle 1_A,\cM_*\rangle$ denotes the measure of set $A$ under $\cM_*$. The integrals in \eqref{def:cM*} and \eqref{def:cM*TotMass} are well defined since $M_*(\cdot)$ is nonnegative and
continuous on $[0,\infty)$, although that $\cM_*$ is a finite nonnegative Borel measure requires verification.  This is done in Lemma \ref{lem:M*}.

Motivated by the application to SPRT queues with heavy tailed processing time distribution for each $t\geq 0$, we define the finite nonnegative Borel measure $\cZ_t$ by letting $\langle 1_{\{0\}},\cZ_t\rangle=0$ and using $W_t(\cdot)$ in place of $M_*(\cdot)$ in \eqref{def:cM*} and \eqref{def:cM*TotMass} to define $\langle 1_{[0,a]},\cZ_t\rangle$ for $a\in(0,\infty)$ and $Z_t:=\langle 1_{[0,\infty)},\cZ_t\rangle$ respectively
(see  \eqref{def:cZ0}, \eqref{def:cZ} and  \eqref{def:Zt}).  In Theorem \ref{thm:WStat} (ii), we establish convergence in distribution of $\cZ_t$ to $\cM_*$ as $t\to\infty$.  In addition in Theorem \ref{thm:WStat2}, we establish that moments of $W_t(a)$ converge to those for $M_*(a)$ as $t\to\infty$ for each $a\in[0,\infty]$ such that $\mu(a)>0$ and that, under suitable conditions on the drift function $\mu(\cdot)$, moments of $Z_t$ converge to those for $Z_*$ as $t\to\infty$. We recover convergence to stationarity properties of the SRPT queue measure valued scaling limit $\widetilde{\cZ}_{\cdot}$ obtained in \cite{heavy tails} as special cases of Theorems \ref{thm:WStat} and \ref{thm:WStat2} (see Corollaries \ref{cor:WStat} and \ref{cor:WStat2}).

A quantity of particular interest for the SPRT queueing model is the total queue length.
For the scaling limit arising in \cite{heavy tails}, this corresponds to $\widetilde{Z}_t:=\langle 1_{[0,\infty)},\widetilde{\cZ}_t\rangle$
for $t\geq 0$.  While the expression that is obtained in Corollary \ref{cor:WStat} for the stationary queue length, denoted by $\widetilde{Z}_*$, is explicit, i.e.,
$$
\widetilde{Z}_*:=\int_0^\infty\frac{\widetilde{M}_*(x)}{x^2}dx,
$$
its distribution appears to be complex due to the appearance of the maximum process $\widetilde{M}_*(\cdot)$ in the integrand.  By specializing the result in Theorem \ref{thm:ZStat_2d} to the SRPT queue setting and leveraging some extensive computations, we are able to compute explicit formulas for the mean and variance of $\widetilde{Z}_*$ (see Corollary  \ref{cor:tildeZstar}).  One conclusion from these calculations is that the mean does not equal the standard deviation and so $\widetilde{Z}_*$ is not exponentially distributed as it often happens for other single server queues.  Remark \ref{rem:tildeZstar} provides some additional observations regarding these moment formulas.  Finally, we relate our expression for the expected value of the stationary limiting queue length with the heavy traffic limit obtained in \cite{Lin} for suitably scaled mean stationary response times.  Under some mild conditions, we recover an illustration of Little's Law (see Remark \ref{rem:Lin} and \eqref{eq:Lin}).
 
The paper proceeds as follows.
In the next section, we introduce some of the basic notation to be used throughout the paper.  In Section \ref{sec:DiffModel}, we introduce what we refer to as reflecting coupled Brownian motion (RCBM), the model for which we determine the stationary behavior.  As described above, this is a slightly generalized version of the scaling limit that arises in \cite{heavy tails}.  The main results for RCBM are stated in Section \ref{sec:main}.  The two convergence results, Theorems \ref{thm:WStat} and Theorem \ref{thm:WStat2}, are stated in Section \ref{sec:Converge}.  Then, in Section \ref{sec:M*}, we state our results that characterize the finite dimensional distributions of the maximum process.  First, we do this for the two-dimensional distributions in Theorem \ref{thm:ZStat_2d}, which is leveraged to obtain the covariance formula in Corollary \ref{cor:MCov}.
Then, Theorem \ref{thm:ZStat} provides the formula for the $n$-dimensional distributions for all $n\geq 2$. In Section \ref{sec:SRPT}, we describe the SRPT queue model in detail, provide some background on scaling limits and describe the scaling and scaling limits that arise in the heavy tailed setting in \cite{heavy tails}.  Then, we apply our results to the scaling limits in \cite{heavy tails} to obtain and analyze its stationary behavior in Corollaries \ref{cor:WStat}, \ref{cor:tildeMCov}, \ref{cor:tildeZstar} and \ref{cor:WStat2} and Remarks \ref{rem:tildeZstar} and \ref{rem:Lin}. The proofs are given in Section \ref{sec:proofs}.

\subsection{Notation}\label{sec:notation}
We let  
$\N$ denote the set of positive integers,
$\R$ denote the set of real numbers, $\R_+$ denote the set of nonnegative real numbers and
$\overline\R_+:=\R_+\cup\{\infty\}$.
Given $a,b\in\R$, we let $a\wedge b$ (resp.\ $a\vee b$) denote the minimum (resp.\ maximum) of $a$ and $b$.
For $f:\R_+ \to \R$ and $T\in[0,\infty)$, we let $\|f\|_T := \sup_{0\leq t\leq T}\lvert f(t)\rvert$ and $\|f\|_\infty := \sup_{0\leq t< \infty}\lvert f(t)\rvert$.  We let $\bC(\R_+)$ denote the set of real-valued functions on $\R_+$ that are continuous and $\bC_b(\R_+)$ consist of those members of $\bC(\R_+)$ that are bounded.
For a Borel measurable subset $A$ of $\R_+$, the indicator function $1_A$ of $A$ is given by $1_A(\omega)=1$ if  $\omega\in A$ and $1_A(\omega)=0$ otherwise.
When $A=\R_+$, we denote $1_A$ by $1$.

In this paragraph, we fix an arbitrary Polish space $\SP$, i.e., a separable completely metrizable topological space.
For a collection $(Y_t,t\geq 0)$ of $\SP$-valued random elements and an $\SP$-valued random element $Y$, we write $Y_t\convdist Y$ as $t\to\infty$ to denote that $Y_t$ converges in distribution to $Y$ as $t\to\infty$.  When it may be helpful to clarify the Polish space $\SP$ in which the convergence takes place we write $Y_t\convdist Y$ as $t\to\infty$ in $\SP$.  Frequently, $\SP$ will be a product of finitely many Polish spaces endowed with the product topology.
A function $f:[0,\infty)\to\SP$ is an rcll function if it is right continuous and has left limits in $\SP$.
We let $\D([0,\infty),\SP)$ be the set of rcll functions of time taking values in $\SP$. We endow $\D([0,\infty),\SP)$ with Skorokhod $J_1$-topology.  With this topology, $\D([0,\infty),\SP)$ is also a Polish space.  All $\SP$-valued stochastic processes considered throughout are rcll with probability one.  We let $\bC([0,\infty),\SP)$ be the set of $f\in \D([0,\infty),\SP)$ that are continuous.  On $\bC([0,\infty),\SP)$, the Skorokhod $J_1$-topology and topology of uniform on compact sets are equivalent.

The one-dimensional Skorokhod map $\Psi$, which we define here, plays a significant role in our analysis. For this, we let $\D_0([0,\infty),\R)$ denote the space of all $f\in\D([0,\infty),\R)$ with $f(0)\geq0$. For $f\in \D_0([0,\infty),\R)$, $\inf_{0\leq s\leq t}f(s)$ is well defined for each $t\geq0$, and we define 
\begin{equation}\label{skorokhod}
\Psi[f](t):=f(t)-\inf_{0\leq s\leq t}f(s)\wedge 0,\quad t\geq0.
\end{equation}
Then $\Psi[f](\cdot)\in\D_0([0,\infty),\R_+)$. We make note that for $f,g \in \D_0([0,\infty),\R)$ and $T\geq 0$,
\begin{equation}\label{eq:Lip}
\left\Vert \Psi[f] - \Psi[g]\right\Vert_T\le 2\left\Vert f- g\right\Vert_T.
\end{equation}
Further details about the Skorokhod map can be found in \cite[Section 13.5]{Whitt}.

We let $\bM$ be the set of finite nonnegative Borel measures on $\R_+$.
Given $\zeta\in\bM$ and a Borel measurable function $g:\R_+\to\R$ that is integrable with respect to $\zeta$, we let $
\langle g,\zeta\rangle :=\int_{\R_+} g(x)\,\zeta(dx)$.
The set $\bM$ is endowed with the topology of weak convergence such that $\zeta_n\xrightarrow{w}\zeta$ as $n\to\infty$ if and only if $\lim_{n\to\infty}\langle g,\zeta_n\rangle=\langle g,\zeta\rangle$ for any $g\in\bC_b(\R_+)$. With this topology, $\bM$ is a Polish space \cite{Prokhorov}. In addition, if $\langle 1_{\{b\}},\zeta\rangle=0$ for all $b\in\R_+$, then $\zeta_n\xrightarrow{w}\zeta$ as $n\to\infty$ if and only if $\lim_{n\to\infty}\langle 1_{[0,b]},\zeta_n\rangle=\langle 1_{[0,b]},\zeta\rangle$ for all $b\in\R_+$ and $\lim_{n\to\infty}\langle 1,\zeta_n\rangle=\langle 1,\zeta\rangle$.   We let $\0$ denote the zero measure in $\bM$ and $\0(\cdot)$ denote the element of $\D([0,\infty),\bM)$ that is identically equal to $\0$.

\section{Reflecting Coupled Brownian Motions}\label{sec:DiffModel}

Let $\cI$ denote the set of continuous, nondecreasing stochastic processes
$w:\R_+\to\R_+$ that are independent of $B$ and satisfy that $w(0)=0$,  
\begin{equation}\label{def:W0}
\mathbb{E}\left[\int_0^1\frac{w(a)}{a^2}da\right]<\infty,
\end{equation}
and $w(\infty):=\lim_{a\to\infty}w(a)$ is such that $\E[w(\infty)]<\infty$. 
Then $\cI$ is the set of initial conditions for our model. 
Due to \eqref{def:W0}, for each $w\in\cI$, we have
\begin{equation}\label{eq:Z0}
	\int_0^\infty\frac{w(a)}{a^2}da<\infty,\qquad\hbox{almost surely.}
\end{equation}
As suggested by \eqref{eq:Z0}, we refer to $\cI$ as the finite initial conditions.
Note that the function $0(\cdot)$ that is identically equal to zero is an
element of $\cI$, and we will refer to this function as the zero initial condition.

Recall the definition of $B$, $\sigma$, $\mu(\cdot)$ and the CBM $((X_t(\cdot),X_t(\infty)), t\geq 0)$ starting from zero from \eqref{def:X}.
Let $w\in\cI$.  For $t\ge 0$ and $a\in\overline\R_+$, let
\begin{equation}\label{def:Wt}
\chi_t(a) :=  w(a)+X_t(a)\qquad\hbox{and}\qquad
W_t (a):=\Psi[\chi(a)](t).
\end{equation}
We refer to $((W_t(\cdot),W_t(\infty)),t\geq 0)$ as the {\it reflecting coupled Brownian motions} (RCBM) with standard deviation $\sigma$,
drift function $\mu(\cdot)$ and initial condition $w$.  When it might not be clear from context, we write 
$((W_t^w(\cdot),W_t^w(\infty)), t\geq 0)$ to clarify that the initial condition is $w$. 
In particular, $((W_t^0(\cdot),W_t^0(\infty)), t\geq 0)$ corresponds to a system with zero initial condition.
In Section \ref{sec:SRPT}, we explain how RCBM relate to
the heavy traffic scaling limit for a sequence of SRPT queues with a heavy tailed processing time distribution under a distribution dependent scaling (also see \cite[Theorem 1]{heavy tails}).

For each fixed $t\geq 0$, since $\chi_t:(0,\infty)\mapsto\mathbb{R}$ is continuous, strictly increasing and
$\lim_{a\to\infty}\chi_t(a)=\chi_t(\infty)$, it follows that $W_t:(0,\infty)\mapsto \R_+$ is nonnegative, continuous and nondecreasing and $\lim_{a\to\infty}W_t(a)=W_t(\infty)$.  In addition, $W_t(0)=0$ for all $t\geq 0$, $\lim_{a\to 0^+} W_0(a)=\lim_{a\to 0^+} w(a)=0$, and, for each $t>0$, $\lim_{a\to 0^+} \chi_t(a)=-\infty$ and so $\lim_{a\to 0^+} W_t(a)=0$.  Thus, for each fixed $t\geq 0$, $W_t(0)=0$, $W_t:[0,\infty)\mapsto \R_+$ is nonnegative, continuous and nondecreasing and $\lim_{a\to\infty}W_t(a)=W_t(\infty)$.  In fact, $(W_t(\cdot), t\geq 0)$ is a continuous time-space random field.

Assume that \eqref{ass:mu} holds and let $\cZ$ be the $\bM$ valued stochastic process that satisfies the following: for each $t\ge 0$,
\begin{eqnarray}
\langle 1_{\{0\}},\cZ_t\rangle&=&0 \label{def:cZ0} \\
\langle 1_{[0,a]},\cZ_t\rangle&=&\int_0^a\frac{W_t(x)}{x^2}dx+\frac{W_t(a)}{a},\qquad\hbox{for }a\in(0,\infty),\label{def:cZ}\\
Z_t:=\langle 1,\cZ_t\rangle&=&\int_0^\infty\frac{W_t(x)}{x^2}dx.\label{def:Zt}
\end{eqnarray}
We refer to $(\cZ_t,t\ge 0)$ as the {\it RCBM measure valued process}.
We remark that \eqref{eq:Z0} implies that $Z_0<\infty$ almost surely,
but it is not apriori clear that $Z_t<\infty$ for all $t>0$ and that $\cZ_t\in\bM$ for all $t\ge 0$.   This is established in Theorem \ref{thm:WStat} below.  When it might not otherwise be clear from context, we write $\cZ^w$ and $Z^w$ to clarify that the initial condition is $w$. 
In Section \ref{sec:SRPT}, we explain how the RCBM measure valued process $\cZ$ relates to a heavy traffic scaling limit for the measure valued state descriptors for a sequence of SRPT queues with a heavy tailed processing time distribution under distribution dependent scaling (also see \cite[Theorem 3]{heavy tails}).

Before proceeding to the main results of the paper that concern the stationary behavior of the RCBM and the RCBM measure valued process, we observe that they are point-recurrent at $0$ and $\0$ respectively.
\bigskip
\begin{prop}
\label{thm:hitszero} Let $w\in\cI$. Then
$
\mathbb{P}(\forall\ T\geq 0, \sup_{a\in\R_+}W_t^w(a)=0\hbox{ some }t\geq T)=1$.
If \eqref{ass:mu} also holds, then 
$\mathbb{P}(\forall\ T\geq 0,\ \cZ_t^w=\0\hbox{ some }t\geq T)=1$.
\end{prop}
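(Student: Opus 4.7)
My plan is to handle both statements simultaneously by reducing them to a single one-dimensional fact about $W_t^w(\infty)$. Since $a\mapsto W_t^w(a)$ is nonnegative and nondecreasing with limit $W_t^w(\infty)$ (as recorded in the paragraph following \eqref{def:Wt}), I have $\sup_{a\in\R_+} W_t^w(a)=W_t^w(\infty)$, and if this quantity equals zero then $W_t^w(a)=0$ for every $a\in[0,\infty]$, so \eqref{def:cZ0}, \eqref{def:cZ} and \eqref{def:Zt} force $\cZ_t^w=\0$. Both claims therefore follow once I show that, almost surely, the set $\{t\geq 0: W_t^w(\infty)=0\}$ is unbounded.

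By \eqref{def:Wt}, $W_t^w(\infty)=\Psi[\chi^w(\infty)](t)$ with $\chi_t^w(\infty)=w(\infty)+\sigma B_t-\mu(\infty)t$. Since $w\in\cI$ is independent of $B$ with $w(\infty)<\infty$ almost surely, after conditioning on $\{w(\infty)=y_0\}$ the process $\chi_\cdot^w(\infty)$ becomes a Brownian motion with drift $-\mu(\infty)\leq 0$ starting at the fixed value $y_0\in\R_+$. Writing $I_t:=\inf_{0\leq s\leq t}\chi_s^w(\infty)$, the Skorokhod formula \eqref{skorokhod} yields $W_t^w(\infty)=\chi_t^w(\infty)-(I_t\wedge 0)$, and this vanishes precisely when $\chi_t^w(\infty)\leq 0$ and $\chi_t^w(\infty)=I_t$.

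To produce arbitrarily large such times, I would invoke standard facts about Brownian motion: when $\mu(\infty)>0$, $\chi_t^w(\infty)\to-\infty$ almost surely, and when $\mu(\infty)=0$, $\liminf_{t\to\infty}\chi_t^w(\infty)=-\infty$ almost surely by recurrence. Either way, fixing a sequence $c_n\downarrow-\infty$, the first-hitting times $\tau_n:=\inf\{t\geq 0:\chi_t^w(\infty)=c_n\}$ are almost surely finite, satisfy $\chi_{\tau_n}^w(\infty)=c_n=I_{\tau_n}$ by continuity of $\chi_\cdot^w(\infty)$ and the definition of $\tau_n$, and thus make $W_{\tau_n}^w(\infty)=0$; moreover $\tau_n\to\infty$, since otherwise $\chi_\cdot^w(\infty)$ would be unbounded below on a compact interval, contradicting its continuity. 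The only mildly subtle step is the monotone reduction to $a=\infty$; once that is in hand, the recurrence statement is a routine consequence of the Skorokhod representation and the behavior of Brownian motion with nonpositive drift.
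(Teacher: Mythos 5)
Your proposal is correct and follows essentially the same route as the paper: reduce to the $a=\infty$ coordinate via monotonicity of $a\mapsto W_t^w(a)$, note that $W_t^w(\infty)=0$ exactly when $\chi_t^w(\infty)$ equals its (nonpositive) running infimum, and conclude from $\liminf_{t\to\infty}\chi_t^w(\infty)=-\infty$ for Brownian motion with nonpositive drift. Your explicit construction of the hitting times $\tau_n$ of levels $c_n\downarrow-\infty$ just fills in the detail the paper leaves implicit.
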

\begin{proof} It suffices to prove the proposition for a deterministic initial state $w\in\cI$, which we assume henceforth. For $t\geq 0$, observe that
$\sup_{a\in\R_+}W_t^w(a)=W_t^w(\infty)=0$ if and only if $\chi_t(\infty)=\inf_{0\leq s\leq t} \chi_s(\infty)$.
The first result follows since $\chi_\cdot(\infty)$ is a Brownian motion with initial value $w(\infty)$ and nonpositive drift $-\mu(\infty)$, which is continuous and satisfies $\liminf_{t\to\infty}\chi_t(\infty)=-\infty$.
If \eqref{ass:mu} also holds, then since $\cZ_t^w=\0$ if and only if 
$\sup_{a\in\R_+}W_t^w(a)=W_t^w(\infty)=0$, the second result follows from the first.
\end{proof}

\section{Main Results}\label{sec:main}

In this section, we state our main results concerning the stationary behavior of the RCBM $((W_t^w(\cdot),W_t^w(\infty)),t\geq 0)$ and the RCBM measure valued process $(\cZ_t^w, t\geq 0)$ for $w\in\cI$.

\subsection{Convergence to the Maximum Process}\label{sec:Converge}

Here, we provide conditions under which the RCBM $((W_t^w(\cdot),W_t^w(\infty)),t\geq 0)$ and the RCBM measure valued process $(\cZ_t^w, t\geq 0)$ for $w\in\cI$ converge in distribution to the maximal process $(M_*(\cdot),M_*(\infty))$ and associated finite nonnegative Borel measure $\cM_*$ defined in \eqref{def:cM*0}, \eqref{def:cM*} and \eqref{def:cM*TotMass} respectively, as $t\to\infty$.

\bigskip
\begin{thm}\label{thm:WStat} Suppose $w\in\cI$.
\begin{itemize}
\item[(i)] As $t\to\infty$, $W_t^w(\cdot)\convdist M_*(\cdot)$ in $\bC(\mathbb{R}_+)$. 
In addition, if $\mu(\infty)>0$, then, as $t\to\infty$,
$(W_t^w(\cdot),W_t^w(\infty))\convdist(M_*(\cdot),M_*(\infty))$ in $\bC(\mathbb{R}_+)\times \mathbb{R}_+$.
\item[(ii)] Suppose that \eqref{ass:mu} also holds.
Then $\cZ_t^w\in\bM$ for all $t\geq 0$ and $\cM_*\in\bM$. Moreover, as $t\to\infty$,
 $(W_t^w(\cdot),\cZ_t^w)\convdist (M_*(\cdot),\cM_*)$ in $\bC(\mathbb{R}_+)\times \bM$.
In addition, if $\mu(\infty)>0$, then, as $t\to\infty$,
 $
 (W_t^w(\cdot),W_t^w(\infty),\cZ_t^w)\convdist (M_*(\cdot),M_*(\infty),\cM_*)$  in \ $\bC(\mathbb{R}_+)\times \mathbb{R}_+\times \bM$.
 \end{itemize}
\end{thm}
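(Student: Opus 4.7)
The strategy is to apply a Brownian motion time reversal to obtain a distributional representation of $W_T(\cdot)$ that, up to equality in law, reduces the claimed convergence to an almost-sure convergence for the supremum process of a single underlying CBM. Fix $T > 0$ and set $\hat B^T_u := B_T - B_{T-u}$ for $u \in [0, T]$; this is a standard Brownian motion measurable with respect to $(B_s)_{s \le T}$ and hence independent of $w$. Starting from the identity $\Psi[f](t) = \max\{f(t),\ f(t) - \inf_{0 \le s \le t} f(s)\}$ and substituting $u = T - s$ in the supremum, one obtains, for each $a \in [0, \infty]$,
\[
W_T(a) = \max\!\Big\{w(a) + \sigma B_T - \mu(a) T,\ \sup_{0 \le u \le T}\big(\sigma \hat B^T_u - \mu(a) u\big)\Big\}.
\]
Since $\hat B^T \stackrel{d}{=} B$ and is independent of $w$, this yields the joint-in-$a$ distributional equality $W_T(\cdot) \stackrel{d}{=} Y_T(\cdot) := \max\{w(\cdot) + \sigma \tilde B_T - \mu(\cdot) T,\ \tilde M_T(\cdot)\}$, where $\tilde B$ is a Brownian motion independent of $w$ and $\tilde M_T(a) := \sup_{0 \le u \le T}(\sigma \tilde B_u - \mu(a)u)$. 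Writing $\tilde M_*(a) := \sup_{u \ge 0}(\sigma \tilde B_u - \mu(a)u) \stackrel{d}{=} M_*(a)$, the plan is to prove $Y_T \to \tilde M_*$ almost surely in $\bC(\R_+)$ and then, under \eqref{ass:mu}, upgrade this to joint convergence of the associated measures.

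For part (i), on any compact $[a_1, a_2] \subset (0, \infty)$ with $\underline\mu := \mu(a_2) > 0$, the bounds $w(a) \le w(a_2) < \infty$ and $\tilde B_T/T \to 0$ a.s.\ send the first argument of the max to $-\infty$ uniformly in $a \in [a_1, a_2]$, while $\tilde M_T \uparrow \tilde M_*$ monotonically in $T$ and $\sup_{u \ge T}(\sigma \tilde B_u - \underline\mu u) \to -\infty$ a.s.\ upgrades this to uniform convergence $\tilde M_T \to \tilde M_*$ on $[a_1, a_2]$. Thus $Y_T = \tilde M_T$ eventually and $Y_T \to \tilde M_*$ uniformly on $[a_1, a_2]$ almost surely. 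To extend to $[0, a_2]$ (and to $[0, \infty]$ when $\mu(\infty) > 0$), note that $Y_T$ and $\tilde M_*$ are nondecreasing in $a$ with $Y_T(0) = \tilde M_*(0) = 0$, and that $\tilde M_*$ is continuous at $0$, since $\E[\tilde M_*(a)] = \sigma^2/(2\mu(a)) \to 0$ as $a \to 0^+$ forces $\tilde M_*(0^+) = 0$ by monotone convergence; the classical monotone version of Dini's theorem then yields uniform convergence on $[0, a_2]$. Part (i) follows from $W_T \stackrel{d}{=} Y_T$.

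For part (ii), $\cM_* \in \bM$ and $\cZ_T \in \bM$ follow from Fubini, the marginal mean $\E[M_*(x)] = \sigma^2/(2\mu(x))$, and the bound $\E[W_T(x)] \le \sigma^2/(2\mu(x)) + 2\E[w(x)]$ coming from the Lipschitz inequality \eqref{eq:Lip} applied to the $w = 0$ reference process (itself $\E$-dominated by $M_*$ via the time-reversal formula); both bounds are $dx/x^2$-integrable by \eqref{ass:mu} and $w \in \cI$. To transfer the almost-sure convergence of $Y_T$ to the measures, build $\tilde \cZ_T, \tilde \cM_*$ from $Y_T, \tilde M_*$ via \eqref{def:cZ0}--\eqref{def:cM*TotMass}. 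The CDFs of $\tilde \cZ_T$ and $\tilde \cM_*$ are continuous in $a$, so weak convergence in $\bM$ reduces to pointwise CDF convergence plus convergence of total mass. For each $a > 0$, there exists a random $T_0$ with $\mu(x)T > w(x) + \sigma(\tilde B_T)^+$ uniformly in $x \in (0, a]$ for $T \ge T_0$ (using $\mu(x) \ge \mu(a)$, $w(x) \le w(a)$, and $\tilde B_T/T \to 0$); hence $Y_T(x) = \tilde M_T(x) \le \tilde M_*(x)$ there, and dominated convergence with dominant $\tilde M_*(x)/x^2$ gives $\int_0^a Y_T(x)/x^2\,dx \to \int_0^a \tilde M_*(x)/x^2\,dx$ almost surely. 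Convergence of total mass $Z_T \to Z_*$ is handled by an analogous tail estimate on $[a, \infty)$: if $\mu(\infty) > 0$, via $Y_T(x) \le Y_T(\infty) \to \tilde M_*(\infty)$; if $\mu(\infty) = 0$, via a uniform $L^1$-tail bound on $\int_A^\infty Y_T(x)/x^2\,dx$ furnished by the same $\E[W_T(x)]$ estimate combined with \eqref{ass:mu}.

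The principal obstacle is the dominated convergence step near $x = 0$. The naive bound $Y_T(x) \le w(x) + |\sigma \tilde B_T| + \tilde M_*(x)$ fails to be $dx/x^2$-integrable because of the $x$-independent middle term, so it cannot serve as a uniform dominant. The saving observation is that on $(0, a]$ the drift blowup $\mu(x) \ge \mu(a) > 0$ forces the first argument of the max to become uniformly negative once $T$ is large enough relative to $\tilde B_T$ and $w(a)$, collapsing $Y_T$ onto $\tilde M_T \le \tilde M_*$ and thus providing the integrable dominant. Symmetrically, when $\mu(\infty) = 0$, an analogous care is required to control the large-$x$ tail of $\int_0^\infty Y_T(x)/x^2\,dx$ uniformly in $T$.
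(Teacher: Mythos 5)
Your proof is correct, and it takes a genuinely different route from the paper's in how it handles a general initial condition. The paper applies the time reversal only to the zero-initial-condition process, obtaining $W_t^0(\cdot)\eqdist M_t(\cdot)$ for each fixed $t$, and then treats general $w\in\cI$ by a coupling argument: the hitting time $T_\infty^w=\inf\{s\ge 0:\chi_s^w(\infty)=0\}$ is shown to be a coupling time after which $W^w$ and $W^0$ agree for every $a$ simultaneously, so $\Vert W_t^w(\cdot)-W_t^0(\cdot)\Vert_\infty\to 0$ in probability and the converging-together lemma finishes; the measure-valued part is then inherited from the monotone convergence $\cM_t\xrightarrow{w}\cM_*$ established via Dini and monotone convergence. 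You instead reverse time for the full process including $w$, which yields the single distributional identity $W_T(\cdot)\eqdist\max\{w(\cdot)+\sigma\tilde B_T-\mu(\cdot)T,\ \tilde M_T(\cdot)\}$ and converts everything into an almost-sure statement about one surrogate; the role of the coupling time is played by the observation that the $w$-dependent branch of the maximum becomes uniformly negative (indeed the two mechanisms coincide: that branch is active precisely before the paper's coupling time). Your version buys a concrete advantage in part (ii): the collapse of $Y_T$ onto $\tilde M_T\le\tilde M_*$ on $(0,a]$ hands you the integrable dominant $\tilde M_*(x)/x^2$ directly, where the paper instead uses the Lipschitz bound $W_t^w\le 2w+W_t^0$. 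The price is that $Y_T$ is not monotone in $T$, so you must run the dominated-convergence and tail estimates yourself rather than quoting monotonicity; note that your tail bound in the case $\mu(\infty)=0$ gives total-mass convergence only in probability, which still suffices for convergence in distribution, and that the assertions $\cZ_t^w\in\bM$ and $\cM_*\in\bM$ require the small additional check that the prescribed distribution functions are nondecreasing and right continuous (the paper's Proposition giving $f(x)/x\to 0$ together with integration by parts), which you pass over.
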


Theorem \ref{thm:WStat} is proved in Section \ref{sec:WStat}.
As an immediate consequence of Theorem \ref{thm:WStat}, it follows that the distribution of
the maximum process is the unique stationary distribution for RCBM with standard deviation
$\sigma$ and drift function $\mu(\cdot)$.  We discuss its distribution in Section \ref{sec:M*} below.
Next, for $w\in\cI$, we provide the limiting behavior as $t\to\infty$ of the moments of $W_t^w(a)$ for $a\in\overline\R_+$ and $Z_t^w$ in terms of the corresponding stationary moments. For this, recall that $Z_*=\langle 1, \cM_*\rangle$. 

\bigskip
\begin{thm}\label{thm:WStat2} Suppose $w\in\mathcal{I}$ and $\gamma\ge 1$.
\begin{itemize}
\item[(i)] For $a\in\overline\R_+$ such that $\mu(a)>0$ and $\E\left[\left(w(a)\right)^\gamma\right]<\infty$, $\lim_{t\to \infty} \E[(W_t^w(a))^\gamma]=\E[(M_*(a))^\gamma]$.
\item[(ii)] If \eqref{ass:mu} holds, then $\lim_{t\to\infty}\E[Z_t^w]=\E[Z_*]$.
Furthermore, if $\gamma>1$, $\E[\left(Z_0^w\right)^\gamma]<\infty$,
\begin{equation}
\label{eq:highmoment}
\int_0^1\frac{1}{x^{2\gamma}\mu(x)^\gamma}dx<\infty\quad\hbox{and}\quad
\int_1^\infty\frac{1}{x^{\gamma}\mu(x)^\gamma}dx<\infty,
\end{equation}
then $\lim_{t\to\infty}\E[(Z_t^w)^\gamma]=\E[(Z_*)^\gamma]$.
\end{itemize}
\end{thm}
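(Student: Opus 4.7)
The plan is to combine the distributional convergence from Theorem \ref{thm:WStat} with a uniform integrability argument to upgrade to convergence of moments. A central tool is the Skorokhod-map identity $\Psi[f](t) = f(t) \vee \sup_{0 \leq s \leq t}(f(t) - f(s))$ for $f \in \D_0([0,\infty),\R)$. Applied to $f = \chi(a) = w(a) + \sigma B_\cdot - \mu(a)\cdot$, this yields
\[
W_t^w(a) \;=\; \bigl(w(a) + \sigma B_t - \mu(a) t\bigr) \,\vee\, W_t^0(a),
\]
where $W_t^0(a) = \sup_{0 \leq s \leq t}(\sigma(B_t - B_s) - \mu(a)(t-s))$ is the RCBM at level $a$ with zero initial condition and does not involve $w(a)$. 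This gives the pathwise bounds $W_t^w(a) \leq w(a) + W_t^0(a)$ and $Z_t^w \leq Z_0^w + Z_t^0$, which drive both parts.

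For part (i), Theorem \ref{thm:WStat} (i) gives $W_t^w(a) \convdist M_*(a)$, so it suffices to verify uniform integrability of $\{(W_t^w(a))^\gamma\}_{t \geq 0}$. By time-reversal of Brownian motion on $[0,t]$, $W_t^0(a) \eqdist \sup_{0 \leq u \leq t}(\sigma B_u - \mu(a) u)$, which is pathwise bounded by $M_*(a)$. Because $M_*(a)$ is exponentially distributed with rate $2\mu(a)/\sigma^2$ when $\mu(a) > 0$, all of its moments are finite, so $\sup_t \E[W_t^0(a)^{\gamma+1}] \leq \E[M_*(a)^{\gamma+1}] < \infty$. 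Combined with $\E[w(a)^\gamma] < \infty$ and the domination $(W_t^w(a))^\gamma \leq 2^{\gamma-1}(w(a)^\gamma + W_t^0(a)^\gamma)$, the de la Vallée-Poussin criterion establishes uniform integrability, yielding $\E[(W_t^w(a))^\gamma] \to \E[M_*(a)^\gamma]$.

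For part (ii), Theorem \ref{thm:WStat} (ii) together with continuity of the total-mass functional on $\bM$ (since $1 \in \bC_b(\R_+)$) gives $Z_t^w \convdist Z_*$. For $\gamma = 1$, Fubini gives $\E[Z_t^w] = \int_0^\infty \E[W_t^w(x)]/x^2\, dx$, and dominated convergence applies with the pointwise limit $\E[W_t^w(x)] \to \E[M_*(x)]$ from part (i) and the integrable dominator $(\E[w(x)] + \E[M_*(x)])/x^2$; its integrability follows from \eqref{def:W0} supplemented by $\E[w(\infty)] < \infty$ on the tail, and from \eqref{ass:mu} applied to $\E[M_*(x)] = \sigma^2/(2\mu(x))$. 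For $\gamma > 1$, I establish uniform integrability of $\{(Z_t^w)^\gamma\}$ through $(Z_t^w)^\gamma \leq 2^{\gamma-1}((Z_0^w)^\gamma + (Z_t^0)^\gamma)$. The first term is a single integrable random variable and is trivially uniformly integrable. For the second, the time-reversal coupling identifies the joint law $(W_t^0(x), x > 0) \eqdist (\sup_{0 \leq u \leq t}(\sigma B_u - \mu(x) u), x > 0)$, so $Z_t^0 \eqdist \bar Z_t := \int_0^\infty \sup_{0 \leq u \leq t}(\sigma B_u - \mu(x) u)/x^2\, dx$, which is monotonically increasing almost surely to $Z_*$. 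Monotone convergence gives $\sup_t \E[(Z_t^0)^{\gamma+\epsilon}] = \E[Z_*^{\gamma+\epsilon}]$ for any $\epsilon > 0$, and the conditions \eqref{eq:highmoment} are precisely what one uses, through Minkowski's integral inequality, the moment formula $\E[M_*(x)^{\gamma+\epsilon}] = \Gamma(\gamma+\epsilon+1)(\sigma^2/(2\mu(x)))^{\gamma+\epsilon}$, and a split of the defining integral over $(0,1)$ and $(1,\infty)$, to secure $\E[Z_*^{\gamma+\epsilon}] < \infty$ for some $\epsilon > 0$.

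The main obstacle is the $\gamma > 1$ case of part (ii): showing that \eqref{eq:highmoment} together with the bare hypothesis $\E[(Z_0^w)^\gamma] < \infty$ (rather than a higher moment on $Z_0^w$) is enough for uniform integrability of the family $\{(Z_t^w)^\gamma\}$. The clean resolution rests on the decomposition $Z_t^w \leq Z_0^w + Z_t^0$ so that the $Z_0^w$ term contributes a fixed integrable random variable, while the $Z_t^0$ contribution is analyzed by the time-reversed monotone coupling, which translates the required $(\gamma+\epsilon)$-moment bound into a question about $\E[Z_*^{\gamma+\epsilon}]$; aligning this with \eqref{eq:highmoment} via the split over $(0,1)$ and $(1,\infty)$ is the most delicate step, since the crude Minkowski bound alone would require only \eqref{ass:mu} and would give a looser sufficient condition.
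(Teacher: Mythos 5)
Your proposal is correct in substance and follows the same overall strategy as the paper: invoke Theorem \ref{thm:WStat} for convergence in distribution, decompose $W_t^w(a)$ into an initial-condition part plus the zero-start part $W_t^0(a)$, identify $W_t^0(a)\eqdist M_t(a)\le M_*(a)$ by time reversal (the paper's Lemma \ref{lem:EqualDist}), and upgrade to moment convergence via uniform integrability (the paper's Proposition \ref{prop:ui}). The genuine differences are local. First, you derive the pathwise bound $W_t^w(a)\le w(a)+W_t^0(a)$ from the explicit Skorokhod representation $\Psi[f](t)=f(t)\vee\sup_{s\le t}(f(t)-f(s))$, which is slightly sharper than the paper's $W_t^w(a)\le 2w(a)+W_t^0(a)$ obtained from the Lipschitz property \eqref{eq:Lip}; both yield the same conclusion. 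Second, for $\gamma=1$ in part (ii) you use Fubini plus dominated convergence on $\E[Z_t^w]=\int_0^\infty \E[W_t^w(x)]x^{-2}dx$ rather than uniform integrability of $Z_t^w$; this is a clean, valid alternative. Third, and most interestingly, for $\E[(Z_*)^{p}]<\infty$ you use Minkowski's integral inequality, $\Vert Z_*\Vert_p\le\int_0^\infty \Vert M_*(x)\Vert_p x^{-2}dx$, which requires only \eqref{ass:mu}; the paper instead splits the integral at $1$ and applies H\"older, which is exactly where \eqref{eq:highmoment} enters. Your observation is actually a small strengthening: under \eqref{ass:mu} alone, $Z_*$ has finite moments of every order, so \eqref{eq:highmoment} is needed in the theorem only through the hypothesis $\E[(Z_0^w)^\gamma]<\infty$ and the pointwise moment bounds, not for $\E[(Z_*)^\gamma]<\infty$.

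Two small points to tighten. (a) Your description of how \eqref{eq:highmoment} secures $\E[Z_*^{\gamma+\epsilon}]<\infty$ is muddled: if you went through the H\"older split at exponent $\gamma+\epsilon$ you would need \eqref{eq:highmoment} with $\gamma$ replaced by $\gamma+\epsilon$, which the hypothesis does not give; you must commit to the Minkowski route (which needs only \eqref{ass:mu}) for that step. (b) The de la Vall\'ee-Poussin detour through a $(\gamma+\epsilon)$-moment is unnecessary: since each $(Z_t^0)^\gamma$ is stochastically dominated by the single integrable random variable $(Z_*)^\gamma$, uniform integrability follows directly, as in the paper's Proposition \ref{prop:ui}. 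Neither point is a gap, but the write-up should make the chosen route explicit.
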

The proof of Theorem \ref{thm:WStat2} is given at the end of Section \ref{sec:WStat}.

\subsection{Distributional Properties of the Maximum Process}
\label{sec:M*}

In this section, we study the finite dimensional distributions of the maximum process $(M_*(\cdot),M_*(\infty))$.   We obtain a formula for the $n$-dimensional distributions for $n\geq 2$. 
We begin in Section \ref{sec:1d} by summarizing a known result for the one dimensional distributions.
We proceed in Section \ref{sec:2d} by presenting the two-dimensional distributions.
This case provides insight into the principle structure of the multidimensional joint distribution as well as leading to
an explicit formula for the covariance function.  With this foundational understanding, we then address the more complex 
$n$-dimensional scenario in Section \ref{sec:nd}.

\subsubsection{One Dimensional Distributions}\label{sec:1d}
To begin, we recall an established result for the one dimensional
distributions of $(M_*(\cdot),M_*(\infty))$. For all $t\ge 0, \nu\ge 0$ and $x\in\R_+$,
it is known that
\begin{equation}\label{eq:CDFMax}
\P\left(\sup_{s\in[0,t]} \left(\sigma B_s-\nu s\right)\le x\right)
=
\Phi\left(\frac{x+\nu t}{\sigma\sqrt{t}}\right)-\exp\left(\frac{-2\nu x}{\sigma^2}\right)\Phi\left(\frac{-x+\nu t}{\sigma\sqrt{t}}\right),
\end{equation}
where $\Phi$ denotes the cumulative distribution function of a standard normal random variable (see Corollary 1.8.7 in \cite{Harrison} with $y=x$). Upon considering $a\in(0,\infty]$ and $\nu=\mu(a)\geq 0$ in \eqref{eq:CDFMax}, letting $t\to\infty$ 
and using the fact that $\mu(a)\geq 0$, we find that for each $x\in\R_+$
\begin{equation}\label{eq:M*(a)cdf}
\P\left(M_*(a)\le x\right)=1-\exp\left(-\frac{2\mu(a)}{\sigma^2}x\right).
\end{equation}
In particular, $M_*(a)$ is exponentially distributed with rate $2\mu(a)/\sigma^2$ for each $a\in(0,\infty]$.
If $\mu(\infty)=0$, then this is understood as a degenerate exponential distribution with rate $0$ and
$\P\left(M_*(\infty)\le x\right)=0$ for all $x\in\R_+$, i.e., $\P\left(M_*(\infty)=\infty\right)=1$.

\subsubsection{Two Dimensional Distributions}\label{sec:2d}
For $a\in\overline\R_+$ and $t\ge 0$, define
\begin{equation}\label{def:Mt}
M_t(a):=\sup_{0\leq s\le t} X_s(a).
\end{equation}
\begin{thm}\label{thm:ZStat_2d}
Suppose that $0<a_1<a_2\leq \infty$ and $0\le x_1 < x_2<\infty$.  Then,
\begin{eqnarray}\label{eq:ZStat_2d}
&&\P\left( M_*(a_1)\le x_1, M_*(a_2)\le x_2\right)\nonumber\\
&&\qquad=\P\left( M_{\tau_1}(a_1)\le x_1\right)-\exp\left(\frac{-2\mu(a_2)x_2}{\sigma^2}\right)\P\left( V_{\tau_1}^*\le x_1\right),
\end{eqnarray}
where $\tau_1=\frac{ x_2- x_1 }{\mu(a_1)-\mu(a_2)}$ and $V_{\tau_1}^*:=\sup_{t\in[0,\tau_1] } (X_t(a_1)+2\mu(a_2)t)$.
\end{thm}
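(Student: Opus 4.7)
The approach centers on decomposing the bivariate event at the deterministic time $\tau_1$, exploiting the algebraic identity
\[
X_t(a_2) = X_t(a_1) + c\, t,\qquad c := \mu(a_1) - \mu(a_2) > 0.
\]
By the defining equation $c\tau_1 = x_2 - x_1$, this identity has two consequences: (i) on $[0,\tau_1]$, the bound $X_t(a_1) \leq x_1$ automatically forces $X_t(a_2) \leq x_1 + c\tau_1 = x_2$, so the $a_2$-constraint is redundant before $\tau_1$; and (ii) at time $\tau_1$, the random gap $X_{\tau_1}(a_2) - X_{\tau_1}(a_1)$ equals exactly the threshold gap $x_2 - x_1$. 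Consequently,
\[
\{M_*(a_1) \leq x_1,\, M_*(a_2) \leq x_2\} = \{M_{\tau_1}(a_1) \leq x_1\} \cap \bigl\{\sup_{t \geq \tau_1} X_t(a_1) \leq x_1,\, \sup_{t \geq \tau_1} X_t(a_2) \leq x_2\bigr\}.
\]

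For the post-$\tau_1$ event, I would set $Y_s := \sigma(B_{\tau_1+s} - B_{\tau_1})$, which is independent of $\F_{\tau_1}$, so that $\sup_{t\geq \tau_1} X_t(a_i) = X_{\tau_1}(a_i) + \sup_{s\geq 0}[Y_s - \mu(a_i)s]$ for $i=1,2$. By (ii), the two constraints translate to the \emph{same} upper bound $u := x_1 - X_{\tau_1}(a_1) = x_2 - X_{\tau_1}(a_2)$, and since $\mu(a_2) \leq \mu(a_1)$ the $a_2$-supremum dominates the $a_1$-supremum pathwise; hence only the $a_2$-constraint is binding. Note $u\geq 0$ on $\{M_{\tau_1}(a_1)\leq x_1\}$. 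Conditioning on $\F_{\tau_1}$ and applying \eqref{eq:M*(a)cdf} to the independent Brownian motion $Y$ yields
\[
\P(M_*(a_1) \leq x_1, M_*(a_2) \leq x_2) = \P(M_{\tau_1}(a_1) \leq x_1) - e^{-2\mu(a_2)x_1/\sigma^2}\, \E\bigl[1_{\{M_{\tau_1}(a_1) \leq x_1\}}\, e^{2\mu(a_2) X_{\tau_1}(a_1)/\sigma^2}\bigr].
\]

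The remaining step, which I expect to be the main obstacle, is to identify the expectation with $e^{-2\mu(a_2)(x_2-x_1)/\sigma^2}\,\P(V_{\tau_1}^* \leq x_1)$. My plan is a Girsanov change of measure with density $\left.\tfrac{d\tilde{\mathbb{P}}}{d\P}\right|_{\F_{\tau_1}} = \exp(\theta B_{\tau_1} - \theta^2\tau_1/2)$ for $\theta := 2\mu(a_2)/\sigma$. Substituting $\sigma B_{\tau_1} = X_{\tau_1}(a_1) + \mu(a_1)\tau_1$ and using $c\tau_1 = x_2 - x_1$, a direct algebraic check gives $\exp(\theta B_{\tau_1} - \theta^2\tau_1/2) = \exp(2\mu(a_2)X_{\tau_1}(a_1)/\sigma^2)\exp(2\mu(a_2)(x_2-x_1)/\sigma^2)$. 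Under $\tilde{\mathbb{P}}$, the process $X_\cdot(a_1)$ has drift $-(\mu(a_1) - 2\mu(a_2))$, so on $[0,\tau_1]$ it has the same law as $V_\cdot = X_\cdot(a_1) + 2\mu(a_2)\cdot$ under $\P$; hence $\tilde{\mathbb{P}}(M_{\tau_1}(a_1) \leq x_1) = \P(V_{\tau_1}^* \leq x_1)$. Combining these two identifications gives the desired expression and hence \eqref{eq:ZStat_2d}. The key care needed is the simultaneous bookkeeping in the Girsanov step, verifying that the exponential density yields precisely the integrand factor $e^{2\mu(a_2)X_{\tau_1}(a_1)/\sigma^2}$ while also producing the drift shift that converts $X_\cdot(a_1)$ into $V_\cdot$.
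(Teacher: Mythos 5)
Your proposal is correct and follows essentially the same route as the paper's proof (which establishes the general $n$-dimensional Theorem \ref{thm:ZStat} by induction, the $n=2$ case being exactly your argument): decompose at the deterministic intersection time $\tau_1$, note that only the $a_1$-constraint binds before $\tau_1$ and only the $a_2$-constraint binds after, apply the one-dimensional exponential law to the restarted Brownian motion, and exponentially tilt to absorb the factor $e^{2\mu(a_2)X_{\tau_1}(a_1)/\sigma^2}$ into a drift change from $\mu(a_1)$ to $\mu(a_1)-2\mu(a_2)$. The only cosmetic difference is that you implement the tilting via a Girsanov change of measure, whereas the paper performs the identical computation at the level of the explicit reflection-principle density of $(X_{\tau_1},M_{\tau_1})$ via identity \eqref{eq:basic}.
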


We remark that if $a_2=\infty$ and $\mu(a_2)=0$, then $\P\left( M_*(a_1)\le x_1, M_*(a_2)\le x_2\right)=0$ because $\P\left( M_*(a_2)\le x_2\right)=0$
and the right side of \eqref{eq:ZStat_2d} is also zero since $V_{\tau_1}^*=M_{\tau_1}(a_1)$ in this case.  We also note that
$0\le x_1 < x_2<\infty$ is considered in Theorem \ref{thm:ZStat_2d} since $M_{\tau_1}(a_1)\leq M_*(a_2)$ for all $0<a_1<a_2\leq \infty$.
In particular, if $0\le x_2\leq x_1<\infty$, then $\P\left( M_*(a_1)\le x_1, M_*(a_2)\le x_2\right)=\P\left( M_*(a_2)\le x_2\right)$, which is a one-dimensional distribution and is given by \eqref{eq:M*(a)cdf}.  
Theorem \ref{thm:ZStat_2d}, together with the more general $n$-dimensional version Theorem \ref{thm:ZStat} stated below, is proved  in Section \ref{sec:distriproof}.  Next, we make one additional observation concerning the formula \eqref{eq:ZStat_2d}.

\bigskip
\begin{rem}
\label{rem:2d}
Let $0<a_1<a_2\leq \infty$ and $0\le x_1 < x_2<\infty$.  Observe that
\begin{eqnarray*}
\{M_*(a_1)\le x_1, M_*(a_2)\le x_2\}
&=&\{X_s(a_1)\le x_1\hbox{ and }X_s(a_2)\le x_2\hbox{ for all }s\ge 0\}\\
&=&\{\sigma B_s\le\min\{x_1+\mu(a_1)s, x_2+\mu(a_2)s\}\hbox{ for all }s\ge 0\}.
\end{eqnarray*}
The two lines that appear in the minimum intersect at time $\tau_1$.  Since $x_1<x_2$, we have
\begin{eqnarray*}
\{M_*(a_1)\le x_1, M_*(a_2)\le x_2\}&=&\{ M_{\tau_1}(a_1)\leq x_1\hbox{ and }M_*(a_2)\leq x_2 \}\\
&=&\{ M_{\tau_1}(a_1)\leq x_1\} \setminus \{ M_{\tau_1}(a_1)\leq x_1\hbox{ and }M_*(a_2)>x_2 \}.
\end{eqnarray*}
Hence, an implication of the result in Theorem \ref{thm:ZStat_2d} is that, for $0<a_1<a_2\leq \infty$ and $0\le x_1 < x_2<\infty$,
$$
\mathbb{P}\left(M_{\tau_1}(a_1)\leq x_1 \mid M_*(a_2)>x_2\right)=\mathbb{P}\left( V_{\tau_1}^*\leq x_1\right).
$$
We can rewrite this identity by
\begin{eqnarray*}
&&\P(\sigma B_s\leq x_1+\mu(a_1)s\ \text{for all}\ s\leq \tau_1|\sigma B_s> x_2+\mu(a_2)s\ \text{for some}\ s\ge 0)\\
&&\qquad =\P(\sigma B_s\leq x_1+(\mu(a_1)-2\mu(a_2))s, \text{for all}\ s\leq \tau_1).
\end{eqnarray*}
A direct interpretation states that if a scaled Brownian motion \( \sigma B \) is known to ultimately pass through the line \( \ell_2(s) = x_2 + \mu(a_2)s \), $s\geq 0$, then the likelihood of keeping its position under a steeper line \( \ell_1(s)= x_1 + \mu(a_1)s \), $s\geq 0$, with the smaller starting point $x_1$ until time \( \tau_1 \) (the time of intersection of these two lines) is the same as the likelihood of keeping its position under the less steep line \( \ell_{1,2}(s) = x_1 + (\mu(a_1) - 2\mu(a_2))s \), $s\geq 0$, until time \( \tau_1 \).
\end{rem}

By leveraging the result in Theorem \ref{thm:ZStat_2d} and some intensive calculations shown in Section \ref{cor:MCov}, we explicitly represent the covariance structure of $M_*(\cdot)$ as follows.

\bigskip
\begin{cor}\label{cor:MCov} For $0<a_1<a_2\le\infty$ such that $\mu(a_2)>0$,
\begin{equation}\label{eq:MCov}
\hbox{Cov}(M_*(a_1),M_*(a_2))=\frac{\sigma^4}{4\mu(a_1)^2}\left(2-\frac{\mu(a_2)}{\mu(a_1)}\right).
\end{equation}
\end{cor}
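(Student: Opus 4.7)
The plan is to compute the mixed moment $\E[M_*(a_1)M_*(a_2)]$ directly and then subtract the product of marginal means $\E[M_*(a_i)] = \sigma^2/(2\mu(a_i))$, which is immediate from the exponential marginal law \eqref{eq:M*(a)cdf}. The starting point is the tail-integral identity for nonnegative random variables,
\begin{equation*}
\E[M_*(a_1)M_*(a_2)] = \int_0^\infty\!\!\int_0^\infty \P\bigl(M_*(a_1)>x_1,\,M_*(a_2)>x_2\bigr)\,dx_1\,dx_2,
\end{equation*}
together with a decomposition of the domain according to whether $x_1\ge x_2$ or $x_1<x_2$.

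On the region $\{x_1\ge x_2\}$, the monotonicity $M_*(a_1)\le M_*(a_2)$ pathwise (a consequence of $\mu(a_1)>\mu(a_2)$ applied to the same Brownian sample path) collapses the joint tail to $\P(M_*(a_1)>x_1)=e^{-2\mu(a_1)x_1/\sigma^2}$, and an elementary integration contributes $\sigma^4/(4\mu(a_1)^2)$. On $\{x_1<x_2\}$, I would apply inclusion-exclusion together with the joint CDF from Theorem \ref{thm:ZStat_2d} to rewrite the integrand as
\begin{equation*}
e^{-2\mu(a_1)x_1/\sigma^2} - \P\bigl(M_{\tau_1}(a_1)>x_1\bigr) + e^{-2\mu(a_2)x_2/\sigma^2}\,\P\bigl(V_{\tau_1}^*>x_1\bigr),
\end{equation*}
where $\tau_1=(x_2-x_1)/(\mu(a_1)-\mu(a_2))$. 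A change of variables $(x_1,x_2)\mapsto(x_1,\tau)$ with $\tau=\tau_1$ and Jacobian $\mu(a_1)-\mu(a_2)$ converts this piece of the integral to one over the first quadrant in $(x_1,\tau)$, after which \eqref{eq:CDFMax} applied with drift $\mu(a_1)$ for $M_\tau(a_1)$ and effective drift $\mu(a_1)-2\mu(a_2)$ for $V_\tau^*$ yields explicit Gaussian expressions; the subcase $2\mu(a_2)>\mu(a_1)$ is handled by observing that \eqref{eq:CDFMax} continues to hold with $\nu<0$ by symmetry of Brownian motion.

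After these substitutions, the remaining pieces take the form $\int_0^\infty\!\int_0^\infty x_1^{j} e^{-\alpha x_1-\beta\tau}\,\Phi\bigl(c_0+c_1\sqrt{\tau}+c_2 x_1/\sqrt{\tau}\bigr)\,dx_1\,d\tau$, each of which admits a closed-form evaluation by completing the square in the Gaussian argument, interchanging the order of integration, and invoking the standard one-dimensional identities for $\int e^{-\alpha x}\Phi(\cdot)\,dx$ and $\int \tau^{-1/2}e^{-\beta\tau}\,d\tau$. The main obstacle is the bookkeeping: after the change of variables the first and third terms in the Region-$\{x_1<x_2\}$ integrand are individually divergent in $\tau$, and they must first be grouped with $\P(M_\tau(a_1)>x_1)$ into the integrable combination $\P(M_*(a_1)>x_1)-\P(M_\tau(a_1)>x_1)$ (which tends to zero as $\tau\to\infty$ since $M_\tau(a_1)\uparrow M_*(a_1)$) before the integrals can be evaluated term by term. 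Once this regrouping is carried out, a sequence of cancellations driven by common exponential prefactors collapses the computed expression, and subtracting $\sigma^4/(4\mu(a_1)\mu(a_2))$ produces the stated formula $\frac{\sigma^4}{4\mu(a_1)^2}\bigl(2-\mu(a_2)/\mu(a_1)\bigr)$.
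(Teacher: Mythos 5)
Your strategy is sound and the key inputs are all correct, but you reach the answer by a genuinely different computational route than the paper. The paper works with the pair $(M_*(a_1),\,M_*(a_2)-M_*(a_1))$: it differentiates the joint CDF of Theorem \ref{thm:ZStat_2d} twice to obtain an explicit joint density (Lemma \ref{lem:pdfg}), and then evaluates $\E[M_*(a_1)(M_*(a_2)-M_*(a_1))]$ as a double integral against that density, which is where the identity \eqref{eq:cov3} and the long Gaussian manipulations of Section \ref{sec:exp} live. You instead integrate the joint \emph{tail} over the quadrant, split on $\{x_1\ge x_2\}$ versus $\{x_1<x_2\}$, and change variables to $(x_1,\tau_1)$. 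Your route avoids Lemma \ref{lem:pdfg} entirely (no differentiation of the CDF, hence no risk of sign errors there), and your treatment of the region $\{x_1\ge x_2\}$ — collapsing the joint tail to the marginal by pathwise monotonicity and getting $\sigma^4/(4\mu(a_1)^2)$ immediately — is clean and correct. The inclusion--exclusion rewriting of the tail on $\{x_1<x_2\}$ is also correct, as is the observation that \eqref{eq:CDFMax} extends to negative drift (the finite-time reflection formula holds for all $\nu\in\R$; the paper relies on the same fact implicitly through \eqref{eq:basic} with $\zeta_1=\nu_1-2\nu_2$ possibly negative). The price of your route is a comparably heavy, but differently organized, family of integrals $\int\!\int x_1^j e^{-\alpha x_1-\beta\tau}\Phi(\cdot)\,dx_1\,d\tau$.

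Two caveats. First, a small misstatement: in the $\{x_1<x_2\}$ integrand it is the \emph{first and second} terms ($e^{-2\mu(a_1)x_1/\sigma^2}$, which is constant in $\tau$, and $\P(M_{\tau_1}(a_1)>x_1)$, which increases to that constant) that are individually non-integrable in $\tau$; the third term $e^{-2\mu(a_2)x_2/\sigma^2}\P(V_{\tau_1}^*\le x_1)$ is integrable on its own because $x_2=x_1+(\mu(a_1)-\mu(a_2))\tau$ makes its exponential prefactor decay in $\tau$ (using $\mu(a_2)>0$). Your proposed remedy — regrouping into $\P(M_*(a_1)>x_1)-\P(M_{\tau_1}(a_1)>x_1)$ — is nevertheless exactly the right one for the actual divergence. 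Second, and more substantively, the proposal stops at the point where the real work begins: the closed-form evaluation of the resulting Gaussian integrals and the verification that everything collapses to $\frac{\sigma^4}{4\mu(a_1)^2}\left(2-\frac{\mu(a_2)}{\mu(a_1)}\right)$ is asserted but not carried out, and this computation occupies essentially all of the paper's Section \ref{sec:cov}. As a plan it would work; as a proof it is incomplete until that evaluation is done.
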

Combining \eqref{eq:MCov} with \eqref{eq:M*(a)cdf}, we recover the correlation as previously noted in \eqref{eq:Corr}.
Another way to express the result in Corollary \ref{cor:MCov} is to write
\begin{eqnarray*}
\hbox{Cov}(M_*(a_1),M_*(a_2)-M_*(a_1))=\frac{\sigma^4}{4\mu(a_1)^2}\left(1-\frac{\mu(a_2)}{\mu(a_1)}\right),
\end{eqnarray*}
for $0<a_1<a_2\le\infty$ such that $\mu(a_2)>0$.  Indeed this is equivalent to Corollary \ref{cor:MCov} due to
\eqref{eq:M*(a)cdf} and bilinearity of covariance.

\subsubsection{Finite Dimensional Distributions}\label{sec:nd}

To characterize the $n$-dimensional joint distributions of $M_*$ for arbitrary $n\geq 3$,
we first introduce some notation and present some intuitive observations.  For $n\in\N$, let
\begin{eqnarray*}
\mathcal{A}_n&:=&
\{ \vec{a}\in\R_+^{n-1}\times\eR_+ : 0<a_1<a_2<\cdots<a_n\leq \infty\}.
\end{eqnarray*}
We wish to compute $\P\left( M_*(a_i)\le x_i\hbox{ for }i=1,2,\dots,n\right)$ for each $n\in\N$, $\vec{a}\in\mathcal{A}_n$ and $\vec{x}\in \R_+^n$. 
Given $n\in\N$ and $\vec{a}\in\mathcal{A}_n$, certain choices of $\vec{x}\in\R_+^n$ result in redundant constraints that lead to a dimension reduction.  For instance, similarly to the $n=2$ case above, if  $n\in\N$, $\vec{a}\in\mathcal{A}_n$, and $\vec{x}\in\R_+^n$ are such that there exists $i'\in\{1,2,\dots,n-1\}$ such that $x_{i'+1}\leq x_{i'}$, then, $M_*(a_{i'+1})\leq x_{i'+1}$ implies $M_*(a_{i'})\leq x_{i'}$ because of $M_*(a_{i'})\leq M_*(a_{i'+1})$. Therefore,
\begin{eqnarray*}
\P\left( M_*(a_i)\le x_i\hbox{ for }i=1,2,\dots,n\right)
&=&\P\left( M_*(a_i)\le x_i\hbox{ for }i\neq i'\right).
\end{eqnarray*}
However, it is not enough to restrict to $\vec{x}\in\R_+^n$ such that 
$0\leq x_1<x_2<\cdots< x_n<\infty$.
For example, consider a choice of $\vec{a}\in\mathcal{A}_3$ such that $\mu(a_1)=3$, $\mu(a_2)=2$ and $\mu(a_3)=1$,
and take $x_1=1$, $x_2=3$, and $x_3=4$.
Then
$$
\P\left( M_*(a_i)\le x_i\hbox{ for }i=1,2,3\right)=\P\left( \sigma B_s\le \min\{ 3s+1, 2s+3, s+4\}\hbox{ for all }s\ge 0\right).
$$
Simple algebra shows that $2s+3>\min\{ 3s+1, 2s+3, s+4\}$ for any $s\ge 0$.  See Figure \ref{fig:1}.  Hence,
\begin{eqnarray*}
\P\left( M_*(a_i)\le x_i\hbox{ for }i=1,2,3\right)&=&\P\left( \sigma B_s\le \min\{ 3s+1,  s+4\}\hbox{ for all }s\ge 0\right)\\
&=&\P\left( M_*(a_1)\le x_1, M_*(a_3)\le x_3\right),
\end{eqnarray*}
which is a 2-dimensional distribution.  To avoid such issues, the lines $\ell_i(s)=\mu(a_i) s + x_i$, $s\ge 0$ and $i=1,2,3$, must intersect sequentially.  For example, if $x_3=6$ rather than 4,  we see that $\ell_1$ and $\ell_2$ intersect at $s=2$ and $\ell_2$ and $\ell_3$ intersect at $s=3$.  Thus, we have
\begin{eqnarray*}
&&\P\left( M_*(a_i)\le x_i\hbox{ for }i=1,2,3\right)\\
&&\qquad=\P\left(\sup_{s\in[0,2)} X_s(a_1)\le 1, \sup_{s\in[2,3)}X_s(a_2)\le 3,\ \sup_{s\ge 3} X_s(a_3)\le 6 \right),
\end{eqnarray*}
which remains 3-dimensional. See Figure \ref{fig:2}.

\begin{figure}[h!]
    \centering
    \begin{subfigure}[b]{0.49\textwidth}
        \centering
        \includegraphics[width=\textwidth]{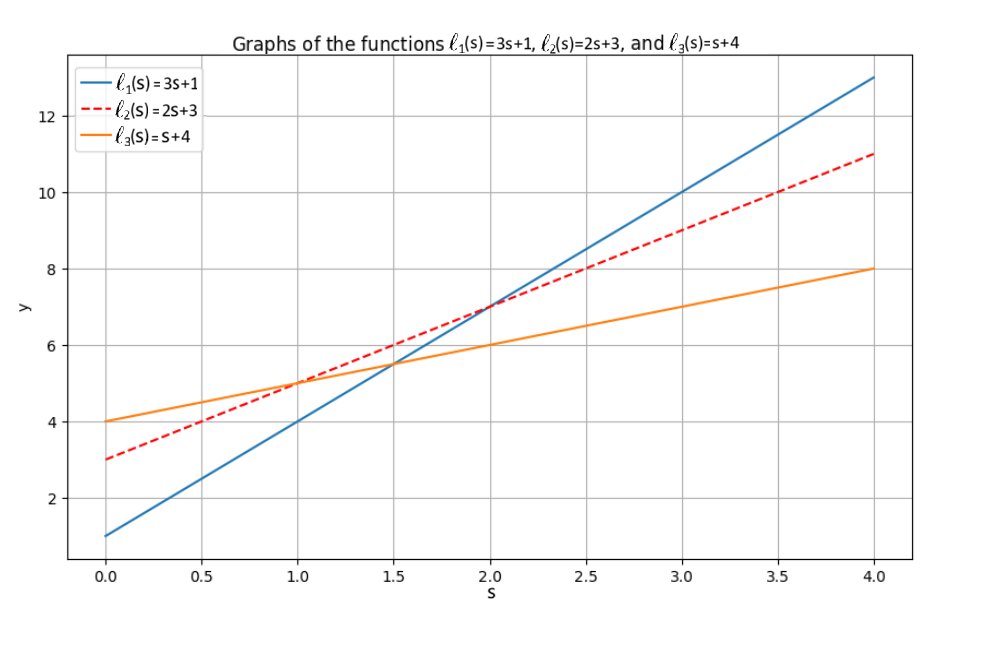}
        \caption{Redundant restriction $\ell_2(s)=2s+3$ causes a dimension reduction}
        \label{fig:1}
    \end{subfigure}
    \hfill
    \begin{subfigure}[b]{0.49\textwidth}
        \centering
        \includegraphics[width=\textwidth]{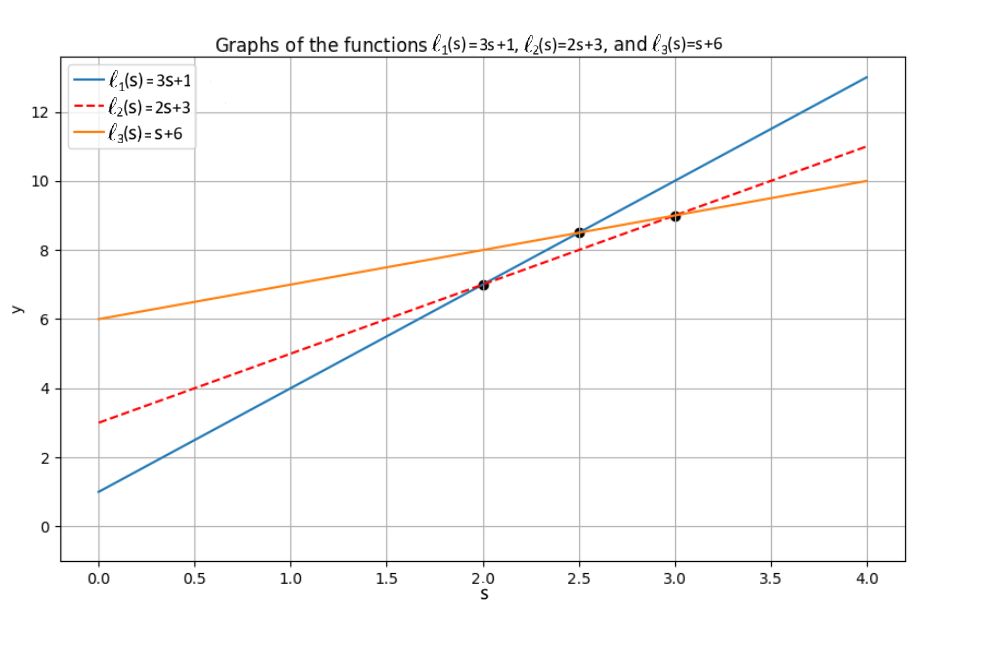}
        \caption{Valid restrictions result in a genuine 3-dimensional joint distribution}
        \label{fig:2}
    \end{subfigure}
    \label{fig:figures}
\end{figure}

In general, given $n\in\N$, $\vec{a}\in\mathcal{A}_n$ and $\vec{x}\in \R_+^n$, we have
\begin{eqnarray}
	&&\{M_*(a_i)\le x_i\hbox{ for }i=1,2,\dots,n\}\nonumber\\
&&\qquad=\{\sigma B_s\le \mu(a_i)s+x_i\hbox{ for all }s\ge 0\hbox{ and }i=1,2,\dots,n\}.\label{eq:const}\\
&&\qquad=\{\sigma B_s\le \min\{\ell_i(s) : i=1,2,\dots,n\}\hbox{ for all }s\ge 0\},\nonumber
\end{eqnarray}
where $\ell_i(s)=\mu(a_i)s+x_i$ for $s\ge0$ and $i=1,2,\dots,n$.
For $n\in\N$, $\vec{a}\in\mathcal{A}_n$ and $\vec{x}\in\R_+^n$, let $\tau_0=0$, for $i=1,2,\dots,n-1$, 
$$
\tau_i=\frac{x_{i+1}-x_i}{\mu(a_i)-\mu(a_{i+1})},
$$
and $\tau_n=\infty$.  Then, $\tau_i$ denotes the $s$-coordinate of the intersection time of lines $\ell_i$ and $\ell_{i+1}$, i.e., $\ell_i(\tau_i)= \ell_{i+1}(\tau_i)$ for $i=1,2,\dots,n-1$.
In order to guarantee that those lines intersect sequentially, we consider the following set: for $n\in\N$ and $\vec{a}\in\mathcal{A}_n$, let
\begin{equation}\label{ass:tau}
\mathcal{X}_n^{\vec{a}}:=\{ \vec{x}\in\R_+^n : 0=\tau_0<\tau_1<\cdots<\tau_{n-1}<\tau_n=\infty\}.
\end{equation}
For $n\in\N$, $\vec{a}\in\mathcal{A}_n$ and $\vec{x}\in\R_+^n$,
it follows from Lemma \ref{lem:tau}, which is stated and proved in Section \ref{sec:distriproof},
that all constraints in \eqref{eq:const} are necessary if and only if $\vec{x}\in\mathcal{X}_n^{\vec{a}}$.

To state our $n$-dimensional result, we introduce some additional stochastic processes to simplify the notation.
Let $n\in\N$, $\vec{a}\in\mathcal{A}_n$ and $\vec{x}\in\mathcal{X}_n^{\vec{a}}$.  For each $t\ge 0$,
let $i_t:= i\in\{1,\dots,n\} $ such that $ t\in(\tau_{i-1},\tau_{i}]$.  Then, for all $t\ge 0$,
let
\begin{alignat}{3}
U_t&:=\sigma B_t -\int_0^t\mu(a_{i_s}) ds&\qquad\hbox{and}\qquad U_t^*&:=\sup_{s\in[0,t]} U_s;\label{def:U}\\
V_t&:=U_t+2\mu(a_n) t &\qquad\hbox{and}\qquad V_t^*&:=\sup_{s\in[0,t]} V_s.\label{def:V}
\end{alignat}
With these, we can state our result characterizing the finite dimensional distributions of $(M_*(\cdot),M_*(\infty))$.

\bigskip
\begin{thm}\label{thm:ZStat}
Suppose that $n\in\N$, $\vec{a}\in\mathcal{A}_n$ and $\vec{x}\in\mathcal{X}_n^{\vec{a}}$.  Then, 
\begin{eqnarray*}
&&\P\left( M_*(a_i)\le x_i\hbox{ for }i=1,2,\dots,n\right)\\
&&\qquad=\P\left( U_{\tau_{n-1}}^*\le x_1\right)-\exp\left(\frac{-2\mu(a_n)x_n}{\sigma^2}\right)\P\left( V_{\tau_{n-1}}^*\le x_1\right).
\end{eqnarray*}
\end{thm}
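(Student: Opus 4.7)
The plan is to reduce the joint event to a single first-passage statement for $\sigma B$ beneath the piecewise linear lower envelope of the lines $\ell_i(s) := \mu(a_i)s + x_i$, then to split at the last breakpoint $\tau_{n-1}$ using the Markov property of $B$ and the one-line formula \eqref{eq:CDFMax}, and finally to use Girsanov's theorem to convert the residual Esscher-type expectation into the $V^*$-probability on the right-hand side.

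First I would rewrite the event using the envelope $h(s) := \min_{1 \leq i \leq n} \ell_i(s)$:
$$\{M_*(a_i) \leq x_i,\ i=1,\ldots,n\} = \{\sigma B_s \leq h(s) \text{ for all } s \geq 0\}.$$
The hypothesis $\vec{x} \in \mathcal{X}_n^{\vec{a}}$, together with Lemma \ref{lem:tau} (stated and proved in Section \ref{sec:distriproof}), guarantees that on each interval $(\tau_{i-1}, \tau_i)$ the minimum is attained by $\ell_i$. Hence $h$ is continuous and piecewise linear, has slope $\mu(a_{i_s})$ on $(\tau_{i-1}, \tau_i)$, starts at $h(0) = x_1$, and satisfies $h(s) = x_1 + \int_0^s \mu(a_{i_r})\,dr$. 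In particular $\{\sigma B_s \leq h(s)\ \text{for}\ s \leq \tau_{n-1}\} = \{U_{\tau_{n-1}}^* \leq x_1\}$, and on the terminal stretch $s \geq \tau_{n-1}$ the envelope coincides with the single line $\ell_n$.

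Next I would decompose at $\tau_{n-1}$ and apply the Markov property of $B$. On the event $\{\sigma B_{\tau_{n-1}} \leq h(\tau_{n-1})\}$, letting $t \to \infty$ in \eqref{eq:CDFMax} yields
$$\P\bigl(\sigma B_s \leq \ell_n(s),\ s \geq \tau_{n-1} \,\big|\, \F_{\tau_{n-1}}\bigr) = 1 - \exp\!\left(-\tfrac{2\mu(a_n)}{\sigma^2}\bigl(h(\tau_{n-1}) - \sigma B_{\tau_{n-1}}\bigr)\right),$$
since $h(\tau_{n-1}) = \ell_n(\tau_{n-1})$. Taking expectations then gives
$$\P\bigl(M_*(a_i) \leq x_i,\ i \leq n\bigr) = \P\bigl(U_{\tau_{n-1}}^* \leq x_1\bigr) - e^{-2\mu(a_n)h(\tau_{n-1})/\sigma^2}\, \E\!\left[\id_{\{U_{\tau_{n-1}}^* \leq x_1\}}\, e^{2\mu(a_n)B_{\tau_{n-1}}/\sigma}\right].$$

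The step I expect to be the main obstacle is identifying this Esscher-type expectation with $\P(V_{\tau_{n-1}}^* \leq x_1)$ up to the correct exponential factor. I would apply Girsanov's theorem on $[0,\tau_{n-1}]$ with $\theta = 2\mu(a_n)/\sigma$: under the equivalent probability $\mathbb{Q}$ with density $\exp\!\bigl(\theta B_{\tau_{n-1}} - \theta^2 \tau_{n-1}/2\bigr)$, the process $\tilde B_t := B_t - \theta t$ is a standard Brownian motion. Rewriting $\{U_{\tau_{n-1}}^* \leq x_1\} = \{\sigma B_s \leq h(s),\ s \leq \tau_{n-1}\}$ in terms of $\tilde B$ gives $\{\sigma \tilde B_s \leq h(s) - 2\mu(a_n)s,\ s \leq \tau_{n-1}\}$, whose $\mathbb{Q}$-probability equals $\P(V_{\tau_{n-1}}^* \leq x_1)$ by the definition $V_t = U_t + 2\mu(a_n)t$. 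Collecting the Girsanov normalization $e^{\theta^2 \tau_{n-1}/2} = e^{2\mu(a_n)^2 \tau_{n-1}/\sigma^2}$ with the prefactor $e^{-2\mu(a_n)h(\tau_{n-1})/\sigma^2}$ and using $h(\tau_{n-1}) = x_n + \mu(a_n)\tau_{n-1}$, the combined exponent telescopes to $-2\mu(a_n)x_n/\sigma^2$, yielding the stated identity. The degenerate case $\mu(a_n) = 0$ (possible only when $a_n = \infty$) has to be checked separately: both sides vanish because $M_*(\infty) = \infty$ almost surely and $V_t = U_t$ makes the two terms on the right coincide.
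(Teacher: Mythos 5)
Your proof is correct, but it is organized quite differently from the paper's. The paper proves the identity by induction on $n$: it conditions at the \emph{first} breakpoint $\tau_1$ on the value $X_{\tau_1}^{\nu_1}=u$ restricted to $\{M_{\tau_1}^{\nu_1}\le x_1\}$, using the explicit joint density $f_{\tau_1}^{\nu_1}(u,x_1)$ from \eqref{fact}, applies the induction hypothesis to the restarted $(n-1)$-line problem, and absorbs the exponential factor $\exp(2\nu_k((\nu_1-\nu_k)\tau_1+u)/\sigma^2)$ into a drift change at the density level via the identity \eqref{eq:basic} (with $\zeta=\nu-2\alpha$). You instead split at the \emph{last} breakpoint $\tau_{n-1}$, with no induction: the piecewise-linear envelope identification gives $\{U^*_{\tau_{n-1}}\le x_1\}$ on $[0,\tau_{n-1}]$ directly, the all-time supremum formula \eqref{eq:M*(a)cdf} applied after the Markov property at $\tau_{n-1}$ produces the exponential overshoot correction $1-\exp(-2\mu(a_n)(h(\tau_{n-1})-\sigma B_{\tau_{n-1}})/\sigma^2)$ (valid on $\{U^*_{\tau_{n-1}}\le x_1\}$, where the argument is nonnegative, which you correctly note), and a single Girsanov tilt with $\theta=2\mu(a_n)/\sigma$ converts the resulting weighted expectation into $\P(V^*_{\tau_{n-1}}\le x_1)$; the exponents indeed telescope to $-2\mu(a_n)x_n/\sigma^2$. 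The mechanism is the same in both proofs --- \eqref{eq:basic} is exactly the density-level form of your change of measure --- but your arrangement makes the interpretation in Remark \ref{rem:nd} transparent (the $V^*$ term is the conditional probability given $\{M_*(a_n)>x_n\}$) and avoids carrying $f_t^{\nu}(u,x)$ through an induction, at the cost of invoking Girsanov's theorem rather than elementary density manipulations. Note that you still implicitly rely on the content of Lemma \ref{lem:tau} (that on $[\tau_{i-1},\tau_i)$ the envelope is $\ell_i$ when $\vec x\in\cX_n^{\vec a}$), which the paper proves separately, so your argument is not self-contained on that point but correctly defers to it. Your handling of the degenerate case $a_n=\infty$, $\mu(\infty)=0$ matches the paper's.
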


The proof of Theorem \ref{thm:ZStat} is given in Section \ref{sec:distriproof}.  To better understand the intuition of this theorem as well as the processes of $U$ and $V$, we present the following observation similar to Remark \ref{rem:2d}. 

\bigskip
\begin{rem}\label{rem:nd}
In light of the fact $\exp\left(\frac{-2\mu(a_n)x_n}{\sigma^2}\right)=\P(M_*(a_n)>x_n)$, we can rewrite $\P\left( M_*(a_i)\le x_i\hbox{ for }i=1,2,\dots,n\right)$ under conditional probability as:
\begin{eqnarray*}
&&\P\left( M_*(a_i)\le x_i\hbox{ for }i=1,2,\dots,n\right)=\P\left( M_*(a_i)\le x_i\hbox{ for }i=1,2,\dots,n-1\right)\\
&&\qquad-\P\left(M_*(a_n)>x_n\right)\P\left( M_*(a_i)\le x_i\hbox{ for }i=1,2,\dots,n-1\mid M_*(a_n)>x_n\right).
\end{eqnarray*}
Thus, we can understand the following:
\begin{eqnarray*}
\P\left( U_{\tau_{n-1}}^*\le x_1\right)&=&\P\left( M_*(a_i)\le x_i\hbox{ for }i=1,2,\dots,n-1\right),\\
\P\left( V_{\tau_{n-1}}^*\le x_1\right)&=&\P\left( M_*(a_i)\le x_i\hbox{ for }i=1,2,\dots,n-1\mid M_*(a_n)>x_n\right).
\end{eqnarray*}
Intuitively, the first equality can be explained as the maximums of the CBM starting from zero are suitably controlled until time $\tau_{n-1}$ by considering a Brownian motion $U$ with a suitably defined piecewise constant drift that changes at the times $\tau_1, \tau_2,\dots,\tau_{n-1}$ and confining that process to be bounded above by $x_1$ until time $\tau_{n-1}$. At the same time, when conditioning on the event $\{ M_{\tau_n}(a_n)>x_n\}$, $2\mu(a_n)$ must be added to the each of the piecewise constant drifts, which gives the process $V$.
\end{rem}

\section{Application to SRPT Queues}\label{sec:SRPT}

As an application of the main results in this paper, we obtain the stationary behavior of the heavy traffic scaling limit obtained in \cite{heavy tails} for a sequence of SRPT queues with a heavy tailed processing time distribution.  In Section \ref{sec:SPRTQueueModel}, we describe the SPRT queue model and its measure valued state descriptor.  As noted in the introduction, the complex SRPT dynamics result in a system that cannot be analyzed in closed form.  Hence, scaling limits are of interest as tractable approximations.  In Section \ref{sec:ScalingLimits},  we briefly summarize some of the scaling limit theorems that have been established for SRPT queues, including introducing the distribution dependent scaling that gives rise to the scaling limit in \cite{heavy tails}.  In Section \ref{sec:ht}, we discuss this scaling limit and make note that it is a specific instance of the RCBM and RCBM measure valued process defined in Section \ref{sec:DiffModel}.  Finally, in Section \ref{sec:SRPTthms}, we state our main results for the stationary behavior of the scaling limit obtained in \cite{heavy tails}.

\subsection{The Model and Measure Valued State Descriptor}\label{sec:SPRTQueueModel}
Here we consider a single server SRPT queue.
For this, jobs arrive to the system according to a possibly delayed renewal process $E(\cdot)$ with jumps of size one, arrival rate $\lambda\in(0,\infty)$ and finite interarrival time standard deviation $\sigma_a$.
Upon its arrival, each job is assigned a processing time taken from an independent and identically distributed sequence $\{v_i\}_{i\in\N}$ of positive random variables with common cumulative distribution function (CDF) $F$ that has finite, positive mean $\mathbb{E}[v_1]$ and finite, positive standard deviation $\sigma_s$; the $i$th job to arrive is assigned processing time $v_i$.
There may also be a finite number $q_0$ of initial jobs in the system at time zero with positive remaining processing times $\{ v_{-q_0+1}, v_{-q_0+2},\dots, v_0\}$ satisfying $v_{-i-1}\le v_{-i}$ for $i=0,\dots, q_0-2$ and other mild conditions.  Then, for $i\in\{-q_0+1,-q_0+2,\dots, 0\}\cup\mathbb{N}$, $v_i$ is the amount of server effort or work measured in processing time units required by job $i$.
If $q_0>0$, the the work associated with the initial job with index $-q_0+1$ is being processed by the server at rate one beginning at time zero until either a new job arrives or time $v_{-q_0+1}$, which ever is smaller.
When a job arrives at a time when another job is currently being processed, it's processing time is compared with that remaining for job being processed.  If its processing time is strictly smaller, the job that was being processed is placed on hold and sent back into the queue, while the job that just arrived commences processing at rate one.
If the new arrival's processing time is greater or equal to that of the job being processed, the new arrival is placed in the queue and the processing of the current job at rate one continues.  Once the job indexed by $i$ receives a total of $v_i$ units of processing time, it departs the system.  If there are other jobs waiting in the queue at the time of a departure, the job waiting in queue with the smallest remaining processing time commences processing at rate one.  If there are multiple jobs with the smallest remaining processing time, the one among those with the smallest index commences processing at rate one.   When there are no jobs in system, no processing takes place and the system is said to be empty.  A job that arrives to an empty system commences processing at rate one upon its arrival.

In order to track the system state, it is necessary to know the remaining processing time of each job in system.  A standard state descriptor for the SRPT queue is a measure in $\bM$ that consists of unit atoms located at the remaining processing time of each job in system.  In particular, for $t\ge 0$, let
\begin{equation}\label{def:mvsd}
\cQ_t=\sum_{i=-q_0+1}^{E(t)} \delta_{v_i(t)}^+,
\end{equation}
where $\delta_x^+\in\bM$ is a unit atom at $x$ if $x\in(0,\infty)$ and is the zero measure otherwise and $v_i(t)$ is the remaining processing time of job $i$  at time $t$ for $i=-q_0+1,-q_0+2,\dots, E(t)$.  Then $\cQ_\cdot$ is the measure valued state descriptor process.  The queue length process $Q$ is given by $Q_t=\langle 1,\cQ_t\rangle$ for $t\geq 0$ and the workload process is given by $\langle \iota , \cQ_t\rangle$ for $t\geq 0$, where $\iota(x):=x$ for all $x\in\R_+$.

\subsection{Scaling Limit Theorems}\label{sec:ScalingLimits}
Fluid, or functional law of large numbers (FLLN), limits for the measure valued state descriptor \eqref{def:mvsd} were established in \cite{Down, Down_Sig}.  The works \cite{Atar, KrukSoko} develop FLLN limits for time-inhomogeneous settings and multiple job classes settings respectively.  The works \cite{Chen, Dong} use SRPT FLLN limits to study scheduling questions.  Heavy traffic functional central limit theorems (FCLT) for \eqref{def:mvsd} are established in \cite{Gromoll}.  To describe these results, we make note that under the standard heavy traffic conditions the FCLT scaled workload processes converge in distribution to a reflecting Brownian motion  (see \cite{Iglehart and Whitt}), which we denote as $\widetilde{W}_{\cdot}(\infty)$ here.
In \cite{Gromoll} for bounded support, it is shown that the FCLT scaling limit of \eqref{def:mvsd} is a point mass at the right edge $\widetilde{x}$ of the support of the processing time distribution with the total mass fluctuating according to $\widetilde{W}_{\cdot}(\infty)$ divided by $\widetilde{x}$, i.e, $\frac{\widetilde{W}_{\cdot}(\infty)}{\widetilde{x}}\delta_{\widetilde{x}}^+$.  In \cite{Gromoll} for unbounded support, it is shown that the FCLT scaling limit is the measure valued process that is identically equal to the zero measure.  In other words in the case of unbounded support, there is an order of magnitude difference between the FCLT scaled workload and queue length processes.

The works \cite{heavy tails, light tails, Puha} investigate this order of magnitude difference more deeply.  Using a distribution dependent scaling factor inspired by the left edge of the support of the fluid limit obtained in \cite{Down, Down_Sig}, the authors correct for the order of magnitude difference observed in \cite{Gromoll} to obtain nontrivial scaling limits for \eqref{def:mvsd} in the case of unbounded support.  Specifically, in \cite{heavy tails, light tails}, the authors consider a sequence of systems indexed by a sequence $\mathcal{R}=\{r\}$ of positive parameters that tend to infinity and satisfy the standard heavy traffic conditions as well as other mild conditions and such that all systems in the sequence have a common processing time distribution with CDF $F$ satisfying $F(x)<1$ for all $x\in\R_+$.  They define
$$
S(x)=\frac{1}{\int_x^\infty y dF(y)}\quad\hbox{for $x\in\R_+$ and}\quad S^{-1}(y)=\inf\{ x\geq 0 : S(x)>y\},\quad\hbox{for $y\in\R_+$.}
$$
Then, for each $r\in\mathcal{R}$, they set $c^r=S^{-1}(r)$ and define
\begin{equation}\label{eqn:ddsmvsd}
\widetilde{\cQ}_t^{r}=\frac{c^r}{r}\sum_{i=-q_0^r+1}^{E^r(r^2t)} \delta_{\frac{v_i^r(r^2t)}{c^r}}^+,\qquad t\ge 0.
\end{equation}
This is standard FCLT scaling, but with the addition of a boosting factor of $c^r$ to prevent the total mass from vanishing in the limit.  In addition, mass at $x$ is relocated to $x/c^r$ to prevent it from drifting out to infinity in the limit.  With this distribution depending scaling and other natural asymptotic conditions, scaling limits are obtained in \cite{heavy tails, light tails} for two separate cases described below.

In \cite{light tails}, the processing time CDF $F$ is assumed to have light tails.  Specifically, $F$ is assumed to satisfy that for all $c>1$
\begin{equation}\label{eqn:lt}
\lim_{x\to\infty} \frac{1-F(cx)}{1-F(x)}=0.
\end{equation}
Distributions such as the exponential and more generally the Weibull distribution satisfy \eqref{eqn:lt}.  In \cite{light tails} under condition \eqref{eqn:lt}, the authors show a sharp concentration of mass at one as $r\to\infty$ in \eqref{eqn:ddsmvsd} such that the scaling limit is given by $\widetilde{W}(\infty)\delta_1^+$.  We remark that the stationary behavior of this scaling limit as well as the one for bounded support is straightforward to recover from that of the process $\widetilde{W}_{\cdot}(\infty)$.  

\subsection{Scaling Limits for Heavy Tails}\label{sec:ht}
The work here applies to the scaling limit obtained in \cite{heavy tails} where the processing time CDF $F$ is assumed to have heavy tails.  Specifically, $F$ is assumed to satisfy that for some $p>1$ and for all $c>1$
\begin{equation}\label{eqn:ht}
\lim_{x\to\infty} \frac{1-F(cx)}{1-F(x)}=c^{-p-1}.
\end{equation}
Distributions such as the Pareto with suitable parameters satisfy \eqref{eqn:ht}.
In \cite{heavy tails} under the condition \eqref{eqn:ht}, the authors show that as $r\to\infty$ in \eqref{eqn:ddsmvsd} the limiting mass disperses in a manner governed by a certain time-space random field $(\widetilde{W}_t(a), a,t\ge 0)$.
More specifically, let
$\tilde \lambda=\lim_{r\to\infty}\lambda^r$ denote the limiting arrival rate, $\tilde\sigma_a=\lim_{r\to\infty}\sigma_a^r$ denote the limiting interarrival standard deviation and $\kappa=\lim_{r\to\infty}r(1-\lambda^r\mathbb{E}[v_1])$. Here we require $\kappa\in(0,\infty)$, and make note that $\tilde\lambda=1/\mathbb{E}[v_1]$.  Then set
\begin{equation}\label{def:SRPTmu}
\widetilde{\mu}(a)=\begin{cases}\kappa + \tilde\lambda a^{-p},& a\in(0,\infty),\\ \kappa,& a=\infty.\end{cases}
\end{equation}
Note that \eqref{def:SRPTmu} satisfies \eqref{ass:mu} since $p>1$ implies that $\int_0^1 \frac{1}{a^2\widetilde{\mu}(a)} da<\infty$ and $\kappa>0$
implies that $\int_1^\infty \frac{1}{a^2\widetilde{\mu}(a)} da<\infty$.
Also define $\tilde\sigma=\sqrt{\tilde\lambda((\tilde\sigma_a)^2+(\sigma_s)^2)}$ and, for $t\geq 0$, let
\begin{equation}\label{def:tildeX}
\widetilde{X}_t(a)=\begin{cases}0,&\hbox{for }a=0,\\
\tilde\sigma B(t)-\widetilde{\mu}(a)t, &\hbox{for }a\in(0,\infty].
\end{cases}
\end{equation}
Then, for $w\in\cI$, $a\in\overline\R_+$ and $t\geq 0$,
$\widetilde{W}_t(a)=\Psi[\widetilde{\chi}(a)](t)$, where $\widetilde{\chi}_s(a)=w(a)+ \widetilde{X}_s(a)$ for all $s\geq 0$.
For $t\geq 0$, let $\widetilde{\cZ}_t$ be given by \eqref{def:cZ0}--\eqref{def:Zt} with $W_t(\cdot)=\widetilde{W}_t(\cdot)$.  Then \cite[Theorem 3]{heavy tails} provides conditions under which the measure valued process $\widetilde{\cQ}_\cdot^{r}$ converges in distribution to $\widetilde{\cZ}_\cdot$ as $r\to\infty$ in $\D([0,\infty),\bM)$.  We refer to
$((\widetilde{W}_t(\cdot),\widetilde{W}_t(\infty),\widetilde{\cZ}_t),t\geq 0)$ as the {\it SRPT scaling limit for heavy tails}.

Perhaps it is worth noting that, due to the negative drift $-\kappa$, Proposition \ref{thm:hitszero} implies that the zero measure $\0$ is point recurrent for $\widetilde\cZ_\cdot$, demonstrating that in this case it reaches the state corresponding to no jobs in the system in finite time for all initial conditions $w \in \cI$. Intuitively, in this heavy traffic scaling limit with negative drift $-\kappa$, the SRPT strategy repeatedly  empties the system.

\subsection{The Stationary Behavior of Scaling Limits for Heavy Tails}\label{sec:SRPTthms}

To begin for $t\geq 0$ and for each $a\in\overline\R_+$, let
\begin{equation}\label{def:tildeM*}
\widetilde{M}_*(a):=
\sup_{t\geq 0}\widetilde{X}_t(a).
\end{equation}
Then let $\cMtildestar\in\bM$ be defined as in \eqref{def:cM*0}, \eqref{def:cM*} and \eqref{def:cM*TotMass} with $M_*(\cdot)=\widetilde{M}_*(\cdot)$. Specializing the result in Theorem \ref{thm:WStat} to SRPT scaling limits for heavy tails and using that $\tilde\mu(\infty)=\kappa>0$,
we obtain the following corollary.

\bigskip
\begin{cor} \label{cor:WStat}
For all $w\in\mathcal{I}$,
$(\widetilde{W}_t^w,\widetilde{W}_t^w(\infty),\widetilde{\cZ}_t^w)\convdist(\widetilde{M}_*, \widetilde{M}_*(\infty), \cMtildestar)$ in $\bC(\mathbb{R}_+)\times \mathbb{R}_+\times\bM$  as $t\to\infty$.  In particular, the distribution of
$(\widetilde{M}_*(\cdot), \widetilde{M}_*(\infty), \cMtildestar)$ is the unique stationary distribution of the SRPT scaling limit for heavy tails.
\end{cor}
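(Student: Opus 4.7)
The approach is to identify $(\widetilde{W}_t^w,\widetilde{W}_t^w(\infty),\widetilde{\cZ}_t^w)$ as an instance of the RCBM and its associated measure valued process defined in Section \ref{sec:DiffModel}, specialized to standard deviation $\sigma=\tilde\sigma$ and drift function $\mu=\widetilde{\mu}$ of \eqref{def:SRPTmu} (via \eqref{def:tildeX}). With this identification, the convergence claim reduces to Theorem \ref{thm:WStat}(ii), so the only thing to verify is that $\widetilde{\mu}$ satisfies the hypotheses of that theorem.

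The verification is routine. From \eqref{def:SRPTmu}, $\widetilde{\mu}(a)=\kappa+\tilde\lambda a^{-p}$ on $(0,\infty)$ is continuous and strictly decreasing with $\lim_{a\to 0^+}\widetilde{\mu}(a)=\infty$, and $\widetilde{\mu}(\infty)=\kappa>0$. Condition \eqref{ass:mu} is already recorded just after \eqref{def:SRPTmu}: near zero one has $1/(a^2\widetilde{\mu}(a))\le a^{p-2}/\tilde\lambda$, which is integrable because $p>1$, and near infinity $1/(a^2\widetilde{\mu}(a))\le 1/(\kappa a^2)$, which is integrable because $\kappa>0$. Since $\widetilde{\mu}(\infty)=\kappa>0$, the strongest joint form of Theorem \ref{thm:WStat}(ii) applies, yielding, for every $w\in\cI$, the claimed convergence in $\bC(\R_+)\times\R_+\times\bM$.

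For the uniqueness assertion, I would argue along standard lines. The process $\widetilde{W}_\cdot^w$ is Markov because the Skorokhod map can be restarted at any time $t$ with state $\widetilde{W}_t^w$ and Brownian increments after $t$, so the transition depends only on the current state. If $\pi$ is any stationary law supported on $\cI$, then starting from $\pi$ keeps the time-$t$ marginal equal to $\pi$ for every $t$, while Fubini applied to the pointwise-in-$w$ convergence of Theorem \ref{thm:WStat}(ii) forces it to weakly approach the law of $(\widetilde{M}_*,\widetilde{M}_*(\infty),\cMtildestar)$; hence $\pi$ equals that limit. Invariance of the limit law, in turn, is obtained by passing $t\to\infty$ inside the Markov identity $(\widetilde{W}_{t+s}^w,\dots)\eqdist(\widetilde{W}_s^{\widetilde{W}_t^w},\dots)$, using continuity of the $s$-shift in the initial condition supplied by the Lipschitz estimate \eqref{eq:Lip} for the Skorokhod map applied coordinatewise in $a$.

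I anticipate no substantive obstacle; the corollary is essentially a repackaging of Theorem \ref{thm:WStat}(ii). The only mild point of care is confirming that $\widetilde{M}_*(\cdot)$ almost surely belongs to $\cI$ so that it is a legitimate initial condition, and this follows from its continuity, monotonicity, $\widetilde{M}_*(0)=0$, $\widetilde{M}_*(\infty)<\infty$ almost surely (since $\kappa>0$), and the integrability $\int_0^\infty \widetilde{M}_*(x)/x^2\,dx<\infty$ implicit in the membership $\cMtildestar\in\bM$ already contained in Theorem \ref{thm:WStat}(ii).
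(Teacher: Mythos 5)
Your proposal is correct and follows essentially the same route as the paper: the corollary is obtained by specializing Theorem \ref{thm:WStat}(ii) to $\sigma=\tilde\sigma$ and $\mu=\widetilde{\mu}$, checking that $\widetilde{\mu}$ satisfies \eqref{ass:mu} and that $\widetilde{\mu}(\infty)=\kappa>0$. Your additional sketch of the uniqueness argument via the Markov property is more explicit than the paper, which treats uniqueness as an immediate consequence of the convergence from every $w\in\cI$, but it is consistent with what the paper intends.
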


Specializing the result in Corollary \ref{cor:MCov} to $(\widetilde{M}_*(\cdot),\widetilde{M}_*(\infty))$ gives the following covariance formula, where  $a_1/\infty=0$ by convention.
\bigskip
\begin{cor}\label{cor:tildeMCov}
For $0< a_1<a_2\leq \infty$,
$$
\hbox{Cov}(\widetilde{M}_*(a_1),\widetilde{M}_*(a_2)-\widetilde{M}_*(a_1))=
\frac{\tilde\sigma^4\tilde\lambda a_1^{2p}}{4(\kappa a_1^p+\tilde\lambda)^3}\left( 1- \frac{a_1^{p}}{a_2^p}\right).
$$
\end{cor}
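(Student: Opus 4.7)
The plan is to deduce this formula directly from the equivalent form of Corollary \ref{cor:MCov} stated immediately after it, namely
\[
\hbox{Cov}(M_*(a_1),M_*(a_2)-M_*(a_1)) = \frac{\sigma^4}{4\mu(a_1)^2}\left(1-\frac{\mu(a_2)}{\mu(a_1)}\right),
\]
valid whenever $0<a_1<a_2\leq\infty$ and $\mu(a_2)>0$. To apply this in the SRPT setting, I would first verify the hypothesis: under \eqref{def:SRPTmu}, $\widetilde{\mu}(a)=\kappa+\tilde\lambda a^{-p}$ for $a\in(0,\infty)$ and $\widetilde{\mu}(\infty)=\kappa$, so $\widetilde{\mu}(a_2)\geq\kappa>0$ holds for every $a_2\in(0,\infty]$. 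Hence the corollary applies with $\sigma=\tilde\sigma$ and $\mu=\widetilde\mu$.

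The remainder is algebraic simplification. First I would compute
\[
1-\frac{\widetilde\mu(a_2)}{\widetilde\mu(a_1)} = \frac{\widetilde\mu(a_1)-\widetilde\mu(a_2)}{\widetilde\mu(a_1)} = \frac{\tilde\lambda\,(a_1^{-p}-a_2^{-p})}{\kappa+\tilde\lambda a_1^{-p}},
\]
where the convention $a_1^p/\infty=0$ (equivalently $a_2^{-p}=0$ when $a_2=\infty$) handles the endpoint case uniformly. Substituting into the covariance identity yields
\[
\hbox{Cov}(\widetilde M_*(a_1),\widetilde M_*(a_2)-\widetilde M_*(a_1)) = \frac{\tilde\sigma^4\,\tilde\lambda\,(a_1^{-p}-a_2^{-p})}{4\,(\kappa+\tilde\lambda a_1^{-p})^3}.
\]

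The final step is cosmetic: multiplying numerator and denominator by $a_1^{3p}$ converts $(\kappa+\tilde\lambda a_1^{-p})^3$ into $(\kappa a_1^p+\tilde\lambda)^3$, while the numerator becomes $\tilde\sigma^4\tilde\lambda\,a_1^{2p}(1-a_1^p/a_2^p)$, giving exactly the stated expression. There is no genuine obstacle here; the only items needing attention are the verification that $\widetilde\mu(a_2)>0$ so that Corollary \ref{cor:MCov} applies, and the uniform treatment of the $a_2=\infty$ case via the stated convention $a_1/\infty=0$.
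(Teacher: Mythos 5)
Your proposal is correct and matches the paper's approach exactly: the paper likewise obtains this corollary by specializing the reformulated version of Corollary \ref{cor:MCov} to $\sigma=\tilde\sigma$ and $\mu=\widetilde\mu$ (noting $\widetilde\mu(a_2)\geq\kappa>0$ and the convention $a_1/\infty=0$), with the same routine algebraic simplification that the paper leaves implicit.
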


As a consequence, we see that for $0<a_1<\infty$,
$$
\lim_{a_2\to a_1^{+}}\frac{\hbox{Cov}(\widetilde{M}_*(a_1),\widetilde{M}_*(a_2)-\widetilde{M}_*(a_1))}{a_2-a_1}= \frac{\tilde\sigma^4\tilde\lambda a_1^{2p-1}p}{4(\kappa a_1^p+\tilde\lambda)^3},
$$
and similarly for $a_1$ increasing to $0<a_2<\infty$.

Continuing to the stationary queue length $\widetilde{Z}_*$ of the SRPT scaling limit for heavy tails,
by Corollary \ref{cor:WStat}, we have
\begin{equation}\label{def:tildeZ*}
\widetilde{Z}_*=\langle 1,\cMtildestar\rangle=\int_0^\infty \frac{\widetilde{M}_*(x)}{x^2}dx.
\end{equation}
The distribution of $\widetilde{Z}_*$ seems to be complex, which is in
contrast to other single server queues such as first come first serve and even SPRT queues with processing time distributions
with bounded support or light tails.  In these cases, the heavy traffic queue length scaling limit process
is a constant multiple of the workload scaling limit.  For processing time distributions with heavy tails,
we find that, even in stationarity, the heavy traffic queue length scaling limit depends on the entire
maximum process $\widetilde{M}_*(\cdot)$.

Finally, we turn our attention to analytically calculate some moments of $\widetilde{Z}_*$.

\bigskip
\begin{cor}\label{cor:tildeZstar}
\begin{equation}
\label{eq:ExptildeZstar}
\E[\widetilde{Z}_*]=
\frac{\tilde\sigma^2}{2\kappa}\cdot\left(\frac{\kappa}{\tilde\lambda}\right)^{1/p}\cdot  \frac{\pi/p}{\sin(\pi/p)}.
\end{equation}
Moreover, if $p\in(1,2)\cup(2,\infty)$
\begin{equation}
\label{eq:VartildeZstar}
\text{Var}[\widetilde{Z}_*]
=
\frac{\tilde\sigma^4}{4\kappa^2}\cdot\left(\frac{\kappa}{\tilde\lambda}\right)^{2/p}\cdot
\frac{p^2+2p+2}{p^2(p+1)}\cdot\frac{\pi/p}{\sin(\pi/p)}\cdot\frac{p-2}{\cos(\pi/p)},
\end{equation}
whereas if $p=2$, $\text{Var}[\widetilde{Z}_*]= \frac{5\tilde\sigma^4}{12\kappa\tilde\lambda}$.
\end{cor}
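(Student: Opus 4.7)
\textbf{Proof Proposal for Corollary \ref{cor:tildeZstar}.}

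The plan is to reduce both moment computations to one-dimensional (resp.\ two-dimensional) integrals of elementary functions of $\widetilde{\mu}(x)=\kappa+\widetilde\lambda x^{-p}$, and then evaluate these using the standard beta-function identity $\int_0^\infty \frac{u^{s-1}}{1+u}\,du=\frac{\pi}{\sin(\pi s)}$, $s\in(0,1)$. For the mean, by \eqref{def:tildeZ*}, Tonelli, and the fact that $\widetilde{M}_*(x)$ is exponential with rate $2\widetilde\mu(x)/\widetilde\sigma^2$ (cf.\ \eqref{eq:M*(a)cdf}),
\begin{equation*}
\E[\widetilde{Z}_*]
=\int_0^\infty\frac{\E[\widetilde M_*(x)]}{x^2}\,dx
=\frac{\widetilde\sigma^2}{2}\int_0^\infty\frac{dx}{x^2\widetilde\mu(x)}
=\frac{\widetilde\sigma^2}{2}\int_0^\infty\frac{x^{p-2}}{\kappa x^p+\widetilde\lambda}\,dx.
\end{equation*}
Applying the substitution $u=\kappa x^p/\widetilde\lambda$ converts this to $(\widetilde\lambda/\kappa)^{1/p}\kappa^{-1}p^{-1}\int_0^\infty u^{-1/p}/(1+u)\,du$; the beta identity (with $s=1-1/p$, using $\sin(\pi-\pi/p)=\sin(\pi/p)$) yields \eqref{eq:ExptildeZstar} after a short rearrangement.

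For the variance, Tonelli and symmetry give
\begin{equation*}
\E[\widetilde Z_*^2]
=2\int_0^\infty\!\!\int_0^y\frac{\E[\widetilde M_*(x)\widetilde M_*(y)]}{x^2 y^2}\,dx\,dy.
\end{equation*}
Writing $\E[\widetilde M_*(x)\widetilde M_*(y)]=\mathrm{Cov}(\widetilde M_*(x),\widetilde M_*(y))+\E[\widetilde M_*(x)]\E[\widetilde M_*(y)]$, the product-of-means part reassembles as $(\E[\widetilde Z_*])^2$, so
\begin{equation*}
\mathrm{Var}(\widetilde Z_*)
=2\int_0^\infty\!\!\int_0^y\frac{\mathrm{Cov}(\widetilde M_*(x),\widetilde M_*(y))}{x^2y^2}\,dx\,dy.
\end{equation*}
Substituting the covariance formula from Corollary \ref{cor:MCov}, the inner $x$-integral over $(0,y)$ splits into the two pieces $\int_0^y x^{-2}\widetilde\mu(x)^{-2}\,dx$ and $\widetilde\mu(y)\int_0^y x^{-2}\widetilde\mu(x)^{-3}\,dx$, each of which is computable in closed form in terms of rational functions of $y^p$ after the substitution $t=\widetilde\lambda/(\kappa y^p+\widetilde\lambda)$ (or equivalently expanding $(\kappa x^p+\widetilde\lambda)^{-k}$ via a single change of variable). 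Plugging these antiderivatives back in produces an outer integral in $y$ whose integrand is a rational combination of powers of $\kappa y^p+\widetilde\lambda$; the substitution $u=\kappa y^p/\widetilde\lambda$ again reduces everything to a finite linear combination of integrals of the form $\int_0^\infty u^{s-1}/(1+u)^k\,du=\frac{(1-s)(2-s)\cdots(k-1-s)}{(k-1)!}\cdot\frac{\pi}{\sin(\pi s)}$, $s\in(0,1)$, $k\in\mathbb N$. Combining these with the trig identity $\sin(2\pi/p)=2\sin(\pi/p)\cos(\pi/p)$ collapses the resulting expression to the stated formula \eqref{eq:VartildeZstar}.

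The main obstacle is the sheer bookkeeping in the variance: each of the two double integrals produces several beta-type terms with different parameters $s$, and the simplification that leads to the compact factor $(p^2+2p+2)(p-2)/[p^2(p+1)\cos(\pi/p)]$ requires careful grouping. The apparent singularity at $p=2$ is a removable one: when $p=2$, $\cos(\pi/p)=0$ and one of the inner antiderivatives in the $x$-integral produces a logarithmic, rather than power, contribution. One way to handle this cleanly is to redo the $p=2$ computation directly (so $\widetilde\mu(x)=\kappa+\widetilde\lambda x^{-2}$ and all substitutions yield elementary integrals of rational functions in $y^2$), obtaining $5\widetilde\sigma^4/(12\kappa\widetilde\lambda)$; alternatively one verifies by l'Hôpital in $p$ that the limit of the general formula agrees. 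I would present the $p\ne 2$ derivation in full and then record the $p=2$ case via the direct computation to avoid any ambiguity around the $0/0$ factor.
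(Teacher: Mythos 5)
Your computation of $\E[\widetilde Z_*]$ is correct and is essentially the paper's: Tonelli plus the exponential marginal reduces the mean to $\frac{\widetilde\sigma^2}{2}\int_0^\infty x^{-2}\widetilde\mu(x)^{-1}dx$, and a power substitution together with $\int_0^\infty u^{s-1}(1+u)^{-1}du=\pi/\sin(\pi s)$ gives \eqref{eq:ExptildeZstar} (your intermediate prefactor $(\widetilde\lambda/\kappa)^{1/p}$ should be $(\kappa/\widetilde\lambda)^{1/p}$, but that is only a slip).

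The variance argument has a genuine gap at the step where you evaluate the inner integral. You integrate the smaller variable $x$ first, over $(0,y)$, and claim that $\int_0^y x^{-2}\widetilde\mu(x)^{-2}dx$ and $\widetilde\mu(y)\int_0^y x^{-2}\widetilde\mu(x)^{-3}dx$ are computable in closed form as rational functions of $y^p$. They are not: with $t=\widetilde\lambda/(\kappa x^p+\widetilde\lambda)$ the first becomes a constant times $\int_{t_y}^1 t^{1/p-1}(1-t)^{1-1/p}\,dt$ with $t_y=\widetilde\lambda/(\kappa y^p+\widetilde\lambda)$, i.e.\ an incomplete beta function, which is elementary only for special $p$ and is never a rational function of $y^p$; the $\widetilde\mu(x)^{-3}$ piece is worse. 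The difficulty is structural: Corollary \ref{cor:MCov} gives $\mathrm{Cov}(\widetilde M_*(x),\widetilde M_*(y))=\frac{\widetilde\sigma^4}{4\widetilde\mu(x)^2}\bigl(2-\widetilde\mu(y)/\widetilde\mu(x)\bigr)$ for $x<y$, which is affine in $\widetilde\mu(y)$ but involves inverse cubes of $\widetilde\mu(x)$. The paper therefore iterates in the opposite order,
\begin{equation*}
\mathrm{Var}(\widetilde Z_*)=2\int_0^\infty\int_a^\infty\frac{\mathrm{Cov}(\widetilde M_*(a),\widetilde M_*(b))}{a^2b^2}\,db\,da,
\end{equation*}
so that the inner $b$-integral is $\frac{1}{\widetilde\mu(a)^3a^2}\int_a^\infty\bigl((2\widetilde\mu(a)-\kappa)b^{-2}-\widetilde\lambda b^{-p-2}\bigr)db$, which is elementary, and only the single remaining $a$-integral requires (complete) beta-function identities. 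Your outline cannot be completed as written; reversing the order of integration is the needed repair, after which the rest of your plan (power substitution, beta identities, $\sin(2\pi/p)=2\sin(\pi/p)\cos(\pi/p)$, and a separate treatment of $p=2$) coincides with the paper's proof.
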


The proof of Corollary \ref{cor:tildeZstar} given in Section \ref{s:eqQ2} relies on Corollary \ref{cor:tildeMCov} and some detailed calculations.

\bigskip
\begin{rem} \label{rem:tildeZstar}
The relationship between moments of stationary queue length and workload SRPT scaling limits for heavy tails depends intimately on the tail decay rate.
As \( p \to 1 \), both \( \mathbb{E}[\widetilde{Z}_*] \) and \( \text{Var}[\widetilde{Z}_*] \) tend to infinity. From \eqref{eq:ExptildeZstar}, \eqref{eq:VartildeZstar} and l'Hôpital's rule,
\[
\lim_{p \to 1} (p-1) \mathbb{E}[\widetilde{Z}_*] =\frac{\tilde\sigma^2}{2\tilde\lambda} \quad \text{and} \quad \lim_{p \to 1} (p-1) \text{Var}[\widetilde{Z}_*] = \frac{5\tilde\sigma^4}{8\tilde\lambda^2},
\]
which shows that the rate of divergence for both the expectation and variance is \( (p-1)^{-1} \) as \( p \to 1 \).
In addition,
\[\lim_{p \to \infty}\E[\widetilde{Z}_*]
=\frac{\tilde\sigma^2}{2\kappa}=\E[M_*(\infty)]  \quad \text{and} \quad \lim_{p \to \infty} \text{Var}[\widetilde{Z}_*]
=\frac{\tilde\sigma^4}{4\kappa^2}=\text{Var}[M_*(\infty)],
\]
which suggests that as the tail becomes less heavy the stationary scaling limit approaches the light tailed scaling limit in some sense.
\end{rem}

\bigskip
\begin{rem}[Little's Law]\label{rem:Lin}
In \cite{Lin}, the authors study the sequence of mean stationary response times for a sequence of SPRT queues with Poisson arrival processes as they approach heavy traffic.  To restate their Theorem 3 here in our notation, let $ G(x)=1-\tilde\lambda/S(x)$ for $x\in\R_+$.
For each $r\in\mathcal{R}$, let $\rho^r=\lambda^r\mathbb{E}[v_1]$, $\kappa^r=r(1-\rho^r)$ and $\mathbb{E}[T^r]$ denote the mean stationary response time for the $r$th system.  By \cite[Theorem 3]{Lin} and the fact that in the case of Poisson arrivals,
$\tilde\sigma_a=1/\tilde\lambda=\mathbb{E}[v_1]$ so that $\tilde\sigma^2=\tilde\lambda\mathbb{E}[v_1^2]$ or equivalently
$\mathbb{E}[v_1^2]=\mathbb{E}[v_1]\tilde\sigma^2$,
$$
\lim_{r\to\infty}(1-\rho^r) G^{-1}(\rho^r)\mathbb{E}[T^r]=\frac{\mathbb{E}[v_1]\tilde\sigma^2}{2}\cdot\frac{\pi/p}{\sin(\pi/p)}
=\kappa\left(\frac{\tilde \lambda}{\kappa}\right)^{1/p}\mathbb{E}[v_1]\mathbb{E}[\widetilde{Z}_*].
$$
The constant $\kappa(\tilde\lambda/\kappa)^{1/p}$ is accounted for by comparing the different scaling factors. Indeed, for each $r\in\mathcal{R}$, we have
$$
\frac{c^r}{r}=\frac{1}{\kappa^r}\cdot\frac{S^{-1}(r)}{S^{-1}(\tilde \lambda r/\kappa^r)}\cdot(1-\rho^r) G^{-1}(\rho^r).
$$
Due to \eqref{eqn:ht}, $S^{-1}$ is regularly varying with index $1/p$.  Then, by \cite[Theorem 1.3.1]{Bingham}, we have
$\lim_{r\to\infty} S^{-1}(r)/S^{-1}(\tilde\lambda r/\kappa^r)= \left(\kappa/\tilde\lambda\right)^{1/p}$.  Thus,
\begin{equation}\label{eq:Lin}
\lim_{r\to\infty}\frac{c^r}{r}\mathbb{E}[T^r]=\mathbb{E}[v_1]\mathbb{E}[\widetilde{Z}_*],
\end{equation}
which is an illustration of Little's Law.  When combine with Corollary \ref{cor:WStat},  \eqref{eq:Lin} suggests that an interchange of limits result might hold, a topic of future work.
\end{rem}

Finally, we turn our attention to convergence of higher moments and obtain the following corollary of Theorem \ref{thm:WStat2}.

\bigskip
\begin{cor}\label{cor:WStat2} Suppose that $\gamma\geq 1$ and $w\in\cI$. 
\begin{itemize}
\item[(i)] For $a\in\overline\R_+$ such that $\mathbb{E}\left[(w(a))^\gamma\right]<\infty$, 
$\lim_{t\to \infty}\E[(\widetilde W_t(a)^\gamma]=\E[(\widetilde M_*(a))^\gamma]$.
\item [(ii)] If $\E\left[\int_0^1\left(\frac{w(a)}{x^{2}}\right)^\gamma dx\right]<\infty$ and either $p\geq 2$ or $p\in (1,2)$ and
$\gamma<\frac{1}{2-p}$, then $\lim_{t\to \infty}\E[(\widetilde Z_t)^\gamma]=\E[(\widetilde Z_*)^\gamma]$.
\end{itemize}
\end{cor}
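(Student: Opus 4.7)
The plan is to specialize Theorem \ref{thm:WStat2} to the SRPT scaling limit by setting $\sigma = \tilde\sigma$ and $\mu(\cdot) = \widetilde\mu(\cdot) = \kappa + \tilde\lambda\,(\cdot)^{-p}$, then verify that the hypotheses of the corollary suffice to trigger its conclusions. Every quantitative ingredient in Theorem \ref{thm:WStat2} has already been established; my task is to check that in the heavy-tailed SRPT regime these ingredients follow from the weaker-looking assumptions stated in the corollary.

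For part (i), the first step is to observe that $\widetilde\mu(a) = \kappa + \tilde\lambda a^{-p} > 0$ for every $a \in (0,\infty)$ and $\widetilde\mu(\infty) = \kappa > 0$, so the positivity requirement $\mu(a) > 0$ in Theorem \ref{thm:WStat2}(i) is automatic on all of $\overline\R_+$. The conclusion $\lim_{t\to\infty}\E[(\widetilde W_t(a))^\gamma] = \E[(\widetilde M_*(a))^\gamma]$ then follows directly from the hypothesis $\E[(w(a))^\gamma] < \infty$.

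For part (ii), I would first note that \eqref{ass:mu} was already verified for $\widetilde\mu$ in the paragraph following \eqref{def:SRPTmu}, so the case $\gamma = 1$ is immediate from the first assertion of Theorem \ref{thm:WStat2}(ii). For $\gamma > 1$, I would verify the two integrability conditions in \eqref{eq:highmoment} by direct asymptotic analysis of $\widetilde\mu$: near zero, $\widetilde\mu(x) \sim \tilde\lambda\, x^{-p}$, so the first integrand behaves like $\tilde\lambda^{-\gamma}\, x^{(p-2)\gamma}$, which is integrable at zero iff $(p-2)\gamma > -1$, equivalently $p \geq 2$ or $p \in (1,2)$ with $\gamma < 1/(2-p)$ --- exactly the dichotomy in the hypothesis; near infinity, $\widetilde\mu(x) \geq \kappa$, so the second integrand is dominated by $(\kappa x)^{-\gamma}$, integrable on $[1,\infty)$ for every $\gamma > 1$. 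It then remains to establish $\E[(Z_0^w)^\gamma] < \infty$ by splitting $Z_0^w = \int_0^1 w(x)/x^2\,dx + \int_1^\infty w(x)/x^2\,dx$ and using $(a+b)^\gamma \leq 2^{\gamma-1}(a^\gamma + b^\gamma)$: the first piece is controlled via Jensen's inequality against Lebesgue measure on $[0,1]$, yielding $\bigl(\int_0^1 w(x)/x^2\,dx\bigr)^\gamma \leq \int_0^1 (w(x)/x^2)^\gamma\,dx$, which is finite in expectation by hypothesis; the second piece is bounded via monotonicity of $w$ by $w(\infty)$.

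The main obstacle is the final bookkeeping step --- extracting $L^\gamma$ control of $w(\infty)$. Since the definition of $\cI$ provides only $\E[w(\infty)] < \infty$, closing this gap will require leveraging monotonicity of $w$ together with the near-zero moment hypothesis (which, via its restriction to any interval bounded away from zero, controls $w(1)$ in $L^\gamma$) and perhaps an implicit additional integrability assumption on the initial data at infinity. Once that reconciliation is in place, the rest is a direct invocation of Theorem \ref{thm:WStat2}(ii), and no new probabilistic ingredients are needed beyond those developed earlier in the paper.
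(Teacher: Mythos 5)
Your approach is exactly the paper's: both parts are obtained by specializing Theorem \ref{thm:WStat2} with $\mu=\widetilde\mu$, using $\widetilde\mu(\infty)=\kappa>0$, the already-verified condition \eqref{ass:mu} for $\gamma=1$, and for $\gamma>1$ the same asymptotic bounds $\widetilde\mu(x)\geq\tilde\lambda x^{-p}$ near zero and $\widetilde\mu(x)\geq\kappa$ near infinity, with the dichotomy $(2-p)\gamma<1$ matching the hypothesis. The one place you go beyond the paper is the attempt to deduce $\E[(Z_0^w)^\gamma]<\infty$ from the corollary's hypothesis: the paper's proof does not address this at all (it silently treats the hypothesis of Theorem \ref{thm:WStat2}(ii) as satisfied), whereas you correctly split $Z_0^w$, handle $\int_0^1$ by Jensen, bound $\int_1^\infty w(x)x^{-2}\,dx$ by $w(\infty)$, and then observe that $\cI$ only furnishes $\E[w(\infty)]<\infty$, not $\E[(w(\infty))^\gamma]<\infty$. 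That loose end is real and cannot be closed from the stated hypotheses (the near-zero moment condition controls $w$ only on $[0,1]$ and says nothing about $w(\infty)$); it reflects an implicit additional assumption in the corollary as stated rather than a defect in your argument, so your proposal is, if anything, more careful than the paper's proof on this point.
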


\begin{proof} The result follows from Theorem \ref{thm:WStat2} since $\tilde\mu(\infty)>0$ and, for $\gamma=1$, we have
\begin{eqnarray*}
\int_0^\infty\frac{1}{x^2\tilde\mu(x)}dx=\int_0^\infty\frac{1}{x^2(\kappa+\tilde\lambda x^{-p})}dx&\leq&
\int_0^1\frac{1}{\tilde\lambda x^{2-p}}dx
+
\int_1^\infty\frac{1}{\kappa x^2}dx<\infty,
\end{eqnarray*}
and, for $\gamma>1$, we have
\begin{eqnarray*}
\int_1^\infty\frac{1}{x^{\gamma}(\kappa+\tilde\lambda x^{-p})^\gamma}dx&\leq& \int_1^\infty\frac{1}{\kappa^\gamma x^{\gamma}}dx<\infty,\\
\int_0^1\frac{1}{x^{2\gamma}(\kappa+\tilde\lambda x^{-p})^\gamma}dx&\leq& \int_0^1\frac{1}{\tilde\lambda^\gamma x^{(2-p)\gamma}}dx<\infty,
\end{eqnarray*}
where finiteness in the second line above follows from $(2-p)\gamma<1$, which holds since either $p\geq 2$ or $p\in (1,2)$ and $\gamma<\frac{1}{2-p}$.
\end{proof}

\section{Proofs}\label{sec:proofs}

\subsection{Proof of Theorems \ref{thm:WStat} and \ref{thm:WStat2}}\label{sec:WStat} 

First we prove Theorem  \ref{thm:WStat}.  For this,
we begin with an analysis of the maximum process $(M_*(\cdot),M_*(\infty))$ (defined in \eqref{def:M*})
and the measure $\cM_*$ (defined in \eqref{def:cM*0}, \eqref{def:cM*} and \eqref{def:cM*TotMass}).
We remind the reader that $M_*(0)=0$, $M_*(a)<\infty$ for each $a\in(0,\infty)$ due to the
negative drift of $X_\cdot(a)$, and $M_*(\infty)<\infty$ if and only if $\mu(\infty)>0$.
Recall the definition of $M_t(\cdot)$ from \eqref{def:Mt}.  For $a\in\overline\R_+$ and $t\geq 0$, $M_t(a)\leq M_*(a)$. This together with \eqref{eq:M*(a)cdf} gives that
for $t\geq 0$,
$$
\mathbb{E}\left[\int_0^\infty \frac{M_t(x)}{x^2}dx\right]\leq \mathbb{E}\left[\int_0^\infty \frac{M_*(x)}{x^2}dx\right]=\int_0^\infty \frac{\mathbb{E}\left[M_*(x)\right]}{x^2}dx=\int_0^\infty \frac{\sigma^2}{2x^2\mu(x)}dx.
$$
Hence, if \eqref{ass:mu} holds,  then, for each $t\geq 0$,
\begin{equation}
\int_0^\infty \frac{M_t(x)}{x^2}dx<\infty\qquad\hbox{and}\qquad Z_*=\int_0^\infty \frac{M_*(x)}{x^2}dx<\infty.\label{eq:cMfinite}
\end{equation}
When \eqref{ass:mu} holds, let $(\cM_t,t\geq 0)$ be the $\bM$ valued stochastic process that satisfies the following: for each $t\ge 0$,
\begin{eqnarray}
\langle 1_{\{0\}},\cM_t\rangle&=&0 \label{def:cMt0} \\
\langle 1_{(0,a]},\cM_t\rangle&=&\int_0^a\frac{M_t(x)}{x^2}dx+\frac{M_t(a)}{a},\qquad\hbox{for }a\in(0,\infty),\label{def:cMta}\\
\langle 1,\cM_t\rangle&=&\int_0^\infty\frac{M_t(x)}{x^2}dx.\label{def:cMtTotMass}
\end{eqnarray}

We will show that $\cM_t\in\bM$ for each $t\geq 0$ and $\cM_*\in\bM$ as part of Lemma \ref{lem:M*} below.  For this the following basic result is used.
\bigskip
\begin{prop}\label{prop:rc} 
Suppose that $f:\R_+\to\R_+$ is continuous, nondecreasing, $f(0)=0$ and $\int_0^1\frac{f(x)}{x^2}dx<\infty$.  Then $\lim_{x\to 0^+}f(x)/x=0$.
\end{prop}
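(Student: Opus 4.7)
The plan is to exploit the monotonicity of $f$ by comparing $f(x)/x$ to the integral of $f(y)/y^2$ over a doubling interval $[x,2x]$. Since $f$ is nondecreasing, for each $0<x\le 1/2$ and every $y\in[x,2x]$ we have $f(y)\ge f(x)$, so
\begin{equation*}
\int_x^{2x}\frac{f(y)}{y^2}\,dy\;\ge\; f(x)\int_x^{2x}\frac{dy}{y^2}\;=\;f(x)\left(\frac{1}{x}-\frac{1}{2x}\right)\;=\;\frac{f(x)}{2x}.
\end{equation*}

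Next, I would invoke absolute continuity of the Lebesgue integral: since $\int_0^1 f(y)/y^2\,dy<\infty$ by hypothesis, the tail integral $\int_0^{2x} f(y)/y^2\,dy$ tends to $0$ as $x\to 0^+$, and hence so does the smaller quantity $\int_x^{2x} f(y)/y^2\,dy$. Combining this with the inequality above yields $\lim_{x\to 0^+} f(x)/(2x)=0$, which gives the claim $\lim_{x\to 0^+} f(x)/x=0$.

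There is no real obstacle here; the only conceptual step is choosing the right localization interval, and the doubling $[x,2x]$ is exactly calibrated so that the indefinite integral $\int_x^{2x} dy/y^2 = 1/(2x)$ cancels the factor of $1/x$ we wish to control. The continuity hypothesis on $f$ is not needed for the argument itself (monotonicity alone suffices to make $f$ Borel measurable and to justify the pointwise lower bound), but it guarantees in particular that $f$ is locally bounded and that the integral in the hypothesis is an honest improper Riemann integral. The hypothesis $f(0)=0$ is automatic from the integrability condition combined with monotonicity, but is listed for convenience.
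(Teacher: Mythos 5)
Your proof is correct and rests on the same key idea as the paper's: lower-bounding the integral of $f(y)/y^2$ over an interval with left endpoint $x$ by $f(x)\int y^{-2}\,dy$ via monotonicity, then letting the integral vanish. The only difference is cosmetic — the paper integrates over $[a,\delta]$ with $\delta$ fixed by an $\varepsilon$-argument (picking up a harmless correction term $f(a)/\delta$), whereas your shrinking interval $[x,2x]$ avoids that term and, as you correctly observe, dispenses with the continuity hypothesis.
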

\begin{proof} Fix $\varepsilon>0$. Let $\delta>0$ be such that $\int_0^\delta \frac{f(x)}{x^2}dx<\varepsilon$.  For all $a\in(0,\delta)$, we have
$$
\varepsilon+\frac{f(a)}{\delta}> \int_a^\delta \frac{f(x)}{x^2}dx+\frac{f(a)}{\delta}\geq f(a)\left(\frac{1}{a}-\frac{1}{\delta}\right)+\frac{f(a)}{\delta}=\frac{f(a)}{a}.
$$
Letting $a\to 0^+$ and using the fact that $\varepsilon>0$ was arbitrary, completes the proof.
\end{proof}

It will be convenient to let $\cTstar=[0,\infty)\cup\{*\}$, which is regarded as an index set.
\bigskip
\begin{lem}\label{lem:M*}
For each $t\in\cTstar$, the process $M_t(\cdot)$ is strictly increasing and continuous with $M_t(0)=0$ and $\lim_{a\to\infty}M_t(a)=M_t(\infty)$ almost surely.  In particular, $\lim_{t\to\infty}\Vert M_*(\cdot)-M_t(\cdot)\Vert_L=0$ for all $L\in(0,\infty)$ and, if $\mu(\infty)>0$, $\lim_{t\to\infty}\Vert M_*(\cdot)-M_t(\cdot)\Vert_\infty=0$.
Moreover, if \eqref{ass:mu} holds, then $\cM_t\in\bM$ for each $t\in\cTstar$ and $\cM_t\xrightarrow{w}\cM_*$ almost surely as $t\to\infty$.
\end{lem}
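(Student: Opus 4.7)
The plan is to argue pathwise throughout, reducing each claim to a property of the jointly continuous random field $(s,a)\mapsto X_s(a)=\sigma B_s-\mu(a)s$. First I would verify the stated structural properties of $M_t(\cdot)$ separately for $t\in[0,\infty)$ and for $t=*$. For finite $t$, the supremum defining $M_t(a)$ is over the compact interval $[0,t]$, so continuity in $a$ follows from $|M_t(a)-M_t(a')|\le t\,|\mu(a)-\mu(a')|$, while $M_t(0)=0$ and $\lim_{a\to\infty}M_t(a)=M_t(\infty)$ follow from $X_s(0)\equiv 0$ and continuity of $\mu$ at $\infty$. Strict monotonicity in $a$ for $t>0$ reduces to showing the supremum is attained at some $\tau\in(0,t]$ almost surely, which follows because $\sigma B_s/s$ is unbounded above as $s\to 0^+$, forcing $M_t(a)>0$ a.s. For $t=*$ the supremum is over the unbounded interval $[0,\infty)$, and the key device is a pathwise localization: given $a_0$ with $\mu(a_0)>0$, continuity of $\mu$ lets me pick $\delta>0$ so that $m:=\mu(a_0+\delta)>0$, and $\sigma B_s/s\to 0$ a.s.\ produces a random $T_0$ with $\sigma B_s\le(m/2)s$ for all $s\ge T_0$. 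For every $a\in(a_0-\delta,a_0+\delta)$ this forces $X_s(a)<0\le M_*(a)$ on $[T_0,\infty)$, so $M_*(a)=M_{T_0}(a)$ throughout the neighborhood, and continuity and strict monotonicity of $M_*$ then follow from the finite-$t$ analysis via joint continuity of $X$ on $[0,T_0]\times[a_0-\delta,a_0+\delta]$. The same localization covers $a_0=\infty$ when $\mu(\infty)>0$; when $\mu(\infty)=0$, the explicit marginal \eqref{eq:M*(a)cdf} gives $\mathbb{P}(M_*(a)>K)\to 1$ as $a\to\infty$ for every $K$, and monotonicity of $M_*$ in $a$ forces $\lim_{a\to\infty}M_*(a)=\infty=M_*(\infty)$ a.s.

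Second, since $M_t(\cdot)\uparrow M_*(\cdot)$ pointwise and all of these functions are continuous, Dini's theorem on a compact set immediately yields $\|M_*-M_t\|_L\to 0$ for every $L<\infty$; when $\mu(\infty)>0$, the extension to the one-point compactification $[0,\infty]$ gives $\|M_*-M_t\|_\infty\to 0$. Under \eqref{ass:mu}, I would then identify $\cM_t$ as the measure $x^{-1}\,dM_t(x)$ on $(0,\infty)$ with $\cM_t(\{0\})=0$: since $M_t$ is continuous and nondecreasing with $M_t(0)=0$ and $\lim_{x\to 0^+}M_t(x)/x=0$ (by Proposition \ref{prop:rc} combined with \eqref{eq:cMfinite}), a Stieltjes integration by parts shows that the values of this measure on $[0,a]$ and on $\R_+$ agree with \eqref{def:cMta} and \eqref{def:cMtTotMass}, while finiteness is immediate from $\langle 1,\cM_t\rangle\le\langle 1,\cM_*\rangle<\infty$ via \eqref{eq:cMfinite}. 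For the weak convergence $\cM_t\xrightarrow{w}\cM_*$, continuity of $M_*$ makes $\cM_*$ atomless, so it suffices to verify $\langle 1_{[0,b]},\cM_t\rangle\to\langle 1_{[0,b]},\cM_*\rangle$ for each $b>0$ together with convergence of the total mass. The former uses the uniform convergence of $M_t$ on $[0,b]$ and dominated convergence with integrable dominator $M_*(x)/x^2$; the latter is a direct monotone convergence applied to $\int_0^\infty M_t(x)/x^2\,dx$.

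The main obstacle is the continuity, and accompanying strict monotonicity, of $M_*(\cdot)$ on $\{a:\mu(a)>0\}$: the defining supremum runs over an unbounded time interval, so joint continuity of $X$ on a compact rectangle is not directly applicable. The pathwise localization above — producing a single random cutoff $T_0$ that is valid uniformly for $a$ in a neighborhood of $a_0$, by exploiting that $\mu$ stays strictly positive on that neighborhood — is the essential device that reduces the problem to the compact case. Once it is in place, Dini's theorem together with standard Stieltjes integration by parts and dominated/monotone convergence arguments delivers the remaining uniform convergence of the processes and the weak convergence of the induced measures.
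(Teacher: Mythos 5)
Your proposal is correct and its overall architecture is the same as the paper's: establish the structural properties of $M_t(\cdot)$ for each $t\in\cTstar$, invoke Dini's theorem (on $[0,L]$, and on the compactification $[0,\infty]$ when $\mu(\infty)>0$) for the uniform convergence, identify $\cM_t$ via Stieltjes integration by parts together with Proposition \ref{prop:rc} and \eqref{eq:cMfinite} to get $\cM_t\in\bM$, and deduce weak convergence from monotone convergence of the distribution functions using the atomlessness of $\cM_*$. Where you genuinely add value is in the treatment of $M_*(\cdot)$ itself: the paper asserts continuity of $M_*$ on $(0,\infty)$ in one line from the continuity of $\mu$, which is not immediate since the supremum runs over an unbounded time interval, and your pathwise localization (a random $T_0$, uniform over a neighborhood of $a_0$ on which $\mu$ is bounded below by $m>0$, after which $X_s(a)<0\le M_*(a)$, so $M_*=M_{T_0}$ locally) is exactly the missing reduction to the compact case. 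Likewise, the paper's proof only argues that $M_t(\cdot)$ is \emph{nondecreasing}, whereas your argument via an a.s.\ attained maximizer $\tau>0$ (using that $\limsup_{s\to 0^+}\sigma B_s/s=+\infty$ on a single null set) actually delivers the \emph{strict} monotonicity claimed in the statement, for $t>0$ and $t=*$. Two small points to tidy: your Lipschitz bound $|M_t(a)-M_t(a')|\le t|\mu(a)-\mu(a')|$ degenerates as $a\to 0^+$ (where $\mu$ blows up), so continuity of $M_t$ at $a=0$ needs the separate observation that $\lim_{a\to 0^+}M_t(a)=0$ because the drift diverges (the paper dispatches this in one line, and the argument is a routine split of the supremum near and away from $s=0$); and note that strict monotonicity necessarily excludes $t=0$, since $M_0\equiv 0$.
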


\begin{proof}
Let $t\in\cTstar$.
By definition \eqref{def:Mt} if $t\in[0,\infty)$ and definition \eqref{def:M*} if $t=*$ and the fact that $-\mu(\cdot)$ is strictly increasing and continuous on $(0,\infty)$, $M_t:(0,\infty)\to\mathbb{R}_+$ is nondecreasing and continuous on $(0,\infty)$.  Since $X_s(0)=0$ for all $s\in[0,\infty)$, then $M_t(0)=0$.
Since $-\lim_{a\to 0^+}\mu(a)=-\infty$, $\lim_{a\to 0^+}M_t(a)=0$. 
Then $M_t:\R_+\to\R_+$ is nondecreasing and continuous on $\R_+$.
Since $-\lim_{a\to \infty}\mu(a)=-\mu(\infty)\leq 0$, $\lim_{a\to \infty}M_t(a)=M_t(\infty)$.

Since $(M_t(\cdot), t\geq 0)$ is a monotonically increasing sequence of continuous functions that converge pointwise to the continuous function $M_*(\cdot)$ as $t\to\infty$, uniform convergence on compact intervals follows from Dini's theorem.  Uniform convergence on $\R_+$ follows when $\mu(\infty)>0$ since $M_*(\infty)<\infty$ in that case.

Finally, assume that \eqref{ass:mu} holds.  Then due to \eqref{eq:cMfinite}, the integrals in \eqref{def:cMta} and \eqref{def:cMtTotMass} are finite for each $t\in[0,\infty)$ as are the integrals in \eqref{def:cM*} and \eqref{def:cM*TotMass}.
For each $a\in\overline\R_+$ and $t\in\cTstar$, let $H_t(a)=\langle 1_{[0,a]},\cM_t\rangle$.  Then, due to  \eqref{eq:cMfinite} and Proposition \ref{prop:rc}, $H_t(\cdot)$ is right continuous for each $t\in\cTstar$.  Moreover, upon applying integration by parts to the integrals in \eqref{def:cM*} and \eqref{def:cMta}, we find that for all $t\in\cTstar$, $a\in\R_+$ and $\varepsilon>0$, $H_t(a+\varepsilon)-H_t(a)=\int_a^{a+\varepsilon}\frac{dM_t(x)}{x}dx\geq 0$.  Then $\cM_t\in\bM$ for each $t\in\cTstar$. By the definitions of $(\cM_t, t\geq 0)$ and $\cM_*$ and the monotone convergence theorem, $\lim_{t\to\infty}H_t(a)=H_*(a)$ for all $a\in\overline\R_+$.  The ``Moreover" statement follows from this.
\end{proof}

In preparation for proving Theorem \ref{thm:WStat}, we first consider $(W_t^0(\cdot),W_t^0(\infty))$,
the state of the RCBM at time $t$ starting from the empty state, i.e., $w(\cdot)=0(\cdot)$ and prove the following lemma.
Henceforth, $\eqdist$ denotes equal in distribution.
\bigskip
\begin{lem}\label{lem:EqualDist}
For each fixed $t\ge 0$, $(W_t^0(\cdot),W_t^0(\infty))\eqdist(M_t(\cdot),M_t(\infty))$
and if \eqref{ass:mu} holds, then $(W_t^0(\cdot),W_t^0(\infty),\cZ_t)\eqdist(M_t(\cdot),M_t(\infty),\cM_t)$.
In particular, Theorem \ref{thm:WStat} holds with $w(\cdot)=0(\cdot)$.
\end{lem}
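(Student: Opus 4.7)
The plan is to exploit time reversal of Brownian motion on $[0,t]$ for each fixed $t$. Since $w \equiv 0$, we have $\chi_s(a) = X_s(a)$, and because $X_0(a) = 0$ the infimum in the Skorokhod map \eqref{skorokhod} is automatically nonpositive, so
\begin{equation*}
W_t^0(a) \;=\; X_t(a) - \inf_{0 \leq s \leq t} X_s(a) \;=\; \sup_{0 \leq s \leq t}\bigl[\sigma(B_t - B_s) - \mu(a)(t-s)\bigr].
\end{equation*}
Substituting $u = t - s$ and setting $\hat B_u := B_t - B_{t-u}$ for $u \in [0,t]$, this becomes $W_t^0(a) = \sup_{0 \leq u \leq t}[\sigma \hat B_u - \mu(a) u]$. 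The crucial observation is that $(\hat B_u)_{0 \leq u \leq t}$ has the same law as $(B_u)_{0 \leq u \leq t}$ as a \emph{process}, and that the \emph{same} time-reversed path $\hat B$ drives all values of $a \in \overline\R_+$ simultaneously. This yields the joint equality in distribution $(W_t^0(a), a \in \overline{\R}_+) \eqdist (M_t(a), a \in \overline{\R}_+)$, which is the first assertion.

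For the joint statement with $\cZ_t$, I observe that $\cZ_t$ is a deterministic, measurable functional of the sample path $a \mapsto W_t^0(a)$ through the integral formulas \eqref{def:cZ0}--\eqref{def:Zt}, while $\cM_t$ is the \emph{same} functional applied to $a \mapsto M_t(a)$ through \eqref{def:cMt0}--\eqref{def:cMtTotMass}. Since measurable maps preserve equality in distribution, the triple identity $(W_t^0(\cdot), W_t^0(\infty), \cZ_t) \eqdist (M_t(\cdot), M_t(\infty), \cM_t)$ follows immediately from the process-level identity above.

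For the ``in particular'' clause, I invoke Lemma \ref{lem:M*}: as $t \to \infty$, $M_t(\cdot) \to M_*(\cdot)$ almost surely, uniformly on compacts (and uniformly on all of $\R_+$ when $\mu(\infty) > 0$), and $\cM_t \xrightarrow{w} \cM_*$ almost surely. Joint a.s.\ convergence implies joint convergence in distribution in the product topology on $\bC(\R_+) \times \R_+ \times \bM$. Combining this with the distributional identity just established gives Theorem \ref{thm:WStat} under the zero initial condition.

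The only subtlety is conceptual rather than computational: one must state and use the time-reversal identity at the process level, so that a single Brownian path couples every $a$, rather than as a family of marginal identities in $a$. Once this is set up, the measurability of the map from $W_t^0(\cdot)$ to $\cZ_t$ is immediate from the integral formulas, and the reduction to Lemma \ref{lem:M*} is routine. I do not anticipate any genuine technical obstacle.
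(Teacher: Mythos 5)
Your proposal is correct and follows essentially the same route as the paper: a process-level time reversal $\hat B_u = B_t - B_{t-u}$ giving $(W_t^0(\cdot),W_t^0(\infty))\eqdist(M_t(\cdot),M_t(\infty))$, the observation that $\cZ_t$ and $\cM_t$ are the same measurable functional of these processes, and an appeal to Lemma \ref{lem:M*} for the ``in particular'' clause. No gaps.
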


\begin{proof}
Fix $t\ge 0$.  Define
$$
B_s^t=\begin{cases} B_t-B_{t-s},&\hbox{for }s\in[0,t],\\ B_s,&\hbox{for }s>t.\end{cases}
$$
Then $(B_s^t, s\geq 0)$ is a standard Brownian motion.  For each $a\in(0,\infty]$,
\begin{eqnarray*}
W_t^0(a)
&=&X_t(a)-\inf_{s\in[0,t]}X_s(a)=\sup_{s\in[0,t]}\left(X_t(a)-X_s(a)\right)=\sup_{s\in[0,t]}\left(X_t(a)-X_{t-s}(a)\right)\\
&=&\sup_{s\in[0,t]}\left(\sigma \left(B_t-B_{t-s}\right)-\mu(a)s\right)=\sup_{s\in[0,t]}\left(\sigma B^t_s-\mu(a)s\right).
\end{eqnarray*}
Since $\left(\sup_{s\in[0,t]}\left(\sigma B^t_s-\mu(a)s\right),a\in(0,\infty]\right)\eqdist\left(M_t(a),a\in(0,\infty]\right)$, it follows that $\left(W_t^0(a),a\in(0,\infty]\right)\eqdist\left(M_t(a),a\in(0,\infty]\right)$.  Since $W_t^0(0)=0=M_t(0)$ and $t\geq 0$ is arbitrary,
the first statement in Lemma \ref{lem:EqualDist} holds.
The result in Theorem \ref{thm:WStat} holds for $w(\cdot)=0(\cdot)$ due to Lemma \ref{lem:M*} and the first statement in Lemma \ref{lem:EqualDist}.
\end{proof}

\begin{proof}[Proof of Theorem \ref{thm:WStat}]
Fix $w\in\cI$.  For each $a\in\overline\R_+$, define $T_a^w:=\inf\{s\ge 0, \chi_s^w(a)=0\}$.  
Then $T_0^w=0$.  Also, for each $a\in(0,\infty]$, since $\mu(a)\geq 0$, $\chi_\cdot^w(a)$ is a Brownian motion with nonpositive drift and nonnegative initial value and $\mathbb{P}(T_a^w<\infty)=1$.

Fix $a\in\overline\R_+$. We begin by observing that $T_a^w$ is a coupling time of processes $W_\cdot^0(a)$ and $W^w_\cdot(a)$, i.e., it is a stopping time satisfying that for all $t\ge T_a^w$, $W_t^0(a)=W^w_t(a)$ almost surely.
To show this, by the definition of $T_a^w$, we have
\begin{eqnarray*}
0&=&\chi_{T_a^w}^w(a)=w(a)+X_{T_a^w}(a),\quad\hbox{so that }X_{T_a^w}(a)=-w(a).
\end{eqnarray*}
Thus, for all $t\ge T_a^w$, $\inf_{s\in[0,t]}X_s(a)\leq -w(a)$.
Hence, for any $t\ge T_a^w$,
\begin{eqnarray*}
W^w_t(a)
&=&w(a)+X_t(a)-\left(w(a)+\inf_{s\in[0,t]}X_s(a)\right)\wedge 0\\
&=&w(a)+X_t(a)-w(a)-\inf_{s\in[0,t]}X_s(a)\\
&=&X_t(a)-\inf_{s\in[0,t]}X_s(a)=X_t(a)-\left(\inf_{s\in[0,t]}X_s(a)\right)\wedge 0=W^0_t(a),
\end{eqnarray*}
as desired.

Next we make note that $T_\infty^w$ is a coupling time for the processes $((W_t^w(\cdot),W_t^w(\infty)),t\geq 0)$
and $((W_t^0(\cdot),W_t^0(\infty)),t\geq 0)$.  This follows because $T_a^w$ is a coupling time for the processes
$W_\cdot^0(a)$ and $W^w_\cdot(a)$  and $T_a^w\leq T_\infty^w$ for each $a\in\overline\R_+$ due to the fact that $\chi_t^w(a)\leq \chi_t^w(\infty)$ for all $t\geq 0$.  Thus, $(W_t^w(\cdot),W_t^w(\infty))=(W_t^0(\cdot),W_t^0(\infty))$ for all $t\geq T_\infty^w$.
For each $\varepsilon>0$, we have
$$
\mathbb{P}\left( \Vert W_t^w(\cdot) -W_t^0(\cdot) \Vert_\infty>\varepsilon\right)\leq \mathbb{P}\left(T_\infty^w>t\right).
$$
Thus, $\Vert W_t^w(\cdot) -W_t^0(\cdot) \Vert_\infty$ converges in probability to zero as $t\to\infty$.  This, together with Lemma \ref{lem:EqualDist} and the converging together lemma, completes the proof of Theorem \ref{thm:WStat} (i).

To verify (ii), assume that \eqref{ass:mu} holds. First we show that $\cZ_t^w\in\bM$ for each $t\ge 0$.
This follows similarly to the proof that $\cM_t\in\bM$ for each $t\geq 0$ once we show that $Z_t^w<\infty$ almost surely for each $t\geq 0$. Due to property \eqref{eq:Lip} of the Skorokhod map, for any $a\in\overline\R_+$ and $t\ge 0$, 
$
\left| W_t^w(a)-W_t^0(a)\right|\le 2\sup_{s\in[0,t]}\left| \chi_s(a)-X_s(a)\right|=2w(a), 
$
which implies that for any $a\in\overline\R_+$ and $t\ge 0$,
\begin{equation}\label{eq:WtwaBnd}
W_t^w(a)\leq 2w(a)+W_t^0(a).
\end{equation}
Thus, for all $t\geq 0$,
\begin{equation}\label{eq:ZtwBnd}
Z_t^w=\int_0^\infty\frac{W_t^w(x)}{x^2} dx\le 2\int_0^\infty\frac{w(x)}{x^2} dx+\int_0^\infty\frac{W_t^0(x)}{x^2} dx=2Z_0^w+Z_t^0.
\end{equation}
Due to \eqref{eq:Z0}, $Z_0^w<\infty$.  In addition for each $t\geq 0$, $Z_0^w$ and $Z_t^0$ are
independent random variables and, by Lemma \ref{lem:EqualDist}, $Z_t^0\eqdist\langle 1,\cM_t\rangle<\infty$ almost surely.
Combining these with \eqref{eq:ZtwBnd} and Proposition \ref{prop:rc} shows that $\cZ_t^w\in\bM$ for all $t\ge 0$.
Then, since $T_\infty^w$ is a coupling time for the processes $((W_t^w(\cdot),W_t^w(\infty),\cZ_t^w),t\geq 0)$
and $((W_t^0(\cdot),W_t^0(\infty),\cZ_t^0),t\geq 0)$, the balance of the result in Theorem \ref{thm:WStat} (ii)
follows similarly to the proof of (i).
\end{proof}

Next we prove Theorem \ref{thm:WStat2}.  For this, we follow a standard approach by establishing convergence in distribution and demonstrating uniform integrability to conclude convergence of the expected values.  In particular, we apply the following basic result.
\bigskip
\begin{prop}\label{prop:ui} Suppose that $(Y_t,t\geq 0)$ is an $\R_+$ valued stochastic process such that
$Y_t\convdist Y$  as $t\to\infty$ and $\mathbb{E}[Y]<\infty$.
If  $\lim_{m\to\infty}\sup_{t\geq 0}\mathbb{E}[Y_t 1_{(Y_t>m)}]=0$, then $\lim_{t\to\infty}\mathbb{E}[Y_t]=\mathbb{E}[Y]$.
\end{prop}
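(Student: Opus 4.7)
The plan is to deduce convergence of expectations from the combination of convergence in distribution and uniform integrability by a standard truncation argument. The key observation is that for each fixed $m > 0$, the map $y \mapsto y \wedge m$ is bounded and continuous on $\R_+$, so weak convergence $Y_t \convdist Y$ immediately yields $\mathbb{E}[Y_t \wedge m] \to \mathbb{E}[Y \wedge m]$ as $t \to \infty$ by the portmanteau theorem. The residual tails $\mathbb{E}[(Y_t - m)^+]$ and $\mathbb{E}[(Y - m)^+]$ can then be controlled uniformly in $t$, respectively in $m$, using the uniform integrability hypothesis and $\mathbb{E}[Y] < \infty$.

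First I would write, for each $m > 0$, the decomposition
\begin{equation*}
\mathbb{E}[Y_t] - \mathbb{E}[Y] = \bigl(\mathbb{E}[Y_t \wedge m] - \mathbb{E}[Y \wedge m]\bigr) + \mathbb{E}[(Y_t - m)^+] - \mathbb{E}[(Y - m)^+].
\end{equation*}
Since $Y_t, Y \geq 0$, the elementary bounds $(Y_t - m)^+ \leq Y_t\, 1_{(Y_t > m)}$ and $(Y - m)^+ \leq Y\, 1_{(Y > m)}$ give
\begin{equation*}
\mathbb{E}[(Y_t - m)^+] \leq \sup_{s \geq 0} \mathbb{E}\bigl[Y_s\, 1_{(Y_s > m)}\bigr], \qquad \mathbb{E}[(Y - m)^+] \leq \mathbb{E}\bigl[Y\, 1_{(Y > m)}\bigr].
\end{equation*}
The first right-hand side tends to zero as $m \to \infty$ directly by hypothesis, and the second tends to zero as $m \to \infty$ by the dominated convergence theorem, using the integrable dominator $Y$ and $\mathbb{E}[Y] < \infty$.

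With these ingredients in hand, I would complete the proof by the usual $\varepsilon/3$ argument: given $\varepsilon > 0$, first choose $m$ large enough that both tail expectations are bounded by $\varepsilon$, and then, for that fixed $m$, invoke the portmanteau theorem applied to the bounded continuous function $y \mapsto y \wedge m$ to find $T$ such that $|\mathbb{E}[Y_t \wedge m] - \mathbb{E}[Y \wedge m]| < \varepsilon$ for all $t \geq T$. Combining yields $|\mathbb{E}[Y_t] - \mathbb{E}[Y]| < 3\varepsilon$ for $t \geq T$, and since $\varepsilon > 0$ was arbitrary, the conclusion follows.

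The argument is standard and I do not anticipate a genuine obstacle. The only subtlety worth flagging is why one must truncate with the continuous map $y \mapsto y \wedge m$ rather than with the indicator $1_{(Y_t \leq m)}$: the latter is discontinuous at $m$ and therefore does not interact directly with weak convergence without an additional assumption that $m$ is a continuity point of the distribution of $Y$. Using $y \wedge m$ circumvents this entirely and keeps the proof free of any atom-avoidance hypothesis on $Y$.
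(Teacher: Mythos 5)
Your proof is correct and follows essentially the same route as the paper: truncate with the bounded continuous map $y\mapsto y\wedge m$, use weak convergence for the truncated expectations, and control the tails via the uniform integrability hypothesis (for $Y_t$) and integrability of $Y$. The paper phrases this as a $\liminf/\limsup$ sandwich rather than an $\varepsilon/3$ decomposition, but the argument is the same.
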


\begin{proof}  We use the inequality $x\wedge m\leq x\leq x\wedge m + x1_{\{x>m\}}$ for $x,m>0$.
Due to $Y_t\convdist Y$  as $t\to\infty$, $\lim_{t\to\infty} \mathbb{E}[Y_t\wedge m]=\mathbb{E}[Y\wedge m]$ for all $m>0$.
Then, for all $m>0$,
$$
\mathbb{E}[ Y\wedge m]\leq \liminf_{t\to\infty}\mathbb{E}[Y_t]
\leq \limsup_{t\to\infty}\mathbb{E}[Y_t]
\leq \mathbb{E}[ Y\wedge m]+\sup_{t\geq 0}\mathbb{E}[Y_t1_{\{Y_t>m\}}].
$$
Letting $m\to\infty$ completes the proof.
\end{proof}

Recall that the gamma function $\Gamma$ is given by $\Gamma[z]=\int_0^\infty t^{z-1}e^{-t} dt$, $z>1$.

\begin{proof}[Proof of Theorem \ref{thm:WStat2}]
Fix $w\in\cI$ and $\gamma\geq 1$.  Note that $\mathbb{E}[(w(a))^\gamma]<\infty$ for all $a\in[0,\infty)$ due to Fubini's theorem, \eqref{def:W0} in case $\gamma=1$ and \eqref{eq:highmoment} in case $\gamma>1$, and the monotonicity of $w(\cdot)$. 

First we prove (i).  Fix $a\in\overline\R_+$ such that $\mu(a)>0$.  We begin by observing that $\E[(M_*(a))^\gamma]=\frac{\Gamma[\gamma+1]\sigma^{2\gamma}}{2^\gamma\mu(a)^\gamma}<\infty$ since $M_*(a)$ has an exponential distribution with rate $2\mu(a)/\sigma^2$ (see \eqref{eq:M*(a)cdf}).   Next, by the continuous mapping theorem and Theorem \ref{thm:WStat}, $(W_t^w(a))^\gamma\convdist (M_*(a))^\gamma$ as $t\to\infty$.  Hence, it suffices to show that $((W_t^w(a))^\gamma,t\geq 0)$ is uniformly integrable.  Due to \eqref{eq:WtwaBnd}, the inequality
$(x+y)^\gamma\leq 2^{\gamma-1}(x^\gamma+y^\gamma)$ for all $x,y\geq 0$ (which follows from Jensen's inequality since $\gamma\geq 1$), $W_t^0(a)\eqdist M_t(a)$ for all $t\geq 0$ (see Theorem \ref{thm:WStat}) and $M_t(a)\leq M_*(a)$ for all $t\geq 0$, it follows that
$2^{2\gamma-1}(w(a))^\gamma+2^{\gamma-1}(M_*(a))^\gamma$ stochastically dominates $(W_t^w(a))^\gamma$ for each $t\geq 0$.  Then, since
$$
\mathbb{E}\left[ 2^{2\gamma-1}(w(a))^\gamma+2^{\gamma-1}(M_*(a))^\gamma\right]=
2^{2\gamma-1}\mathbb{E}\left[ (w(a))^\gamma\right]+2^{\gamma-1}\mathbb{E}\left[(M_*(a))^\gamma\right]<\infty,
$$
uniform integrability follows.

Now, we prove (ii).
By Theorem \ref{thm:WStat} and the continuous mapping theorem $(Z_t^w)^\gamma\convdist (Z_*)^\gamma$ as $t\to\infty$.
Hence, it suffices to show that $((Z_t^w)^\gamma,t\geq 0)$ is uniformly integrable.  Using \eqref{eq:WtwaBnd} and the inequality $(x+y)^\gamma\leq 2^{\gamma-1}(x^\gamma+y^\gamma)$ for all $x,y\geq 0$, for each $t\geq 0$ we have
\begin{eqnarray*}
(Z_t^w)^\gamma&=&\left(\int_0^\infty\frac{W_t^w(a)}{a^2}da\right)^\gamma\le \left(2\int_0^\infty\frac{w(a)}{a^2}da+\int_0^\infty\frac{W_t^0(a)}{a^2}da\right)^\gamma\\
&\leq& 2^{2\gamma-1}\left(\int_0^\infty\frac{w(a)}{a^2}da\right)^\gamma+2^{\gamma-1}\left(\int_0^\infty\frac{W_t^0(a)}{a^2}da\right)^\gamma= 2^{\gamma-1}\left(2(Z_0^w)^\gamma+(Z_t^0)^\gamma\right).
\end{eqnarray*}
Due to Lemma \ref{lem:M*}, $Z_t^0\eqdist\langle 1,\cM_t\rangle$ for all $t\geq 0$.
By the nondecreasing property of $M_t(\cdot)$, $\langle 1,\cM_t\rangle\leq Z_*$ for all $t\geq 0$.  Thus,
$2^{\gamma-1}\left(2(Z_0^w)^\gamma+(Z_*)^\gamma\right)$ stochastically dominates $(Z_t^w)^\gamma$ for each $t\geq 0$.  Hence, in order to show uniform integrability of $((Z_t^w)^\gamma,t\geq 0)$, it suffices to show that $\mathbb{E}[(Z_0^w)^\gamma]<\infty$ and $\mathbb{E}[(Z_*)^\gamma]<\infty$.  The former holds by assumption.  Thus, we must show that
$\mathbb{E}\left[ (Z_*)^\gamma\right]<\infty$.
If $\gamma=1$, then
\begin{eqnarray*}
\mathbb{E}[Z_*]&=&\int_0^\infty\frac{\mathbb{E}[M_*(a)]}{a^2}da=\frac{\sigma^2}{2}\int_0^\infty\frac{1}{\mu(a)a^2}da<\infty,
\end{eqnarray*}
where the final equality follows from \eqref{ass:mu}.
Otherwise, $\gamma>1$.  Then, using the inequality $(x+y)^\gamma\leq 2^{\gamma-1}(x^\gamma+y^\gamma)$ for all $x,y\geq 0$ followed by Holder's inequality with $p=\gamma$ and $q=\gamma/(\gamma-1)$, we have
\begin{eqnarray*}
(Z_*)^\gamma&=&\left(\int_0^1\frac{M_*(a)}{a^2}da+\int_1^\infty\frac{M_*(a)}{a^2}da\right)^\gamma\\
&\leq& 2^{\gamma-1}\left(\int_0^1\frac{M_*(a)}{a^2}da\right)^\gamma+2^{\gamma-1}\left(\int_1^\infty\frac{M_*(a)}{a^2}da\right)^\gamma\\
&=& 2^{\gamma-1}\left(\int_0^1 1\times \frac{M_*(a)}{a^2}da\right)^\gamma+2^{\gamma-1}\left(\int_1^\infty\frac{1}{a}\times \frac{M_*(a)}{a}da\right)^\gamma\\
&\leq& 2^{\gamma-1}\int_0^1\frac{\left(M_*(a)\right)^\gamma}{a^{2\gamma}}da+2^{\gamma-1}\left(\int_1^\infty\frac{1}{a^{\gamma/(\gamma-1)}}da\right)^{\gamma-1}\int_1^\infty \frac{(M_*(a))^\gamma}{a^\gamma}da\\
&=& 2^{\gamma-1}\int_0^1\frac{\left(M_*(a)\right)^\gamma}{a^{2\gamma}}da+\left(2(\gamma-1)\right)^{\gamma-1}\int_1^\infty \frac{(M_*(a))^\gamma}{a^\gamma}da<\infty.
\end{eqnarray*}
This together with Fubini's theorem and \eqref{eq:highmoment} gives
$$
\mathbb{E}\left[ (Z_*)^\gamma\right]
\leq
\frac{\Gamma[\gamma+1]\sigma^{2\gamma}}{2}\left(\int_0^1\frac{1}{(\mu(a))^\gamma a^{2\gamma}}da
+
(\gamma-1)^{\gamma-1}\int_1^\infty \frac{1}{(\mu(a))^\gamma a^\gamma} da \right)<\infty,
$$
which completes the proof.\end{proof}

\subsection{Proof of Theorem \ref{thm:ZStat}}\label{sec:distriproof}
Consider $n\in\N$ and $\vec a\in\cA_n$.
If $a_n = \infty$ and $\mu(\infty) = 0$, then both sides of the equation in
Theorem \ref{thm:ZStat} are equal to $0$. 
Thus, in the remainder of the proof that follows, it suffices to consider $n\in\N$ and $\vec a\in\cA_n$
such that $\mu(a_n) > 0$.

For this, we find it convenient to introduce some new notation. 
For $\nu\in\R$ and $t\ge 0$, let
$$
X_t^{\nu}:=\sigma B_t-\nu t,\qquad
M_t^{\nu}:=\sup_{s\in[0,t]} X_s^{\nu}
\qquad\hbox{and}\qquad
M_*^{\nu}:=\sup_{s\in[0,\infty)} X_s^{\nu}.
$$
Given $n\in\N$ and $\vec a\in\cA_n$, let $\vec \nu\in\R_+^n$ be such that $\nu_i=\mu(a_i)$ for $i=1,2,\dots,n$.
Then, for $i=1,2,\dots,n$, $M_t(a_i)=M_t^{\nu_i}$ for all $t\geq 0$ and $M_*(a_i)=M_*^{\nu_i}$, and with this
convention, for $\vec x\in\R_+^n$,
$$
\P\left( M_*(a_i)\le x_i\hbox{ for }i=1,2,\dots, n\right)
=
\P\left( M_*^{\nu_i}\le x_i\hbox{ for }i=1,2,\cdots, n\right).
$$
which are the probabilities that we wish to compute.  Define
\begin{equation}\label{def:pn}
p_n^{\vec{a}}(\vec{x}):=\P\left( M_*^{\nu_i}\le x_i\hbox{ for }i=1,2,\dots, n\right),\qquad\hbox{for }\vec x\in\R_+^n.
\end{equation}
In order to prove Theorem \ref{thm:ZStat}, it suffices to show that for all
$n\in\N$, $\vec a\in\mathcal{A}_n$, and $\vec{x}\in\cX_n^{\vec a}$
\begin{equation}\label{eq:M*}
p_n^{\vec{a}}(\vec{x})=\P\left( U_{\tau_{n-1}}^*\le x_1\right)-\exp\left(\frac{-2\nu_n}{\sigma^2}\right)\P\left( V_{\tau_{n-1}}^*\le x_1\right),
\end{equation}
where $U^*$ and $V^*$ are as in \eqref{def:U} and \eqref{def:V} with $\mu(a_i)=\nu_i$ for $i=1,2,\dots,n$.

We begin with a lemma which states that establishing \eqref{eq:M*} for all $n\in\N$, $\vec a\in\mathcal{A}_n$, and $\vec{x}\in\cX_n^{\vec a}$ suffices to specify the finite dimensional distributions of $(M_*(\cdot),M_*(\infty))$.
\bigskip
\begin{lem}\label{lem:tau}
Suppose that $n\in\N$, $\vec a\in\mathcal{A}_n$, and $\vec{x}\in\R_+^n$. If $\vec{x}\in\cX_n^{\vec a}$,
then
\begin{equation}\label{imp:tau}
p_n^{\vec{a}}(\vec{x})=\P\left(\sup_{s\in[\tau_{i-1},\tau_i)}X_s^{\nu_i}\le x_i\hbox{ for }i=1,2,\dots, n\right).
\end{equation}
Otherwise, there exists $i'\in\{1,2,\dots,n\}$ such that $p_n^{\vec{a}}(\vec{x})=\P\left(M_*^{\nu_i}\le x_i\hbox{ for }i\neq i'\right)$.
\end{lem}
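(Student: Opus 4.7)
The plan is to translate the lemma into a deterministic statement about the lower envelope of the affine lines $\ell_i(s):=\nu_i s+x_i$ for $i=1,\ldots,n$, where $\nu_i:=\mu(a_i)$. Since $\{M_*^{\nu_i}\le x_i\}=\{\sigma B_s\le \ell_i(s)\text{ for all }s\ge 0\}$, we have $p_n^{\vec a}(\vec x)=\P(\sigma B_s\le \min_{1\le i\le n}\ell_i(s)\text{ for all }s\ge 0)$, so only the lower envelope $s\mapsto \min_i\ell_i(s)$ matters. The key pairwise fact I would use is that, for $i<j$, $\nu_i>\nu_j$ implies $\ell_i(s)\le \ell_j(s)$ iff $s\le \tau_{i,j}:=(x_j-x_i)/(\nu_i-\nu_j)$, with $\tau_i=\tau_{i,i+1}$.

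For the first assertion, assuming $\vec x\in\cX_n^{\vec a}$, I would prove by induction that, on each slab $[\tau_{i-1},\tau_i]$, the minimum of $\ell_1,\ldots,\ell_n$ is attained by $\ell_i$. The base facts $\ell_i\le \ell_{i-1}$ for $s\ge \tau_{i-1}$ and $\ell_i\le \ell_{i+1}$ for $s\le \tau_i$ then extend via the chain $\ell_i\le \ell_{i-1}\le \ell_{i-2}\le\cdots\le \ell_j$ for $j<i$, which is valid on this slab because $s\ge \tau_{i-1}>\tau_{i-2}>\cdots>\tau_j$ by the strict monotonicity of $\tau_0,\tau_1,\ldots,\tau_{n-1}$; the case $j>i$ is symmetric using $\tau_i<\tau_{i+1}<\cdots<\tau_{n-1}$. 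Once the envelope is identified piecewise as $\ell_i$ on $[\tau_{i-1},\tau_i]$, partitioning $[0,\infty)$ into these slabs rewrites the event $\{\sigma B_s\le \min_j\ell_j(s)\text{ for all }s\ge 0\}$ as $\{\sup_{s\in[\tau_{i-1},\tau_i)}X_s^{\nu_i}\le x_i\text{ for }i=1,\ldots,n\}$, which is exactly \eqref{imp:tau}.

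For the second assertion, assuming $\vec x\notin \cX_n^{\vec a}$, some index $k\in\{1,\ldots,n-1\}$ satisfies $\tau_{k-1}\ge \tau_k$ (with the convention $\tau_0=0$). I would locate a redundant index $i'$ in the sense that $\min_{j\ne i'}\ell_j(s)\le \ell_{i'}(s)$ for every $s\ge 0$; dropping the corresponding constraint then leaves $p_n^{\vec a}(\vec x)$ unchanged and equal to $\P(M_*^{\nu_i}\le x_i,\ i\ne i')$. If $\tau_k\le 0$, then $x_{k+1}\le x_k$ combined with $\nu_{k+1}<\nu_k$ forces $\ell_{k+1}\le \ell_k$ pointwise, and $i'=k$ works. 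Otherwise all $\tau_j$ are positive but $\tau_{k-1}\ge \tau_k$ for some $k\ge 2$; here I would split on $s$, noting that for $s\le \tau_{k-1}$ one has $\ell_{k-1}\le \ell_k$, while for $s>\tau_{k-1}\ge \tau_k$ one has $\ell_{k+1}\le \ell_k$, so that $\min(\ell_{k-1},\ell_{k+1})\le \ell_k$ throughout and $i'=k$ again suffices.

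I expect the main obstacle to be the inductive envelope identification in the first part, since the chained inequalities rely on the full strict ordering of the $\tau_j$'s and demand careful bookkeeping at the slab endpoints, including the degenerate case $a_n=\infty$ with $\mu(a_n)=0$ where $\tau_n=\infty$ is only formal; by contrast, the second part reduces to a short interval split once the candidate $i'$ has been read off from the failure pattern of $\vec x\in\cX_n^{\vec a}$.
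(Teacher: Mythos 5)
Your proposal is correct and follows essentially the same route as the paper: reduce the event to $\{\sigma B_s\le\min_i\ell_i(s)\ \forall s\}$, use the pairwise crossing times to identify the lower envelope as $\ell_i$ on $[\tau_{i-1},\tau_i)$ when $\vec{x}\in\cX_n^{\vec a}$, and otherwise exhibit a redundant line. The only (harmless) difference is in the redundancy case: you show directly that $\min(\ell_{k-1},\ell_{k+1})\le\ell_k$ by splitting at $\tau_{k-1}$, whereas the paper normalizes to the first failure index and reuses the envelope formula for the initial segment — your local three-line argument is, if anything, slightly cleaner.
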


\begin{proof} For $i=1,2,\dots, n$, let $\ell_i(s)=\nu_i s + x_i$ for $s\ge 0$.  We have
\begin{eqnarray}
\{ M_*^{\nu_i}\le x_i\hbox{ for }i=1,2,\dots, n\}
&=&\{X_s^{\nu_i}\le x_i\hbox{ for }s\ge 0\hbox{ and }i=1,2,\dots, n\}\nonumber\\
&=&\left\{\sigma B_s\le \min_{i=1,2,\dots,n}\ell_i(s)\hbox{ for all }s\ge 0\right\}.\label{eq:min1}
\end{eqnarray}
Thus, if $\vec{x}\in\mathcal{X}_n:=\{ \vec{x}\in\R_+^n: 0\leq x_1<x_2<\cdots<x_n<\infty\}$, then for all $i\in\{i,\dots,n-1\}$,
we have $\ell_i(0)<\ell_{i+1}(0)$, $\nu_i>\nu_{i+1}$ and
$\ell_i(\tau_i)=\ell_{i+1}(\tau_i)$, and so, 
\begin{equation}\label{eq:min}
\min\{\ell_i(s),\ell_{i+1}(s)\}=
\begin{cases}
\ell_i(s),& s\in[0,\tau_i),\\
\ell_{i+1}(s),& s\in[\tau_i,\infty).
\end{cases}
\end{equation}

Suppose that $\vec{x}\in\cX_n^{\vec a}$. Then \eqref{ass:tau} and \eqref{eq:min} together imply that for $i=1,2,\dots,n$
and $s\in[\tau_{i-1},\tau_i)$
\begin{equation}\label{eq:min2}
\min_{k=1,2,\dots,n}\ell_k(s)=\ell_i(s).
\end{equation}
This together with \eqref{eq:min1} implies that when $\vec{x}\in\cX_n^{\vec a}$,
\begin{eqnarray}
&&\{ M_*^{\nu_i}\le x_i\hbox{ for }i=1,2,\dots, n\}\nonumber\\
&&\qquad=\left\{\sigma B_s \le \ell_i(s) \hbox{ for }s\in[\tau_{i-1},\tau_i)\hbox{ and }i\in\{1,2,\dots,n\} \right\}\nonumber\\
&&\qquad=\left\{X_s^{\nu_i} \le  x_i  \hbox{ for }s\in[\tau_{i-1},\tau_i)\hbox{ and }i\in\{1,2,\dots,n\} \right\},\label{eq:tau1}
\end{eqnarray}
and \eqref{imp:tau} follows.

Next suppose that $\vec{x}\in\mathcal{X}_n\setminus \cX_n^{\vec a}$.  Then there exist $i\in\{1,2,\dots,n-1\}$ and $i'\in\{i+1,i+2,\dots,n\}$ such that $\tau_{i'}\leq \tau_i$, then,
by changing the value of $i'$ if needed, without loss of generality we may assume that $\tau_0<\cdots<\tau_{i'-2}<\tau_{i'-1}$ and  $\tau_{i'}\leq \tau_{i'-1}$ for some $i'\in\{2,3,\dots,n\}$.  By considering $n=i'$ in \eqref{eq:min2}, we have that $\min_{k=1,2,\dots,i'}\ell_k(s)=\min_{k=1,2,\dots,i'-1}\ell_k(s)\hbox{ for }s\in[0,\tau_{i'-1})$.  And, by \eqref{eq:min} with $i=i'$ we have
$\min\{\ell_{i'}(s),\ell_{i'+1}(s)\}=\ell_{i'+1}(s)$ for $s\in[\tau_{i'},\infty)\subseteq[\tau_{i'-1},\infty)$.  Combining these last two statements we find that for all $s\ge 0$
$$
\min_{k=1,2,\dots,i'+1}\ell_k(s)=\min\left(\min_{k=1,2,\dots,i'-1}\ell_k(s),\ell_{i'+1}(s)\right).
$$
Then, for all $s\ge 0$,
$\min_{k=1,2,\dots,n}\ell_k(s)=\min_{k\neq i'}\ell_k(s)$.
This together with \eqref{eq:min1} implies that
\begin{eqnarray*}
\{ M_*^{\nu_i}\le x_i\hbox{ for }i=1,2,\dots, n\}
&=&\left\{\sigma B_s\le \min_{i\neq i'}\ell_i(s)  \hbox{ for }s\ge 0\right\}\\
&=&\left\{X_s^{\nu_i}\le x_i \hbox{ for }s\ge 0\hbox{ and }i\neq i'\right\}\\
&=&\left\{M_*^{\nu_i}\le x_i \hbox{ for }i\neq i'\right\}.
\end{eqnarray*}

Finally, suppose that $\vec{x}\in\R_+^n\setminus\mathcal{X}_n$.  Then there exist $i'\in\{1,2,\dots,n-1\}$ and $j\in\{i'+1,\dots,n\}$ such that $x_j\le x_{i'}$.  Then, since  $\ell_j(0)\le\ell_{i'}(0)$ and $\nu_j<\nu_{i'}$, we have $\ell_j(s)\le\ell_{i'}(s)$ for all $s\ge 0$.  Thus,
$
\{ M_*^{\nu_i}\le x_i\hbox{ for }i=1,2,\dots, n\}=\left\{\sigma B_s\le \min_{i\ne i'}\ell_i(s)  \hbox{ for }s\ge 0\right\}.
$
\end{proof}

Before proceeding to the proof of \eqref{eq:M*}, we establish some notation and a basic fact.
For this, let $\phi$ denote the standard normal density function.
As in Proposition 8.1 in \cite{Harrison}, for $\nu\in\R_+$, $t\ge 0$, $x\in\R_+$ and $u\le x$,
\begin{eqnarray}
f_t^{\nu}(u,x)du
&:=&\mathbb{P}\left(X_t^{\nu}\in du,\ M_t^{\nu}\le x\right)\nonumber\\
&=&
\exp\left(\frac{-2\nu tu-\nu ^2t^2}{2\sigma^2t}\right)\left(\phi\left(\frac{u}{\sigma\sqrt{t}}\right)-\phi\left(\frac{u-2x}{\sigma\sqrt{t}}\right)\right)\frac{du}{\sigma\sqrt{t}}. \label{fact}
\end{eqnarray}
Let $\nu,\alpha\in\R$ and set $\zeta=\nu-2\alpha$.
We claim that for $t\ge 0$, $x\in\R_+$ and $u\le x$,
\begin{equation}\label{eq:basic}
f_t^{\zeta}(u,x)=\exp\left(\frac{2\alpha((\nu-\alpha)t+u)}{\sigma^2}\right)f_t^{\nu}(u,x).
\end{equation}
To see this, fix $t\ge 0$, $x\in\R$ and $u\le x$.
Note that
\begin{eqnarray*}
-2\nu tu-\nu ^2t^2+4\alpha((\nu-\alpha)t+u)t
&=&-2\zeta tu-\zeta^2t^2.
\end{eqnarray*}

\begin{proof}[Proof of \eqref{eq:M*}]
Now, we are ready to prove \eqref{eq:M*} by mathematical induction. Clearly, if $n=1$, \eqref{eq:M*} holds because, in this case, $\tau_{n-1}=0$ so that $\mathbb{P}\left( U_{\tau_{n-1}}^*\le x_1\right)=\mathbb{P}\left( V_{\tau_{n-1}}^*\le x_1\right)=1$.

Next, assume that \eqref{eq:M*} holds for $n=k-1$ with some $k\in\N$ and $k>1$.
Under this assumption, we show that  \eqref{eq:M*}  holds for $n=k$.
Then, by the principle of mathematical induction,  \eqref{eq:M*} holds for all $n\in\N$.

Fix $\vec{\nu}\in\mathcal{V}_k$ and $\vec{x}\in\mathcal{X}_k^{\vec{\nu}}$.  To begin, we rewrite \eqref{def:pn}
using Lemma \ref{lem:tau} as
\begin{equation}\label{def:pn2}
p_k^{\vec{v}}(\vec{x})=
\P\left(M_{\tau_1}^{\nu_i}\le x_1,\ \widehat M_*^{\nu_i}\le x_i\hbox{ for }i=2,\dots, k\right),
\end{equation}
where $\widehat M_*^{\nu_i}:=\sup_{s\in[\tau_1,\infty)}X_s^{\nu_i}$ for $i=2,\dots,k$.
In light of \eqref{def:pn2}, the proof proceeds by restarting the processes at time $\tau_1$ and using the Markov property.  

For this, fix $u\in(-\infty,x_1]$ and consider the event $A(u)=\{X_{\tau_1}^{\nu_1}=u\}$.
On $A(u)$, we have the following:
\begin{alignat*}{2}
X_{\tau_1}^{\nu_2}&=X_{\tau_1}^{\nu_1}+(\nu_1-\nu_2)\tau_1=(\nu_1-\nu_2)\tau_1+u=x_2-x_1+u,&\\
X_{\tau_1}^{\nu_i}
&=X_{\tau_1}^{\nu_1}+(\nu_1-\nu_i)\tau_1
=(\nu_1-\nu_i)\tau_1+u,&\quad\hbox{for }i=3,\dots,k.
\end{alignat*}
Hence, on the event $A(u)$, for $s\ge 0$ and $i=2,\dots,k$, $X_{\tau_1+s}^{\nu_i}\le x_i$ if and only if
\begin{equation}\label{eq:xiu}
X_{\tau_1+s}^{\nu_i}-X_{\tau_1}^{\nu_i}\le \tilde x_i-u,
\end{equation}
where $\tilde x_i= x_i -(\nu_1-\nu_i)\tau_1$ for $i=2,\dots, k$. 
In order to apply the induction hypothesis to the restarted processes
$(X_{\tau_1+s}^{\nu_i}-X_{\tau_1}^{\nu_i}, s\ge 0)$, $i=2,\dots,k$, we need to establish that
$$
0<\tilde x_2-u\quad\hbox{and}\quad \tilde x_i-u<\tilde x_{i+1}-u,\quad\hbox{for }i=2,\dots k-1,
$$
which, on noting that $\tilde x_2=x_1$, is equivalent to
$$
0<x_1-u\qquad\hbox{and}\qquad \tau_1<\frac{x_{i+1}-x_i}{\nu_i-\nu_{i+1}}=
\tau_i,\qquad \hbox{for }i=2,\dots,k-1.
$$
The above holds since $\vec{x}\in\mathcal{X}_{k}^{\vec{\nu}}$ and $u\in(-\infty,x_1)$.

Using the Markov property, \eqref{def:pn2} and \eqref{eq:xiu}, we have
\begin{equation}\label{eq:pnM}
p_k^{\vec{v}}(\vec{x})
=\int_{-\infty}^{x_1} f_{\tau_1}^{\nu_1}(u,x_1)\P\left( M_*^{\nu_i}\le \tilde x_i-u\hbox{ for }i=2,\dots,k\right)du.
\end{equation}
Let $\widetilde\tau=\tau_{k-1}-\tau_1$ and for $t\ge 0$, let $\widetilde U_t:=U_{\tau_1+t}-U_{\tau_1}$,  $\widetilde U_t^*:=\sup_{s\in[0,t]}\widetilde U_s$, $\widetilde V_t:=V_{\tau_1+t}-V_{\tau_1}$ 
and $\widetilde V_t^*:=\sup_{s\in[0,t]}\widetilde V_s$.
Then, using \eqref{eq:pnM}, $\tilde x_2=x_1$, and the induction hypothesis, it follows that
\begin{eqnarray}
p_k^{\vec{v}}(\vec{x})&=&\int_{-\infty}^{x_1} f_{\tau_1}^{\nu_1}(u,x_1)\mathbb{P}\left( \widetilde U_{\widetilde\tau}^*\le x_1-u\right)du\label{eq:pnih1}\\
&&\quad -\int_{-\infty}^{x_1} f_{\tau_1}^{\nu_1}(u,x_1)\exp\left(\frac{-2\nu_k(\tilde x_k-u)}{\sigma^2}\right)\mathbb{P}\left( \widetilde V_{\widetilde\tau}^*\le x_1-u\right)du.\label{eq:pnih2}
\end{eqnarray}
We see that
\begin{eqnarray}
\int_{-\infty}^{x_1} f_{\tau_1}^{\nu_1}(u,x_1)\mathbb{P}\left( \widetilde U_{\widetilde\tau}^*\le x_1-u\right)du
&=&
\int_{-\infty}^{x_1} f_{\tau_1}^{\nu_1}(u,x_1)\mathbb{P}\left( \widetilde U_{\widetilde\tau}^*+u\le x_1\right)du\nonumber\\
&=&\mathbb{P}\left(\widetilde U_{\tau_{k-1}}^*\le x_1\right).\label{eq:pnM1}
\end{eqnarray}
In addition, we have
\begin{eqnarray*}
&&\int_{-\infty}^{x_1}f_{\tau_1}^{\nu_1}(u,x_1)\exp\left(\frac{-2\nu_k(\tilde x_k-u)}{\sigma^2}\right)\mathbb{P}\left( \widetilde V_{\widetilde\tau}^*\le x_1-u\right)du\\
&&\qquad=\exp\left(\frac{-2\nu_kx_k}{\sigma^2}\right)\int_{-\infty}^{x_1}f_{\tau_1}^{\nu_1}(u,x_1)\exp\left(\frac{2\nu_k((\nu_1-\nu_k)\tau_1+u)}{\sigma^2}\right)\mathbb{P}\left( \widetilde V_{\widetilde\tau}^*\le x_1-u\right)du.
\end{eqnarray*}
Due to \eqref{eq:basic} with $\nu=\nu_1$, $\alpha=\nu_k$, $t=\tau_1$, and $x=x_1$, for each $u\in(-\infty,x_1]$, we have
$$
f_{\tau_1}^{\nu_1}(u,x_1)\exp\left( \frac{2\nu_k\left((\nu_1-\nu_k)\tau_1+u\right)}{\sigma^2}\right)=f_{\tau_1}^{\zeta_1}(u,x_1),
$$
where $\zeta_1=\nu_1-2\nu_2$.
Thus,
\begin{eqnarray}
&&\int_{-\infty}^{x_1}f_{\tau_1}^{\nu_1}(u,x_1)\exp\left(\frac{-2\nu_k(\tilde x_k-u)}{\sigma^2}\right)\mathbb{P}\left( \widetilde V_{\widetilde\tau}^*\le x_1-u\right)du\nonumber\\
&&\qquad=\exp\left(\frac{-2\nu_kx_k}{\sigma^2}\right)\int_{-\infty}^{x_1}f_{\tau_1}^{\zeta_1}(u,x_1)\mathbb{P}\left( \widetilde V_{\widetilde\tau}^*\le x_1-u\right)du\nonumber\\
&&\qquad=\exp\left(\frac{-2\nu_kx_k}{\sigma^2}\right)\mathbb{P}\left( \widetilde V_{\tau_{k-1}}^*\le x_1\right).\label{eq:pnM2}
\end{eqnarray}
Then by combining \eqref{eq:pnih1}, \eqref{eq:pnih2}, \eqref{eq:pnM1} and \eqref{eq:pnM2}, we see that \eqref{eq:M*} holds.
\end{proof}


\subsection{Proof of Corollary \ref{cor:MCov}}\label{sec:cov}

We first recall some notation.
For $\nu\in\R$ and $t\ge 0$, let $X_t^{\nu}=\sigma B(t)-\nu t$ and $M_t^{\nu}=\sup_{s\in[0,t]} X_s^{\nu}$. 
Fix $0<\nu_2<\nu_1<\infty$ and $0<x_1<x_2<\infty$.  In order to prove Corollary \ref{cor:MCov}, it suffices to show that 
\begin{equation}\label{eq:cov1}
\hbox{Cov}(M^{\nu_1}_*,M^{\nu_2}_*)=
\frac{\sigma^4}{4\nu_1^2}\left(2-\frac{\nu_2}{\nu_1}\right).
\end{equation}
We have
$\hbox{Cov}(M_*^{\nu_1},M_*^{\nu_2})=\mathbb{E}\left[ M_*^{\nu_1}(M_*^{\nu_2}-M_*^{\nu_1})\right]+\mathbb{E}\left[(M_*^{\nu_1})^2\right]-\mathbb{E}\left[M_*^{\nu_1}\right]\mathbb{E}\left[M_*^{\nu_2}\right]$.
Using \eqref{eq:M*(a)cdf}, we find that
\begin{eqnarray*}
\mathbb{E}\left[(M_*^{\nu_1})^2\right]-\mathbb{E}\left[M_*^{\nu_1}\right]\mathbb{E}\left[M_*^{\nu_2}\right]=\frac{\sigma^4}{4\nu_1^2}\left(2-\frac{\nu_1}{\nu_2}\right).
\end{eqnarray*}
Thus, in order to show \eqref{eq:cov1}, it suffices to show that 
\begin{equation}\label{eq:cov2}
\mathbb{E}\left[ M_*^{\nu_1}(M_*^{\nu_2}-M_*^{\nu_1})\right]
=\frac{\sigma^4}{4\nu_1^2}\left(\frac{\nu_1}{\nu_2}-\frac{\nu_2}{\nu_1}\right)=\frac{\sigma^4}{4\nu_1^3\nu_2}\left(\nu_1^2-\nu_2^2\right).
\end{equation}
In order to verify \eqref{eq:cov2}, we will find the joint probability density function $g$ of
$M_*^{\nu_1}$ and $M_*^{\nu_2}-M_*^{\nu_1}$ and evaluated the integral $\int_0^\infty \int_0^\infty zx g(x,z)dx dz$.
For this, let $\delta_1=\nu_1-\nu_2$.  The next lemma provides the explicit formula for $g$ in terms of $\nu_1$ and $\delta_1$.
\bigskip
\begin{lem}\label{lem:pdfg}
The joint probability density function $g$ of $(M_*^{\nu_1},M_*^{\nu_2}-M_*^{\nu_1})$ is given by the following:
for $x,z>0$,
\begin{eqnarray}
&&g(x,z)=
\phi\left(\frac{x\delta_1+\nu_1 z}{\sigma\sqrt{\delta_1 z}}\right)\frac{2(\nu_1-\delta_1)\sqrt{\delta_1}(x+2z)}{\sigma^3\sqrt{z^3}}\label{eq:pdfg}\\
&&\quad+\frac{4(\nu_1-\delta_1)(2\delta_1-\nu_1)}{\sigma^4}\exp\left(\frac{-2\delta_1x-2(\nu_1-\delta_1) z}{\sigma^2}\right)\Phi\left(\frac{-x\delta_1-(\nu_1-2\delta_1) z}{\sigma\sqrt{\delta_1z}}\right),\nonumber
\end{eqnarray}
and is zero otherwise.
\end{lem}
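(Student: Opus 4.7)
The plan is to obtain $g$ by twice differentiating the joint cumulative distribution function $F(x_1,x_2):=\mathbb{P}(M_*^{\nu_1}\le x_1,\, M_*^{\nu_2}\le x_2)$ given by Theorem~\ref{thm:ZStat_2d}, and then applying the change of variables $(x_1,x_2)\mapsto(x,z)=(x_1,\,x_2-x_1)$. Since $M_*^{\nu_2}\ge M_*^{\nu_1}$ almost surely, $(M_*^{\nu_1},M_*^{\nu_2}-M_*^{\nu_1})$ is supported on $\R_+\times\R_+$, so $g$ vanishes outside this set; and because the Jacobian of the transformation is one, for $x,z>0$,
\[
g(x,z)=\left.\frac{\partial^{2}F(x_1,x_2)}{\partial x_1\,\partial x_2}\right|_{x_1=x,\,x_2=x+z}.
\]

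The concrete computation proceeds by substituting formula \eqref{eq:CDFMax} into \eqref{eq:ZStat_2d} with drift $\nu_1$ for $\mathbb{P}(M_{\tau_1}^{\nu_1}\le x_1)$ and drift $2\delta_1-\nu_1$ for $\mathbb{P}(V_{\tau_1}^*\le x_1)$, noting from \eqref{def:V} that $V_t=\sigma B_t-(2\delta_1-\nu_1)t$. After the change of variables, $\tau_1=(x_2-x_1)/\delta_1$ becomes $t:=z/\delta_1$, depending only on $z$, so $\partial/\partial z$ acts on the $\Phi$ terms purely through $(1/\delta_1)\partial/\partial t$, while $x$ appears directly in the $\Phi$-arguments and in the exponential prefactors. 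I would compute $\partial F/\partial x$ first. Each derivative $\partial\Phi/\partial x$ produces a $\phi$ term, and the two such $\phi$ terms arising within $\mathbb{P}(M_t^{\nu}\le x)$ (for $\nu\in\{\nu_1,\,2\delta_1-\nu_1\}$) combine via the elementary identity
\[
e^{-2\nu x/\sigma^{2}}\,\phi\!\left(\frac{-x+\nu t}{\sigma\sqrt{t}}\right)=\phi\!\left(\frac{x+\nu t}{\sigma\sqrt{t}}\right),
\]
which follows from completing the square, to yield the classical density
$p_{M_t^{\nu}}(x)=\frac{2}{\sigma\sqrt{t}}\phi\!\big(\frac{x+\nu t}{\sigma\sqrt{t}}\big)+\frac{2\nu}{\sigma^{2}}e^{-2\nu x/\sigma^{2}}\Phi\!\big(\frac{-x+\nu t}{\sigma\sqrt{t}}\big)$. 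The product rule on the outer exponential $e^{-2\nu_2 x_2/\sigma^{2}}$ also contributes a $\Phi$-valued term $(2\nu_2/\sigma^{2})e^{-2\nu_2(x+z)/\sigma^{2}}\mathbb{P}(V_t^*\le x)$. Differentiating the resulting expression in $z$ through the chain rule and through the explicit factor $e^{-2\nu_2(x+z)/\sigma^{2}}$ then produces the full second derivative.

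The main obstacle is the bookkeeping needed to consolidate the many intermediate terms into the two-term form \eqref{eq:pdfg}. The $\phi$-contribution on the first line should assemble from the $t$-derivatives of the $\Phi$-arguments in the two density formulas; using $\phi'(u)=-u\phi(u)$ together with a further application of the identity above to unify the $\phi$-arguments to $(x\delta_1+\nu_1 z)/(\sigma\sqrt{\delta_1 z})$ produces the polynomial prefactor $2\nu_2\sqrt{\delta_1}(x+2z)/(\sigma^{3}\sqrt{z^{3}})$. The $\Phi$-contribution on the second line should emerge from the product of $(2\nu_2/\sigma^{2})e^{-2\nu_2(x+z)/\sigma^{2}}$ with the portion of $\mathbb{P}(V_t^*\le x)$ involving $e^{-2(2\delta_1-\nu_1)x/\sigma^{2}}\Phi(\cdot)$; the combined $x$-exponentials collapse to $e^{-2\delta_1 x/\sigma^{2}}$, and the $z$-derivative of $e^{-2\nu_2 z/\sigma^{2}}$ together with the factor $2(2\delta_1-\nu_1)/\sigma^{2}$ from the density of $V_t^*$ produces the prefactor $4\nu_2(2\delta_1-\nu_1)/\sigma^{4}$ with the surviving $\Phi$-argument matching that of \eqref{eq:pdfg}. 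A useful sanity check is to verify that $\int_0^\infty g(x,z)\,dz$ reproduces the exponential density of $M_*^{\nu_1}$ with rate $2\nu_1/\sigma^{2}$ and that $\mathbb{E}[M_*^{\nu_1}(M_*^{\nu_2}-M_*^{\nu_1})]=\int_0^\infty\!\!\int_0^\infty xz\,g(x,z)\,dz\,dx$ recovers Corollary~\ref{cor:MCov}.
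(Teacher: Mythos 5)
Your overall route is the same as the paper's: plug \eqref{eq:CDFMax} into the two-dimensional formula of Theorem \ref{thm:ZStat_2d} (with drifts $\nu_1$ and $\zeta_1=2\delta_1-\nu_1=\nu_1-2\nu_2$), take the mixed partial derivative of the joint CDF, and change variables with unit Jacobian. Your displayed master formula
$g(x,z)=\partial^{2}F/\partial x_1\partial x_2\big|_{x_1=x,\,x_2=x+z}$ is correct, the reflection identity $e^{-2\nu x/\sigma^2}\phi\bigl(\tfrac{-x+\nu t}{\sigma\sqrt t}\bigr)=\phi\bigl(\tfrac{x+\nu t}{\sigma\sqrt t}\bigr)$ is exactly the simplification the paper uses, and your identification of the sources of the two terms in \eqref{eq:pdfg} and the prefactors ($\nu_2=\nu_1-\delta_1$) is consistent with the paper's computation.

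However, your narrated order of operations contains a genuine error that contradicts your own displayed formula. You propose to substitute $x_1=x$, $x_2=x+z$ \emph{first}, so that $\tau_1$ becomes $t=z/\delta_1$, and then let $\partial/\partial x$ act only on the direct $\Phi$-arguments and exponential prefactors while $\partial/\partial z$ acts only through $t$ (and the outer exponential). That operator computes $\partial_z\partial_x\bigl[F(x,x+z)\bigr]=F_{x_1x_2}(x,x+z)+F_{x_2x_2}(x,x+z)$, not the density $F_{x_1x_2}(x,x+z)$; note also that $F(x,x+z)$ is not the joint CDF of $(M_*^{\nu_1},M_*^{\nu_2}-M_*^{\nu_1})$, so differentiating it in $(x,z)$ has no direct probabilistic meaning. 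Concretely, the correct $\partial/\partial x_1$ must act \emph{through} $\tau_1=(x_2-x_1)/\delta_1$ (the paper's key cancellation, where the two $\phi$-terms combine into $\phi(\cdot)\,\tfrac{\partial}{\partial x_1}\tfrac{2x_1}{\sigma\sqrt{\tau_1}}$, depends on this), and it must \emph{not} act on the outer factor $e^{-2\nu_2x_2/\sigma^2}$, whereas your scheme omits the former and includes the latter. Carried out as written, the computation picks up an unwanted $\partial^2/\partial x_2^2$ contribution and will not reduce to \eqref{eq:pdfg}. The fix is simply to do what your displayed formula already says: differentiate in the original variables $(x_1,x_2)$ and only then set $x_1=x$, $x_2=x+z$. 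Your proposed sanity checks (marginalizing $g$ in $z$ to recover the exponential law of $M_*^{\nu_1}$, and recovering Corollary \ref{cor:MCov}) would in fact catch this error.
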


Lemma \ref{lem:pdfg} is proved in Section \ref{sec:pdfs} below.
Given the result in Lemma \ref{lem:pdfg} in order to verify \eqref{eq:cov2}, it suffices to show
\begin{equation}\label{eq:cov3}
\int_0^\infty \int_0^\infty zx g(x,z)dx dz=\frac{\sigma^4}{4\nu_1^3\nu_2}\left(\nu_1^2-\nu_2^2\right).
\end{equation}
In the next section, we evaluate the integral above and verify \eqref{eq:cov3}.

\subsubsection{Verification of \eqref{eq:cov3}}\label{sec:exp}
We begin by calculating $\int_0^\infty xg(x,z)dx$.  We first observe a general fact: for $a>0$ and $b,c\in\R$,
with integration by parts,
\begin{eqnarray*}
&&\int_0^\infty xe^{-ax}\Phi(bx+c)dx=-\frac{1}{a}e^{-ax}x\Phi(bx+c)\Bigg|_0^\infty+\frac{1}{a}\int_0^\infty e^{-ax}d(x\Phi(bx+c))\\
&&\quad=\frac{1}{a}\int_0^\infty e^{-ax}\Phi(bx+c)dx+\frac{1}{a}\int_0^\infty e^{-ax}x\phi(bx+c)b\ dx\\
&&\quad=-\frac{1}{a^2}e^{-ax}\Phi(bx+c)\Bigg|_0^\infty+
\frac{1}{a^2}\int_0^\infty e^{-ax}\phi(bx+c)b dx+\frac{1}{a}\int_0^\infty e^{-ax}x\phi(bx+c)b\ dx\\
&&\quad=\frac{1}{a^2}\Phi(c)+
\int_0^\infty e^{-ax}\phi(bx+c)\left(\frac{1}{a^2}+\frac{x}{a}\right) b\ dx.
\end{eqnarray*}
With $a=2\delta_1/\sigma^2$ and $x>0$, we have
$\frac{1}{a^2}+\frac{x}{a}=\frac{\sigma^2(\sigma^2+2\delta_1x)}{4\delta_1^2}$.
Thus, for $z>0$,
\begin{eqnarray*}
&&\int_0^\infty x\exp{\left(\frac{-2\delta_1x}{\sigma^2}\right)}\Phi\left(\frac{-x\delta_1-(\nu_1-2\delta_1)z}{\sigma\sqrt{\delta_1z}}\right)dx
=\frac{\sigma^4}{4\delta_1^2}\Phi\left(\frac{-(\nu_1-2\delta_1)z}{\sigma\sqrt{\delta_1z}}\right)\\
&&\qquad-\int_0^\infty \exp{\left(\frac{-2\delta_1x}{\sigma^2}\right)}\phi\left(\frac{-x\delta_1-(\nu_1+2\delta_1)z}{\sigma\sqrt{\delta_1z}}\right)\frac{\sigma^2\left(\sigma^2+2\delta_1x\right)}{4\delta_1^2}\frac{\delta_1}{\sigma\sqrt{\delta_1z}}dx.
\end{eqnarray*}
And also applying 
$$
\exp\left(\frac{-2\delta_1x-2(\nu_1-\delta_1) z}{\sigma^2}\right)\phi\left(\frac{-x\delta_1-(\nu_1-2\delta_1) z}{\sigma\sqrt{\delta_1z}}\right)
=
\phi\left(\frac{x\delta_1+\nu_1z}{\sigma\sqrt{\delta_1z}}\right),\qquad x,z>0,
$$
for $z>0$ we have,
\begin{eqnarray*}
&&\int_0^\infty xg(x,z)dx
=
\frac{4(\nu_1-\delta_1)(2\delta_1-\nu_1)}{\sigma^4}\exp\left(\frac{-2(\nu_1-\delta_1)z}{\sigma^2}\right)\frac{\sigma^4}{4\delta_1^2}\Phi\left(\frac{-(\nu_1-2\delta_1)z}{\sigma\sqrt{\delta_1z}}\right)\nonumber\\
&&\qquad-\frac{4(\nu_1-\delta_1)(2\delta_1-\nu_1)}{\sigma^4}\int_0^\infty \phi\left(\frac{x\delta_1+\nu_1 z}{\sigma\sqrt{\delta_1z}}\right)\left(\frac{\sigma^2\left(\sigma^2+2\delta_1x\right)}{4\delta_1^2}\right)\frac{\delta_1}{\sigma\sqrt{\delta_1z}}dx\\
&&\qquad+\int_0^\infty \phi\left(\frac{x\delta_1+\nu_1 z}{\sigma\sqrt{\delta_1 z}}\right)\frac{2(\nu_1-\delta_1)\sqrt{\delta_1}(x+2z)x}{\sigma^3\sqrt{z^3}}dx.
\end{eqnarray*}
In the second term, we factor out $\sigma^2/(4\delta_1^2z)$ and distribute $2\delta_1-\nu_1$.
In the third term, we factor out $(\nu_1-\delta_1)/(\sigma^2\delta_1^2z)$.
Then we consolidating the two integrands to create one integral. With this,  we find that for $z>0$
\begin{eqnarray*}
&&\int_0^\infty xg(x,z)dx
=\frac{(\nu_1-\delta_1)(2\delta_1-\nu_1)}{\delta_1^2}\exp\left(\frac{-2(\nu_1-\delta_1)z}{\sigma^2}\right)\Phi\left(\frac{-(\nu_1-2\delta_1)z}{\sigma\sqrt{\delta_1z}}\right)\\
&&\qquad+\frac{(\nu_1-\delta_1)}{\sigma^2\delta_1^2z}\int_0^\infty\phi\left(\frac{x\delta_1+\nu_1z}{\sigma\sqrt{\delta_1z}}\right)\Big(2\delta_1^2x^2+2\nu_1\delta_1zx-(2\delta_1-\nu_1)\sigma^2z\Big)\frac{\delta_1}{\sigma\sqrt{\delta_1z}}dx.
\end{eqnarray*}
Using the change of variable $u=\frac{x\delta_1+\nu_1z}{\sigma\sqrt{\delta_1z}}$, we obtain
$$
2\delta_1^2x^2+2\nu_1\delta_1zx-(2\delta_1-\nu_1)\sigma^2z=2\sigma^2\delta_1 u^2z
-2\sigma\nu_1z\sqrt{\delta_1z}u-\sigma^2(2\delta_1-\nu_1)z,
$$
and so for $z>0$,
\begin{eqnarray*}
&&\int_0^\infty xg(x,z)dx=\frac{(\nu_1-\delta_1)(2\delta_1-\nu_1)}{\delta_1^2}\exp\left(\frac{-2(\nu_1-\delta_1)z}{\sigma^2}\right)\Phi\left(\frac{-(\nu_1-2\delta_1)z}{\sigma\sqrt{\delta_1z}}\right)\\
&&\qquad+\frac{(\nu_1-\delta_1)}{\sigma\delta_1^2}\int_{\frac{\nu_1\sqrt{z}}{\sigma\sqrt{\delta_1}}}^\infty\phi(u)\Big(2\sigma\delta_1u^2-2\nu_1\sqrt{\delta_1z}u-(2\delta_1-\nu_1)\sigma\Big)du.
\end{eqnarray*}
Using the identities
$$\int_a^\infty\phi(u)u^2du=a\phi(a)+\int_a^\infty\phi(u)du\qquad\hbox{and}\qquad \int_a^\infty\phi(u)udu=\phi(a),\qquad a>0,$$
we have
\begin{eqnarray*}
&&2\sigma\delta_1\int_{\frac{\nu_1\sqrt{z}}{\sigma\sqrt{\delta_1}}}^\infty\phi(u)u^2 du -2\nu_1\sqrt{\delta_1z}\int_{\frac{\nu_1\sqrt{z}}{\sigma\sqrt{\delta_1}}}^\infty \phi(u) u du-(2\delta_1-\nu_1)\sigma\int_{\frac{\nu_1\sqrt{z}}{\sigma\sqrt{\delta_1}}}^\infty\phi(u) du \\
&&\quad=\left(\frac{2\sigma\delta_1\nu_1\sqrt{z}}{\sigma\sqrt{\delta_1}}
-2\nu_1\sqrt{\delta_1z}\right)\phi\left(\frac{\nu_1\sqrt{z}}
{\sigma\sqrt{\delta_1}}\right)+\nu_1\sigma\int_{\frac{\nu_1\sqrt{z}}{\sigma\sqrt{\delta_1}}}^\infty\phi(u) du=\nu_1\sigma\int_{\frac{\nu_1\sqrt{z}}{\sigma\sqrt{\delta_1}}}^\infty\phi(u) du.
\end{eqnarray*}
Thus, for $z>0$,
\begin{eqnarray}
\int_0^\infty xg(x,z)dx&=&\frac{(\nu_1-\delta_1)(2\delta_1-\nu_1)}{\delta_1^2}\exp\left(\frac{-2(\nu_1-\delta_1)z}{\sigma^2}\right)\Phi\left(\frac{-(\nu_1-2\delta_1)z}{\sigma\sqrt{\delta_1z}}\right)\nonumber\\
&&\qquad+\frac{(\nu_1-\delta_1)\nu_1}{\delta_1^2}\int_{\frac{\nu_1\sqrt{z}}{\sigma\sqrt{\delta_1}}}^\infty\phi(u)du.\label{eq:int1}
\end{eqnarray}

Using \eqref{eq:int1} we calculate
\begin{eqnarray*}
&&\int_0^\infty z\int_0^\infty xg(x,z)dxdz\\
&&\quad=\frac{(\nu_1-\delta_1)(2\delta_1-\nu_1)}{\delta_1^2}\int_0^\infty z\exp\left(\frac{-2(\nu_1-\delta_1)z}{\sigma^2}\right)\Phi\left(\frac{-(\nu_1-2\delta_1)z}{\sigma\sqrt{\delta_1z}}\right)dz\\
&&\qquad+\frac{(\nu_1-\delta_1)\nu_1}{\delta_1^2}\int_0^\infty\int_{\frac{\nu_1\sqrt{z}}{\sigma\sqrt{\delta_1}}}^\infty z\phi(u)dudz.
\end{eqnarray*}
The integral in the second term on the right hand side of the above equation can be calculated as follows:
\begin{equation}
\label{AEq1}
\int_0^\infty\int_{\frac{\nu_1\sqrt{z}}{\sigma\sqrt{\delta_1}}}^\infty z\phi(u)dudz=\int_0^\infty\int_0^{\frac{\sigma^2\delta_1u^2}{\nu_1^2}}z\phi(u)dzdu=\frac{\sigma^4\delta_1^2}{2\nu_1^4}\int_0^\infty u^4\phi(u)du=\frac{3\sigma^4\delta_1^2}{4\nu_1^4}.
\end{equation}
For the integral in the first term, we observe a general fact that, for  $a>0$ and $b\in\R$, as a result of integration by parts with $u=\Phi(b\sqrt{u})$ and $v=-(1/a+u)e^{-au}/a$ and recognizing the resulting integrals in terms of the gamma function $\Gamma[\cdot]$, we have
\begin{eqnarray}
&&\int_0^\infty ue^{-au}\Phi(b\sqrt u)du=\frac{1}{2a^2}+\frac{b}{2a}\int_0^\infty \left(\frac{1}{a\sqrt{u}}+\sqrt{u}\right)e^{-au}\phi(b\sqrt{u})du\nonumber\\
&&\quad=\frac{1}{2a^2}+\frac{b}{2a^2\sqrt{2\pi}}\int_0^\infty u^{-1/2}e^{-(a+b^2/2)u}du+\frac{b}{2a\sqrt{2\pi}}\int_0^\infty u^{1/2}e^{-(a+b^2/2)u}du\nonumber\\
&&\quad=\frac{1}{2a^2}+\frac{b}{2a^2\sqrt{2\pi}\sqrt{a+b^2/2}}\Gamma[1/2]+\frac{b}{2a\sqrt{2\pi}\sqrt{(a+b^2/2)^3}}\Gamma[3/2]\nonumber\\
&&\quad=\frac{1}{2a^2}+\frac{b}{2a^2\sqrt{2a+b^2}}+\frac{b}{2a\sqrt{(2a+b^2)^3}}.\nonumber
\end{eqnarray}
Using this with $a=\frac{2(\nu_1-\delta_1)}{\sigma^2}$ and $b=-\frac{\nu_1-2\delta_1}{\sigma\sqrt{\delta_1}}$,
thus $2a+b^2=\frac{\nu_1^2}{\sigma^2\delta_1}$, together with \eqref{AEq1}, we finally obtain
\begin{eqnarray*}
&&\int_0^\infty z\int_0^\infty xg(x,z)dxdz\\
&&\quad=\frac{(\nu_1-\delta_1)(2\delta_1-\nu_1)}{\delta_1^2}\Bigg(\frac{\sigma^4}{8(\nu_1-\delta_1)^2}-\frac{\sigma^4(\nu_1-2\delta_1)}{8(\nu_1-\delta_1)^2\nu_1}-\frac{\sigma^4(\nu_1-2\delta_1)\delta_1}{4(\nu_1-\delta_1)\nu_1^3}\Bigg)\\
&&\qquad+\frac{(\nu_1-\delta_1)\nu_1}{\delta_1^2}\frac{3\sigma^4\delta_1^2}{4\nu_1^4}=\frac{\left(\nu_1^2-\nu_2^2\right)\sigma^4}{4\nu_1^3\nu_2},
\end{eqnarray*}
which is \eqref{eq:cov3} as desired.

\subsubsection{Joint Density Function Computation}\label{sec:pdfs}

Here we prove Lemma \ref{lem:pdfg}.  For this, we start with the joint cumulative distribution function of $M_*^{\nu_1}$ and $M_*^{\nu_2}$ as given in Theorem \ref{thm:ZStat_2d}.  We take the mixed partial derivatives to find the joint probability density function.  Then we perform a change of variables to find $g$, the joint probability density function of $M_*^{\nu_1}$ and $M_*^{\nu_2}-M_*^{\nu_1}$.

In the present notation, the result in Theorem \ref{thm:ZStat_2d} is
\begin{eqnarray*}
\P\left(M_*^{\nu_1}\le x_1,\ M_*^{\nu_2}\le x_2\right)=
\P\left(M_{\tau_1}^{\nu_1}\le x_1\right)-\exp\left(\frac{-2\nu_2x_2}{\sigma^2}\right)\P\left(M_{\tau_1}^{\zeta_1}\le x_1\right),
\end{eqnarray*}
where $\zeta_1=\nu_1-2\nu_2$ and $0\leq x_1<x_2<\infty$.
Using \eqref{eq:CDFMax} in this expression, we find
\begin{eqnarray*}
&&\P\left(M_*^{\nu_1}\le x_1,\ M_*^{\nu_2}\le x_2\right)
=\Phi\left(\frac{x_1+\nu_1\tau_1}{\sigma\sqrt{\tau_1}}\right)-\exp\left(\frac{-2\nu_1x_1}{\sigma^2}\right)\Phi\left(\frac{-x_1+\nu_1\tau_1}{\sigma\sqrt{\tau_1}}\right)\\
&&-\exp\left(\frac{-2\nu_2x_2}{\sigma^2}\right)\left(\Phi\left(\frac{x_1+\zeta_1\tau_1}{\sigma\sqrt{\tau_1}}\right)-\exp\left(\frac{-2\zeta_1x_1}{\sigma^2}\right)\Phi\left(\frac{-x_1+\zeta_1\tau_1}{\sigma\sqrt{\tau_1}}\right)\right),
\end{eqnarray*}
for $0\leq x_1<x_2<\infty$.

Next we calculate the joint density function of $(M_*^{\nu_1},M_*^{\nu_2})$, denoted by $h$, by taking partial derivatives with repect to $x_1$ and $x_2$, i.e., $h(x_1,x_2)=\frac{\partial}{\partial x_2}\frac{\partial}{\partial x_1}\P\left(M_*^{\nu_1}\le x_1,\ M_*^{\nu_2}\le x_2\right)$ for $0\leq x_1<x_2<\infty$. First, for $0\leq x_1<x_2<\infty$,
\begin{eqnarray*}
&&\frac{\partial}{\partial x_1}\P\left(M_*^{\nu_1}\le x_1,\ M_*^{\nu_2}\le x_2\right)\\
&&\quad=\phi\left(\frac{x_1+\nu_1\tau_1}{\sigma\sqrt{\tau_1}}\right)\frac{\partial}{\partial x_1}\frac{x_1+\nu_1\tau_1}{\sigma\sqrt{\tau_1}}
-\exp\left(\frac{-2\nu_1x_1}{\sigma^2}\right)\phi\left(\frac{-x_1+\nu_1\tau_1}{\sigma\sqrt{\tau_1}}\right)\frac{\partial}{\partial x_1}\frac{-x_1+\nu_1\tau_1}{\sigma\sqrt{\tau_1}}\\
&&\qquad +\frac{2\nu_1}{\sigma^2}\exp\left(\frac{-2\nu_1x_1}{\sigma^2}\right)\Phi\left(\frac{-x_1+\nu_1\tau_1}{\sigma\sqrt{\tau_1}}\right)\\
&&\qquad-\exp\left(\frac{-2\nu_2x_2}{\sigma^2}\right)\left(\phi\left(\frac{x_1+\zeta_1\tau_1}{\sigma\sqrt{\tau_1}}\right)\frac{\partial}{\partial x_1}\frac{x_1+\zeta_1\tau_1}{\sigma\sqrt{\tau_1}}\right.\\
&&\qquad\qquad\left.-\exp\left(\frac{-2\zeta_1x_1}{\sigma^2}\right)\phi\left(\frac{-x_1+\zeta_1\tau_1}{\sigma\sqrt{\tau_1}}\right)\frac{\partial}{\partial x_1}\frac{-x_1+\zeta_1\tau_1}{\sigma\sqrt{\tau_1}}\right)\\
&&\qquad-\exp\left(\frac{-2\nu_2x_2}{\sigma^2}\right)\frac{2\zeta_1}{\sigma^2}\exp\left(\frac{-2\zeta_1x_1}{\sigma^2}\right)\Phi\left(\frac{-x_1+\zeta_1\tau_1}{\sigma\sqrt{\tau_1}}\right).
\end{eqnarray*}
For any $\nu\in\R$ and $x\in\R$, we have
\begin{eqnarray*}
\exp\left(\frac{-2\nu x}{\sigma^2}\right)\phi\left(\frac{-x+\nu\tau_1}{\sigma\sqrt{\tau_1}}\right)&=&\exp\left(\frac{-2\nu x}{\sigma^2}\right)\phi\left(\frac{x-\nu\tau_1}{\sigma\sqrt{\tau_1}}\right)=\phi\left(\frac{x+\nu\tau_1}{\sigma\sqrt{\tau_1}}\right).
\end{eqnarray*}
Using the above and $x_1+\zeta_1\tau_1=x_2-\nu_2\tau_1$, we find that for $0\leq x_1<x_2<\infty$,
\begin{eqnarray*}
&&\frac{\partial}{\partial x_1}\P\left(M_*^{\nu_1}\le x_1,\ M_*^{\nu_2}\le x_2\right)\\
&&\quad=\phi\left(\frac{x_1+\nu_1\tau_1}{\sigma\sqrt{\tau_1}}\right)\left(\frac{\partial}{\partial x_1}\frac{x_1+\nu_1\tau_1}{\sigma\sqrt{\tau_1}}
-\frac{\partial}{\partial x_1}\frac{-x_1+\nu_1\tau_1}{\sigma\sqrt{\tau_1}}\right)\\
&&\qquad +\frac{2\nu_1}{\sigma^2}\exp\left(\frac{-2\nu_1x_1}{\sigma^2}\right)\Phi\left(\frac{-x_1+\nu_1\tau_1}{\sigma\sqrt{\tau_1}}\right)\\
&&\qquad-\exp\left(\frac{-2\nu_2x_2}{\sigma^2}\right)\phi\left(\frac{x_1+\zeta_1\tau_1}{\sigma\sqrt{\tau_1}}\right)\left(\frac{\partial}{\partial x_1}\frac{x_1+\zeta_1\tau_1}{\sigma\sqrt{\tau_1}}-\frac{\partial}{\partial x_1}\frac{-x_1+\zeta_1\tau_1}{\sigma\sqrt{\tau_1}}\right)\\
&&\qquad-\exp\left(\frac{-2\nu_2x_2}{\sigma^2}\right)\frac{2\zeta_1}{\sigma^2}\exp\left(\frac{-2\zeta_1x_1}{\sigma^2}\right)\Phi\left(\frac{-x_1+\zeta_1\tau_1}{\sigma\sqrt{\tau_1}}\right)\\
&&\quad=\phi\left(\frac{x_1+\nu_1\tau_1}{\sigma\sqrt{\tau_1}}\right)\frac{\partial}{\partial x_1}\frac{2x_1}{\sigma\sqrt{\tau_1}}+\frac{2\nu_1}{\sigma^2}\exp\left(\frac{-2\nu_1x_1}{\sigma^2}\right)\Phi\left(\frac{-x_1+\nu_1\tau_1}{\sigma\sqrt{\tau_1}}\right)\\
&&\qquad-\exp\left(\frac{-2\nu_2x_2}{\sigma^2}\right)\phi\left(\frac{x_2-\nu_2\tau_1}{\sigma\sqrt{\tau_1}}\right)\frac{\partial}{\partial x_1}\frac{2x_1}{\sigma\sqrt{\tau_1}}\\
&&\qquad-\exp\left(\frac{-2\nu_2x_2}{\sigma^2}\right)\frac{2\zeta_1}{\sigma^2}\exp\left(\frac{-2\zeta_1x_1}{\sigma^2}\right)\Phi\left(\frac{-x_1+\zeta_1\tau_1}{\sigma\sqrt{\tau_1}}\right)\\
&&\quad=\phi\left(\frac{x_1+\nu_1\tau_1}{\sigma\sqrt{\tau_1}}\right)\frac{\partial}{\partial x_1}\frac{2x_1}{\sigma\sqrt{\tau_1}}+\frac{2\nu_1}{\sigma^2}\exp\left(\frac{-2\nu_1x_1}{\sigma^2}\right)\Phi\left(\frac{-x_1+\nu_1\tau_1}{\sigma\sqrt{\tau_1}}\right)\\
&&\qquad-\phi\left(\frac{x_2+\nu_2\tau_1}{\sigma\sqrt{\tau_1}}\right)\frac{\partial}{\partial x_1}\frac{2x_1}{\sigma\sqrt{\tau_1}}\\
&&\qquad-\exp\left(\frac{-2\nu_2x_2}{\sigma^2}\right)\frac{2\zeta_1}{\sigma^2}\exp\left(\frac{-2\zeta_1x_1}{\sigma^2}\right)\Phi\left(\frac{-x_1+\zeta_1\tau_1}{\sigma\sqrt{\tau_1}}\right).
\end{eqnarray*}
Finally, using $x_2+\nu_2\tau_1=x_1+\nu_1\tau_1$ we obtain that for $0\leq x_1<x_2<\infty$
\begin{eqnarray*}
&&\frac{\partial}{\partial x_1}\P\left(M_*^{\nu_1}\le x_1,\ M_*^{\nu_2}\le x_2\right)=\frac{2\nu_1}{\sigma^2}\exp\left(\frac{-2\nu_1x_1}{\sigma^2}\right)\Phi\left(\frac{-x_1+\nu_1\tau_1}{\sigma\sqrt{\tau_1}}\right)\\
&&\qquad-\exp\left(\frac{-2\nu_2x_2}{\sigma^2}\right)\frac{2\zeta_1}{\sigma^2}\exp\left(\frac{-2\zeta_1x_1}{\sigma^2}\right)\Phi\left(\frac{-x_1+\zeta_1\tau_1}{\sigma\sqrt{\tau_1}}\right).
\end{eqnarray*}

Next, by taking the second partial derivative with respect to $x_2$, we obtain the joint density function $h$.  For $0\leq x_1<x_2<\infty$,
\begin{eqnarray}
&&h(x_1,x_2)=\frac{\partial}{\partial x_2}\frac{\partial}{\partial x_1}\P\left(M_*^{\nu_1}\le x_1,\ M_*^{\nu_2}\le x_2\right)\notag\\
&&\quad=\frac{2\nu_1}{\sigma^2}\exp\left(\frac{-2\nu_1x_1}{\sigma^2}\right)\phi\left(\frac{-x_1+\nu_1\tau_1}{\sigma\sqrt{\tau_1}}\right)\frac{\partial}{\partial x_2}\frac{-x_1+\nu_1\tau_1}{\sigma\sqrt{\tau_1}}\notag\\
&&\qquad-\exp\left(\frac{-2\nu_2x_2}{\sigma^2}\right)\frac{2\zeta_1}{\sigma^2}\exp\left(\frac{-2\zeta_1x_1}{\sigma^2}\right)\phi\left(\frac{-x_1+\zeta_1\tau_1}{\sigma\sqrt{\tau_1}}\right)\frac{\partial}{\partial x_2}\frac{-x_1+\zeta_1\tau_1}{\sigma\sqrt{\tau_1}}\notag\\
&&\qquad+\frac{2\nu_2}{\sigma^2}\exp\left(\frac{-2\nu_2x_2}{\sigma^2}\right)\frac{2\zeta_1}{\sigma^2}\exp\left(\frac{-2\zeta_1x_1}{\sigma^2}\right)\Phi\left(\frac{-x_1+\zeta_1\tau_1}{\sigma\sqrt{\tau_1}}\right)\notag\\
&&\quad=\phi\left(\frac{x_1+\nu_1\tau_1}{\sigma\sqrt{\tau_1}}\right)\left(\frac{2}{\sigma^2}\frac{\partial}{\partial x_2}\left[\frac{\nu_1(-x_1+\nu_1\tau_1)}{\sigma\sqrt{\tau_1}}-\frac{\zeta_1(-x_1+\zeta_1\tau_1)}{\sigma\sqrt{\tau_1}}\right]\right)\notag\\
&&\qquad+\frac{2\nu_2}{\sigma^2}\exp\left(\frac{-2\nu_2x_2}{\sigma^2}\right)\frac{2\zeta_1}{\sigma^2}\exp\left(\frac{-2\zeta_1x_1}{\sigma^2}\right)\Phi\left(\frac{-x_1+\zeta_1\tau_1}{\sigma\sqrt{\tau_1}}\right)\notag\\
&&\quad=\phi\left(\frac{x_1+\nu_1\tau_1}{\sigma\sqrt{\tau_1}}\right)\frac{4\nu_2}{\sigma^2}\frac{\partial}{\partial x_2}\left[\frac{2x_2-3x_1}{\sigma\sqrt{\tau_1}}
\right]\notag\\
&&\qquad+\frac{2\nu_2}{\sigma^2}\exp\left(\frac{-2\nu_2x_2}{\sigma^2}\right)\frac{2\zeta_1}{\sigma^2}\exp\left(\frac{-2\zeta_1x_1}{\sigma^2}\right)\Phi\left(\frac{-x_1+\zeta_1\tau_1}{\sigma\sqrt{\tau_1}}\right)\notag\\
&&\quad=\phi\left(\frac{x_1+\nu_1\tau_1}{\sigma\sqrt{\tau_1}}\right)\frac{2\nu_2}{\sigma^2}\left(\frac{\sqrt{\nu_1-\nu_2}(2x_2- x_1)}{\sigma\sqrt{(x_2-x_1)^3}}
\right)\label{h}\\
&&\qquad+\frac{2\nu_2}{\sigma^2}\exp\left(\frac{-2\nu_2x_2}{\sigma^2}\right)\frac{2\zeta_1}{\sigma^2}\exp\left(\frac{-2\zeta_1x_1}{\sigma^2}\right)\Phi\left(\frac{-x_1+\zeta_1\tau_1}{\sigma\sqrt{\tau_1}}\right). \notag
\end{eqnarray}

Finally, to determine $g$, we apply the change of variables $x=x_1$, $z=x_2-x_1$ and $\delta_1=\nu_1-\nu_2$ to \eqref{h}.  Since the relevant Jacobian determinant is one, we have $dx_1dx_2=dx dz$ and then $g(x,z)=h(x,x+z)$, which gives \eqref{eq:pdfg}.

\subsection{Proof of Corllary  \ref{cor:tildeZstar}}
\label{s:eqQ2}
We first state the following identity. Recall $p>1$. With the change of variable $t=\frac{1}{s^p+1}$,
we have  $s=t^{-1/p}(1-t)^{1/p}$ and $ds=-p^{-1}t^{-1-1/p}(1-t)^{-1+1/p}dt$, we have
\begin{eqnarray*}
\int_0^\infty\frac{s^{p-2}}{s^p+1}ds
&=&\frac1p\int_0^1t^{\frac1p-1}(1-t)^{-\frac1p}ds=\frac1p\B\left(\frac1p,1-\frac1p\right)=\frac{\pi/p}{\sin(\pi/p)},
\end{eqnarray*}
where $\B$ is the Beta function, i.e., $\B(x,y)=\int_0^1 t^{x-1} (1-t)^{y-1} dt$ for $x,y>0$.
 Then, with \eqref{eq:M*(a)cdf}, $\mu(a)=\tilde\mu(a)=\kappa+\tilde\lambda a^{-p}$ for $a>0$ and the change of variable $a=(\tilde\lambda/\kappa)^{1/p}s$, we have
\begin{eqnarray*}
\E[\widetilde Z_*]&=&\int_0^\infty \frac{\E[\widetilde M_*(a)]}{a^2}da=\frac{\tilde\sigma^2}{2}\int_0^\infty\frac{1}{\tilde\mu(a)a^2}da=\frac{\tilde\sigma^2}{2}\int_0^\infty\frac{a^{p-2}}{\kappa a^p+\tilde\lambda}da\\
&=&\frac{\tilde\sigma^2}{2\kappa}\left(\frac{\kappa}{\tilde\lambda}\right)^{1/p}\int_0^\infty\frac{s^{p-2}}{s^p+1}ds=\frac{\tilde\sigma^2}{2\kappa}\left(\frac{\kappa}{\tilde\lambda}\right)^{1/p} \frac{\pi/p}{\sin(\pi/p)},
\end{eqnarray*}
which proves equation \eqref{eq:ExptildeZstar}.

To prove \eqref{eq:VartildeZstar}, we observe that
\begin{eqnarray*}
\E[\widetilde Z_*^2]&=&\int_0^\infty\int_0^\infty\frac{\E[\widetilde M_*(a)\widetilde M_*(b)]}{a^2b^2}dadb\\
\left(\E[\widetilde Z_*]\right)^2&=&\int_0^\infty\int_0^\infty\frac{\E[\widetilde M_*(a)]\E[\widetilde M_*(b)]}{a^2b^2}dadb.
\end{eqnarray*}
Thus,
\begin{eqnarray*}
\text{Var}(\widetilde Z_*)&=&\int_0^\infty\int_0^\infty\frac{\text{Cov}(\widetilde M_*(a),\widetilde M_*(b))}{a^2b^2}dadb
=2\int_0^\infty\int_a^\infty\frac{\text{Cov}(\widetilde M_*(a),\widetilde M_*(b))}{a^2b^2}dbda.
\end{eqnarray*}
By Corollary \ref{cor:MCov}, we have
\begin{eqnarray*}
\text{Var}(\widetilde Z_*)
&=&\frac{\tilde\sigma^4}{2}\int_0^\infty\int_a^\infty\frac{2\tilde\mu(a)-\tilde\mu(b)}{\tilde\mu(a)^3a^2b^2}dbda.
\end{eqnarray*}
Using $\tilde\mu(a)=\kappa+\tilde\lambda a^{-p}$ for $a>0$, we have
\begin{eqnarray*}
\text{Var}(\widetilde Z_*)
&=&\frac{\tilde\sigma^4}{2}\int_0^\infty\frac{1}{\tilde\mu(a)^3a^2}\int_a^\infty\left(\frac{2\tilde\mu(a)-\kappa}{b^2}-\frac{\tilde\lambda}{b^{p+2}}\right)dbda\\
&=&\frac{\tilde\sigma^4}{2}\int_0^\infty\frac{1}{\tilde\mu(a)^3a^2}\left(\frac{2\tilde\mu(a)-\kappa}{a}-\frac{\tilde\lambda}{(p+1)a^{p+1}}\right)da\\
&=&\frac{\tilde\sigma^4}{2}\int_0^\infty\frac{2\tilde\mu(a)-\kappa-\frac{\tilde\lambda}{(p+1)}a^{-p}}{\tilde\mu(a)^3a^3}da
=\frac{\tilde\sigma^4}{2}\int_0^\infty\frac{\kappa+2\tilde\lambda a^{-p}+\frac{\tilde\lambda}{p+1} a^{-p}}{(\kappa+\tilde\lambda a^{-p})^3a^3}da\\
&=&\frac{\tilde\sigma^4}{2}\int_0^\infty\frac{\kappa+\frac{\tilde\lambda(2p+3)}{p+1} a^{-p}}{(\kappa a^p+\tilde\lambda)^3a^{3-3p}}da
=\frac{\tilde\sigma^4}{2}\int_0^\infty\frac{\kappa a^{3p-3}+\frac{\tilde\lambda(2p+3)}{p+1} a^{2p-3}}{(\kappa+\tilde\lambda a^{-p})^3}da.
\end{eqnarray*}
Letting $c=\frac{2p+3}{p+1}$ and with the change of variable $a=(\tilde\lambda/\kappa)^{\frac1p}s$, we have
$$\int_0^\infty\frac{\kappa a^{3p-3}+\tilde\lambda ca^{2p-3}}{(\kappa a^p+\tilde\lambda)^3}da=\frac{1}{\tilde\lambda^3}\int_0^\infty\frac{As^{3p-3}+Bs^{2p-3}}{(s^p+1)^3}ds$$
where $A=\kappa (\tilde\lambda/\kappa)^{\frac{3p-2}{p}}$ and $B=\tilde\lambda c (\tilde\lambda/\kappa)^{\frac{2p-2}{p}}$.
Similarly, we use the change of variable $t=\frac{1}{s^p+1}$ and so
$s=t^{-1/p}(1-t)^{1/p}$ and $ds=-p^{-1}t^{-1-1/p}(1-t)^{-1+1/p}dt$,
\begin{eqnarray*}
\frac{1}{\tilde\lambda^3}\int_0^\infty\frac{As^{3p-3}+Bs^{2p-3}}{(s^p+1)^3}ds
&=&\frac{1}{p\tilde\lambda^3}\int_0^1\left(At^{-1+\frac2p}(1-t)^{2-\frac2p}+Bt^{\frac2p}(1-t)^{1-\frac2p}\right)dt\\
&=&\frac{A}{p\tilde\lambda^3}\B\left(\frac2p,3-\frac2p\right)+\frac{B}{p\tilde\lambda^3}\B\left(1+\frac2p,2-\frac2p\right).
\end{eqnarray*}

Combining with \eqref{eq:ExptildeZstar} and some identities of the beta function, we finally obtain 
\begin{eqnarray*}
\text{Var}(\widetilde Z_*)
&=&\frac{\tilde\sigma^4}{4\kappa^2}\left(\frac{\kappa}{\tilde\lambda}\right)^{2/p}\frac{2}{p}\left(\B\left(\frac2p,3-\frac2p\right)+\frac{2p+3}{p+1}\B\left(1+\frac2p,2-\frac2p\right)\right)\\
&=&\frac{\tilde\sigma^4}{4\kappa^2}\left(\frac{\kappa}{\tilde\lambda}\right)^{2/p}\frac{2}{p}\left(\frac{p-1}{p}\B\left(\frac2p,2-\frac2p\right)+\frac{2p+3}{p(p+1)}\B\left(\frac2p,2-\frac2p\right)\right).
\end{eqnarray*}
If $p=2$, then $\B\left(1,1\right)=1$ and $\text{Var}(\widetilde Z_*)=\frac{\tilde\sigma^4}{4\kappa^2}\left(\frac{\kappa}{\tilde\lambda}\right)\frac{5}{3}=\frac{5\tilde\sigma^4}{12\kappa\tilde\lambda}$, as desired.  Otherwise, $p\neq 2$ and we obtain \eqref{eq:VartildeZstar} in this case as follows:
\begin{eqnarray*}
\text{Var}(\widetilde Z_*)
&=&\frac{\tilde\sigma^4}{4\kappa^2}\left(\frac{\kappa}{\tilde\lambda}\right)^{2/p}\frac{2}{p}\left(\frac{p-1}{p}+\frac{2p+3}{p(p+1)}\right)\left(\frac{p-2}{p}\right)\B\left(\frac2p,1-\frac2p\right)\\
&=&
\frac{\tilde\sigma^4}{4\kappa^2}
\left(\frac{\kappa}{\tilde\lambda}\right)^{2/p}\left(\frac{2(p^2+2p+2)(p-2)}{p^3(p+1)}\right)\frac{\pi}{\sin(2\pi/p)}.
\end{eqnarray*}


\begin{thebibliography}{99}

\bibitem{Atar}
Atar, R., Biswas, A., Kaspi, H., Ramaman, K.:
A Skorokhod map on measure-valued paths with applications to priority queues. Annals of Applied Probability 28, 418-481 (2018). \url{https://doi.org/10.1214/17-AAP1309}

\bibitem{heavy tails}
Banerjee, S., Budhiraja, A., Puha, A.L.: Heavy traffic scaling limits for shortest remaining processing time queues with heavy tailed processing time distributions. Annals of Applied Probability 32, 2587-2651 (2022). \url{https://doi.org/10.1214/21-AAP1741}


\bibitem{Bingham}
Bingham, N., Goldie, C., Teugels, J.: Regular Variation. Cambridge University Press, Cambridge (1987). \url{https://doi.org/10.1017/CBO9780511721434}

\bibitem{Chen}
Chen, Y., Dong, J.: Scheduling with Service-Time Information: The Power of Two Priority Classes. Preprint (2021). \url{https://arxiv.org/abs/2105.10499}

\bibitem{Dong}
Dong, J., Ibrahim, R.: On the SRPT scheduling discipline in many-server queues with impatient customers. Management Science 67, 7291-7950 (2021). \url{https://doi.org/10.1287/mnsc.2021.4110}

\bibitem{Down}
Down, D., Gromoll, H.C., Puha, A.L.,: Fluid limits for shortest remaining processing time queues. Mathematics of Operations Research 34, 880-911 (2009). \url{https://doi.org/10.1287/moor.1090.0409}

\bibitem{Down_Sig}
Down, D., Gromoll, H.C., Puha, A.L.:
State-dependent response times via fluid limits for shortest remaining processing time queues. San Diego ACM-Sigmetrics Performance Evaluation 27, 75-76 (2009). \url{https://doi.org/10.1145/1639562.1639593}



\bibitem{Gromoll}
Gromoll, H.C., Kruk, L., Puha, A.L.: Diffusion limits for shortest remaining processing time queues. Stochastic Systems 1, 1-16 (2011). \url{https://doi.org/10.1214/10-SSY016}


\bibitem{Harrison}
Harrison, J.M.:
Brownian motion and stochastic flow systems.
Wiley, New York (1985).

\bibitem{Iglehart and Whitt}
Iglehart, D.L., Whitt, W.:
Multiple channel queues in heavy traffic.
Adv. Appl. Probab. 2, 150–177 (1970).
\url{https://doi.org/10.2307/3518347}



\bibitem{light tails}
Ji, C,.  Puha, A.L.:
Heavy traffic scaling limits for shortest remaining processing time queues with light tailed processing time distribution
Queueing Systems 109:2, (2025) \url{https://doi.org/10.1007/s11134-024-09929-8}

\bibitem{KrukSoko}
Kruk, L., Sokolowska, E.:
Fluid limits for multiple-input shortest remaining processing time queues. Mathematics of Operations Research 41, 1055-1092 (2016). \url{https://doi.org/10.1287/moor.2015.0768}

\bibitem{Lin}
Lin, M., Wierman, A., Zwart, B.:
Heavy-traffic analysis of mean response time under shortest remaining processing time. Performance Evaluation 68, 955-966 (2011). \url{https://doi.org/10.1016/j.peva.2011.06.001}



\bibitem{Prokhorov}
Prokhorov, Y.V.: Convergence of random processes and limit theorems in probability theory. Theory of Probability \& Its Applications 1, 157-214 (1956). \url{https://doi.org/10.1137/1101016}

\bibitem{Puha}
Puha, A.L.: Diffusion limits for shortest remaining processing time queues under nonstandard spatial scaling. The Annals of Applied Probability 25, 3381–3404 (2015). \url{https://doi.org/10.1214/14-AAP1076}



\bibitem{Old Optimal}
Schrage, L.E.: A proof of the optimality of the shortest remaining processing time discipline. Operations Research 16 687-690 (1968). \url{https://doi.org/10.1287/opre.16.3.687}


\bibitem{New Optimal}
Smith, D.R.: A new proof of the optimality of the shortest remaining processing time discipline. Operations Research 26 197–199 (1978). \url{https://doi.org/10.1287/opre.26.1.197}


\bibitem{Whitt}
Whitt, W.: Stochastic-Processing Limits. Springer, New York (2002).

\end{thebibliography}
\end{document}